\theoremstyle{plain}
\tikzset{join/.code=\tikzset{after node path={%
\ifx\tikzchainprevious\pgfutil@empty\else(\tikzchainprevious)%
edge[every join]#1(\tikzchaincurrent)\fi}}}
\tikzset{>=stealth',every on chain/.append style={join},
         every join/.style={->}}
\tikzset{
    >=stealth',
    punkt/.style={
           rectangle,
           rounded corners,
           draw=black, very thick,
           text width=6.5em,
           minimum height=2em,
           text centered},
    pil/.style={
           ->,
           thick,
           shorten <=2pt,
           shorten >=2pt,}
}
\newcommand{\bee}{\begin{enumerate}}
\newcommand{\eee}{\end{enumerate}}
\newcommand{\benn}{\begin{equation*}}
\newcommand{\eenn}{\end{equation*}}
\newcommand{\be}{\begin{equation}}
\newcommand{\ee}{\end{equation}}
\newcommand{\bean}{\begin{eqnarray}}
\newcommand{\eean}{\end{eqnarray}}
\newcommand{\bea}{\begin{eqnarray*}}
\newcommand{\eea}{\end{eqnarray*}}
\newcommand{\p}{\partial}
\newcommand{\ra}{\rangle}
\newcommand{\N}{\mathbb{N}}
\newcommand{\R}{\mathbb{R}}
\newcommand{\op}[1]{\!\!\mathop{\rm ~#1}\nolimits}
\newcommand{\mbi}{\mathbb{I}}
\mathchardef\za="710B  
\mathchardef\zb="710C  
\mathchardef\zg="710D  
\mathchardef\zd="710E  
\mathchardef\zve="710F 
\mathchardef\zz="7110  
\mathchardef\zh="7111  
\mathchardef\zy="7112 
\mathchardef\zi="7113  
\mathchardef\zk="7114  
\mathchardef\zl="7115  
\mathchardef\zm="7116  
\mathchardef\zn="7117  
\mathchardef\zx="7118  
\mathchardef\zp="7119  
\mathchardef\zr="711A  
\mathchardef\zs="711B  
\mathchardef\zt="711C  
\mathchardef\zu="711D  
\mathchardef\zf="711E 
\mathchardef\zq="711F  
\mathchardef\zc="7120  
\mathchardef\zw="7121  
\mathchardef\ze="7122  
\mathchardef\zvy="7123  
\mathchardef\zvw="7124  
\mathchardef\zvr="7125 
\mathchardef\zvs="7126 
\mathchardef\zvf="7127  
\mathchardef\zG="7000  
\mathchardef\zD="7001  
\mathchardef\zY="7002  
\mathchardef\zL="7003  
\mathchardef\zX="7004  
\mathchardef\zP="7005  
\mathchardef\zS="7006  
\mathchardef\zU="7007  
\mathchardef\zF="7008  
\mathchardef\zW="700A  
\newcommand{\cyclic}{\mathop{\kern0.9ex{{+}
\kern-2.15ex\raise-.25ex\hbox{\Large\hbox{$\circlearrowright$}}}}\limits}
\newcommand{\cE}{{\cal E}}
 \newcommand{\cS}{{\cal S}}
 \newcommand{\cD}{{\cal D}}
 \newcommand{\cO}{{\cal O}}
 \newcommand{\cB}{{\cal B}}
\newtheorem{rem}{Remark}
\newtheorem{theo}{Theorem}
\newtheorem{prop}{Proposition}
\newtheorem{lem}{Lemma}
\newtheorem{cor}{Corollary}
\newtheorem{ex}{Example}
\newtheorem{defi}{Definition}
\newcommand{\h}{\op{Hom}}
\newcommand{\0}{\otimes}
\newcommand{\id}{\op{id}}
\newcommand{\coker}{\op{coker}}
\newcommand{\im}{\op{im}}
\DeclareMathAlphabet{\mathpzc}{OT1}{pzc}{m}{it}
 \newcommand{\cJ}{\mathcal{J}}
 \newcommand{\colim}{\op{colim}}
\begin{document}
\title{\bf Model categorical Koszul-Tate resolution\\ for\\ algebras over differential operators}
\date{}
\author{Gennaro di Brino, Damjan Pi\v{s}talo, and Norbert Poncin\footnote{University of Luxembourg, Mathematics Research Unit, 1359 Luxembourg City, Luxembourg, gennaro.dibrino@gmail.com, damjan.pistalo@uni.lu, norbert.poncin@uni.lu}}

\maketitle

\begin{abstract} Derived $\cD$-Geometry is considered as a convenient language for a coordinate-free investigation of nonlinear partial differential equations (up to symmetries). One of the first issues one meets in the functor of points approach to derived $\cD$-Geometry, is the question of a model structure on the category $\tt C$ of differential non-negatively graded quasi-coherent commutative algebras over the sheaf $\cD$ of differential operators of an appropriate underlying variety. In \cite{BPP1}, we described a cofibrantly generated model structure on $\tt C$ via the definition of its weak equivalences and its fibrations. In the present article -- the second of a series of works on the Batalin-Vilkovisky-formalism -- we characterize the class of cofibrations, give explicit functorial cofibration-fibration factorizations, as well as explicit functorial fibrant and cofibrant replacement functors. We then use the latter to build a model categorical Koszul-Tate resolution for $\cD$-algebraic `on-shell function' algebras.
\end{abstract}

\vspace{2mm} \noindent {\bf MSC 2010}: 18G55, 16E45, 35A27, 32C38, 16S32, 18G10

\noindent{\bf Keywords}: Differential operator, $\cD$-module, commutative monoid, model category, relative Sullivan algebra, derived Geometry, functor of points, {\small PDE}, Koszul-Tate resolution
\thispagestyle{empty}

\section{Introduction}

The study of systems of nonlinear {\small PDE}-s and their symmetries, via the functor of points approach to spaces and varieties, leads to derived $\cD$-stacks, i.e., roughly, locally representable sheaves ${\tt DG_+qcCAlg}({\cal D}_X)\to {\tt SSet}$ valued in the category $\tt SSet$ of simplicial sets and defined on the category ${\tt DG_+ qcCAlg}({\cal D}_X)$ of differential non-negatively graded commutative algebras -- over the sheaf $\cD_X$ of differential operators of a smooth affine scheme $X$ --$\,$, whose terms are quasi-coherent as modules over the function sheaf $\cO_X$ of $X$. The sheaf condition appears a the fibrant object condition of a model structure on the category of the corresponding presheaves. This structure depends on the model structure of the source category, which is equivalent to the category $\tt DG\cD A$ of differential non-negatively graded commutative algebras over the total sections $\cD:=\cD_X(X)=\zG(X,\cD_X)$ of $\cD_X$. In \cite{BPP1}, we defined and studied a finitely generated model structure on $\tt DG\cD A$. In the present paper, we complete its description: we characterize cofibrations as the retracts of the relative Sullivan $\cD$-algebras. Further, we give explicit functorial `{$\,$\small TrivCof -- Fib}' and `{$\,$\small Cof -- TrivFib}' factorizations (as well as the corresponding functorial fibrant and cofibrant replacement functors). The latter are specific to the considered setting and are of course different from those provided, for arbitrary cofibrantly generated model categories, by the small object argument. Eventually, we review the $\cD$-geometric counterpart $\cal R$ of an algebra of on-shell functions and apply our machinery to find a model categorical Koszul-Tate ({\small KT}) resolution of $\cal R$. This resolution is a cofibrant replacement of $\cal R$ in an appropriate coslice category of $\tt DG\cD A$. In contrast with\medskip

\noindent - the classical {\small KT} resolution constructed in coordinates \cite{Bar}, for any regular on-shell irreducible gauge theory (as the Tate extension of the local Koszul resolution of a regular surface), and\smallskip

\noindent - the compatibility complex {\small KT} resolution built in coordinates \cite{Ver}, under regularity and off-shell reducibility conditions (existence of a finite formally exact compatibility complex),\medskip

\noindent the mentioned $\cD$-geometric {\small KT} resolution, obtained from the cofibrant replacement functor of $\tt DG\cD A$, is functorial and exists without the preceding restrictive hypotheses.\medskip

In this series of papers, our final goal is to combine and generalize aspects of Vinogradov's secondary calculus \cite{Vin}, of the homotopical algebraic geometry ({\small HAG}) developed by To\"en and Vezzosi \cite{TV04, TV08}, and the $\cD$-geometry used by Beilinson and Drinfeld \cite{BD}. For Vinogradov, the fundamental category is roughly the homotopy category of the (coslice category under a fixed diffiety or $\cD$-scheme [in particular, under a fixed affine $\cD$-scheme or $\cD$-algebra] of the) category $\tt DG\cD M$ of differential graded $\cD$-modules. In the present paper, we study the homotopy theory of `diffieties' by describing a model structure on $\tt DG\cD A$: we investigate the $\cD$-analog of Rational Homotopy Theory. On the other hand, {\small HAG} deals with the category $\tt DGCA$ of differential graded commutative algebras over a commutative ring. To study partial differential equations, we have to switch to the category of differential graded commutative algebras over the sheaf of noncommutative rings of differential operators of a scheme or variety. Eventually, in comparison with the frame considered by Beilinson and Drinfeld, we aim at dealing not only with $\cD$-schemes, but also with (derived) $\cD$-stacks. We expect this context to be the correct setting for a coordinate-free gauge reduction -- see \cite{PP} and \cite{BPP3} for first results. \medskip

Let us emphasize that the special behavior of the noncommutative ring $\cD$ turns out to be a source of possibilities, as well as of problems. For instance, a differential graded commutative algebra ({\small DGCA}) $A$ over a field or a commutative ring $k$ is a differential graded $k$-module, endowed with a degree zero associative graded-commutative unital $k$-bilinear multiplication, for which the differential is a graded derivation. The extension of this concept to noncommutative rings $R$ is not really considered in the literature. Indeed, the former definition of a {\small DGCA} over $k$ is equivalent to saying that $A$ is a commutative monoid in the category of differential graded $k$-modules. However, for noncommutative rings $R$, the category of differential graded (left) $R$-modules is not symmetric monoidal and the notion of commutative monoid is meaningless. In the case $R=\cD$, we get differential graded (left) $\cD$-modules and these {\it are} symmetric monoidal. But a commutative monoid is not exactly the noncommutative analog of a {\small DGCA} in the preceding sense: the multiplication is only $\cO$-bilinear and, in addition, vector fields act on products as derivations. Further, although we largely avoid sheaves via the confinement to affine schemes -- a necessary restriction, without which no projective model structure would exist on the relevant categories \cite[Ex. III.6.2]{Har} --$\,$, sheaves and quasi-coherence do require a careful approach. Examples of more challenging aspects are the questions of flatness and projectivity of $\cD=\cD_X(X)$ viewed as $\cO=\cO_X(X)$-module, the combination of `finite' and `transfinite' definitions and results, the functorial `{\small TrivCof -- Fib}' and `{\small Cof -- TrivFib}' factorizations...\medskip

Eventually, we hope that the present text and the one of \cite{BPP1} will be considered as self-contained, not only by researchers from different fields, like e.g., homotopical algebra, geometry, mathematical physics, but also by graduate students.\medskip

The paper is organized as follows:
\tableofcontents

\section{Preliminaries}

In the following, we freely use notation, definitions, and the results of \cite{BPP1}. For the convenience of the reader, we nevertheless recall some concepts and propositions in the present section. For explanations on $\cD$-modules, sheaves versus global sections, model categories, small objects, cofibrant generation, as well as on relative Sullivan algebras, we refer the reader to \cite[Appendix]{BPP1}.\medskip

\begin{theo}\label{FinGenModDGDM}
For any unital ring $R$, the category ${\tt Ch}_+(R)$ of non-negatively graded chain complexes of left $R$-modules is a finitely $(\,$and thus a cofibrantly$\,)$ generated model category $(\,$in the sense of \cite{GS} and in the sense of \cite{Hov}$\;)$, with $$I=\{i_k:S_\bullet^{k-1}\to D_\bullet^k,\;k\ge 0\}$$ as its generating set of cofibrations and $$J=\{\zeta_k:0\to D_\bullet^k,\;k\ge 1\}$$ as its generating set of trivial cofibrations. Here $D^k_\bullet$ is the $k$-disc chain complex \be\label{Disc}D^k_{\bullet}: \cdots \longrightarrow 0\longrightarrow 0\longrightarrow \stackrel{(k)}{R} \stackrel{\id}{\longrightarrow} \stackrel{(k-1)}{R}\longrightarrow 0\longrightarrow \cdots\longrightarrow \stackrel{(0)}{0}\;,\ee $S^k_\bullet$ is the $k$-sphere chain complex \be\label{Sphere}S^k_\bullet: \cdots \longrightarrow 0\longrightarrow 0\longrightarrow \stackrel{(k)}{R}\longrightarrow 0\longrightarrow \cdots\longrightarrow \stackrel{(0)}{0}\;,\ee and $i_k,\zeta_k$ are the canonical chain maps. The weak equivalences of this model structure are the chain maps that induce an isomorphism in homology, the cofibrations are the injective chain maps with degree-wise projective cokernel $(\,$projective object in ${\tt Mod}(R)$$\,)$, and the fibrations are the chain maps that are surjective in $(\,$strictly$\,)$ positive degrees. Further, the trivial cofibrations are the injective chain maps $i$ whose cokernel $\coker(i)$ is strongly projective as a chain complex $(\,$strongly projective object $\coker(i)$ in ${\tt Ch}_+(R)$, in the sense that, for any chain map $c:\coker(i)\to C$ and any chain map $p:D\to C$, there is a chain map $\ell:\coker(i)\to D$ such that $p\circ\ell=i$, if $p$ is surjective in $(\,$strictly$\,)$ positive degrees$\,)$.\end{theo}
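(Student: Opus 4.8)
The plan is to deduce the whole statement from Kan's recognition theorem for cofibrantly generated model categories (\cite[Thm.~2.1.19]{Hov}, \cite{GS}), applied with $W$ the class of quasi-isomorphisms and with the proposed sets $I$ and $J$. The category ${\tt Ch}_+(R)$ is complete and cocomplete, limits and colimits being formed degreewise in the complete and cocomplete category ${\tt Mod}(R)$, and $W$ satisfies two-out-of-three and is closed under retracts, both being immediate consequences of the functoriality of homology and of the long exact sequence. Thus the substance of the proof lies in the smallness hypotheses and in the three cell/lifting compatibilities relating $I$, $J$ and $W$, which I would organize around two lifting computations followed by one homological argument.

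First I would compute the two classes of maps with the right lifting property. Chain maps $S^{k-1}_\bullet\to X$ correspond to cycles in $X_{k-1}$, chain maps $D^k_\bullet\to Y$ to arbitrary elements of $Y_k$, and $i_k$ sends the generator of $S^{k-1}_\bullet$ to the boundary of the top generator of $D^k_\bullet$. A direct diagram chase then shows that $p\colon X\to Y$ lies in $I\text{-inj}$ exactly when, for every $k$, every cycle $z\in Z_{k-1}(X)$ and every $y\in Y_k$ with $p(z)=dy$ admit a common preimage $\tilde x\in X_k$ with $d\tilde x=z$ and $p(\tilde x)=y$. Since chain maps out of the zero complex are unique, the same chase gives $p\in J\text{-inj}$ iff $p$ is surjective in every strictly positive degree. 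The latter is already the asserted description of the fibrations; the former I would reinterpret as ``$p$ is a surjective quasi-isomorphism'', the nontrivial half of that reinterpretation being the homological step below.

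Smallness is cheap: the spheres and discs are bounded complexes whose components are the free module $R$, hence finitely presented (compact) objects of ${\tt Ch}_+(R)$ whose corepresented functors commute with the filtered colimits occurring in transfinite compositions; every domain and codomain is therefore small relative to the whole category, and the resulting structure is finitely generated. For the compatibilities, a pushout of $\zeta_k$ along $0\to X$ is the summand inclusion $X\to X\oplus D^k_\bullet$, and a transfinite composite of such is an inclusion $X\to X\oplus\bigoplus_i D^{k_i}_\bullet$ onto a direct summand whose complement is a sum of contractible complexes; this is at once a quasi-isomorphism and, each $\zeta_k$ lifting against every surjection (hence $\zeta_k\in I\text{-cof}$), an $I$-cofibration, giving $J\text{-cell}\subseteq W\cap I\text{-cof}$, while $I\text{-inj}\subseteq W\cap J\text{-inj}$ follows from the two computations. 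The one substantial step is the acyclicity condition $W\cap J\text{-inj}\subseteq I\text{-inj}$: given $p$ surjective in positive degrees and a quasi-isomorphism, I must produce the cycle-lifts of the $I$-injective condition. Here I would first upgrade $p$ to a degreewise surjection, surjectivity in degree $0$ following by correcting a preimage of a $0$-cycle by a boundary with the help of surjectivity in degree $1$, then pass to the short exact sequence $0\to\ker p\to X\to Y\to 0$, read off from its long homology sequence that $\ker p$ is acyclic, and finally solve the lifting problem by induction inside the acyclic complex $\ker p$. This homological argument is the heart of the matter and the place I would expect to spend the most care.

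With the recognition theorem in force, the weak equivalences, fibrations ($=J\text{-inj}$) and trivial fibrations ($=I\text{-inj}$) are as stated, and the cofibrations (resp.\ trivial cofibrations) are the retracts of relative $I$-cell (resp.\ $J$-cell) complexes; it remains to match these with the module-theoretic descriptions. A single $I$-cell attachment has cokernel $D^k_\bullet/S^{k-1}_\bullet\cong S^k_\bullet$, so any relative $I$-cell complex is injective with cokernel a direct sum of spheres, in particular degreewise free; injectivity and ``degreewise projective cokernel'' are both stable under retracts, since a retract of a monomorphism is a monomorphism and a retract of a projective module is projective, so every cofibration is an injection with degreewise projective cokernel. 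Conversely, if $f\colon A\to B$ is injective with each $(\coker f)_n$ projective, then every sequence $0\to A_n\to B_n\to(\coker f)_n\to 0$ splits, and one builds a lift against any trivial fibration degree by degree, using projectivity of $(\coker f)_n$ to define the new component of the lift and acyclicity of $\ker p$ to adjust it into a chain map, exhibiting $f$ as an $I$-cofibration. The trivial-cofibration case is parallel: a relative $J$-cell complex has cokernel a direct sum of discs $D^{k_i}_\bullet$, which is exactly a strongly projective complex in the stated sense (its lifting property against fibrations being the object-level shadow of $0\to D^k_\bullet\in J$), and strong projectivity is again stable under retracts, so the trivial cofibrations are precisely the injections with strongly projective cokernel.
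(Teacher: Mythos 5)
The paper itself offers no proof of this statement: Theorem~\ref{FinGenModDGDM} is recalled in the preliminaries from \cite{BPP1} and the standard references \cite{GS}, \cite{Hov} (see also \cite{DS}). Your proposal is the classical recognition-theorem proof of the projective model structure on ${\tt Ch}_+(R)$, and it is essentially correct: the identification of $J$-inj with the positive-degree surjections, of $I$-inj with the surjective quasi-isomorphisms, the finiteness of the domains and codomains of $I\cup J$, and the acyclicity step carried out inside the acyclic complex $\ker p$ are exactly the right ingredients in the right order.

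Two points deserve tightening. First, a general relative $I$-cell complex is \emph{not} an injection whose cokernel is a direct sum of spheres \emph{as a complex}: later cells may be attached along earlier ones, so the induced differential on the cokernel need not vanish (attaching a $0$-cell and then a $1$-cell along its generator produces cokernel $D^1_\bullet$, not $S^0_\bullet\oplus S^1_\bullet$). What is true --- and is all your argument actually uses --- is that the cokernel is degreewise free. Second, the closing sentence on trivial cofibrations proves only one inclusion: stability of strong projectivity under retracts shows that every trivial cofibration is an injection with strongly projective cokernel, but the converse needs its own argument. It is short: applying strong projectivity of $\coker(f)$ to the identity map and to the (everywhere surjective) quotient $B\to\coker(f)$ splits $f$ as a map of complexes, after which a lifting problem against an arbitrary fibration is solved on $A$ by the given data and on the complement $\coker(f)$ by strong projectivity again; hence $f\in\op{LLP}(\text{Fib})$. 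With these repairs the proof is complete. (Incidentally, the statement's parenthetical definition of strong projectivity contains a typo you should not reproduce: the condition must read $p\circ\ell=c$, not $p\circ\ell=i$.)
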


\begin{prop}\label{EquivShMod} If $X$ is a smooth affine algebraic variety, its global section functor yields an equivalence of symmetric monoidal categories \be\label{ShVsSectqcModDMon}\zG(X,\bullet):({\tt DG_+qcMod}(\cD_X),\0_{\cO_X},\cO_X)\to ({\tt DG\cD M},\0_{\cO},\cO)\;\ee between the category of differential non-negatively graded modules over the sheaf $\cD_X$ of differential operators on $X$, which are quasi-coherent as modules over the function sheaf $\cO_X$, and the category of differential non-negatively graded modules over the ring $\cD=\cD_X(X)$ of global sections of $\cD_X$. The tensor product is taken over the sheaf $\cO_X$ and over the algebra $\cO=\cO_X(X)$, respectively. \end{prop}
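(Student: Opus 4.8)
The plan is to bootstrap from the classical affine equivalence for quasi-coherent $\cO_X$-modules and to upgrade it in three successive layers: to a $\cD$-action, to the differential graded setting, and finally to the symmetric monoidal structure. First I would recall that, $X$ being affine, the global section functor $\zG(X,\bullet)\colon{\tt qcMod}(\cO_X)\to{\tt Mod}(\cO)$ is an \emph{exact} equivalence of categories, with quasi-inverse the localization (``tilde'') functor $M\mapsto\widetilde{M}$, and that it is strong monoidal for $\0_{\cO_X}$, i.e.\ there is a natural isomorphism $\zG(X,M)\0_\cO\zG(X,N)\xrightarrow{\sim}\zG(X,M\0_{\cO_X}N)$ sending the unit $\cO=\zG(X,\cO_X)$ to $\cO$ and satisfying the associativity, unit and symmetry coherences; see e.g.\ \cite[II.5]{Har}. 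Exactness is the crucial feature, as it is what lets the whole argument pass degreewise to chain complexes with no derived correction.

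Second, I would promote this to $\cD_X$-modules. A quasi-coherent (left) $\cD_X$-module is a quasi-coherent $\cO_X$-module $M$ equipped with an action morphism $\cD_X\0_{\cO_X}M\to M$ of quasi-coherent $\cO_X$-modules satisfying the usual associativity and unit axioms, all of which are encoded by commutative diagrams in ${\tt qcMod}(\cO_X)$. Since $\cD_X$ is itself quasi-coherent over $\cO_X$ with $\zG(X,\cD_X)=\cD$, and since $\zG(X,\bullet)$ is strong monoidal for $\0_{\cO_X}$, the action transports to an action $\cD\0_\cO\zG(X,M)\to\zG(X,M)$, and the axioms transport with it because they are purely diagrammatic; conversely $\widetilde{(\,\cdot\,)}$ turns a $\cD$-module into a quasi-coherent $\cD_X$-module. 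Hence $\zG(X,\bullet)$ restricts to an equivalence between quasi-coherent $\cD_X$-modules and $\cD$-modules.

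Third, I would extend degreewise to differential non-negatively graded objects. A differential graded module is just a chain complex internal to the relevant module category; because $\zG(X,\bullet)$ is additive and exact it sends such a complex to a chain complex of $\cD$-modules and preserves the differential (itself a morphism of modules), with the tilde functor performing the reverse assignment. This already yields the claimed equivalence ${\tt DG_+qcMod}(\cD_X)\simeq{\tt DG\cD M}$ of the underlying categories.

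Finally --- and this is the real content --- I would verify that the equivalence is symmetric monoidal for the two structures $\0_{\cO_X}$ and $\0_\cO$. The affine case of the first step already supplies the monoidal constraint and its coherence \emph{at the $\cO$-level}; what remains is to check that this same natural isomorphism is $\cD$-linear for the Leibniz/derivation actions carried by the two tensor products, where a vector field acts by $\theta\cdot(m\0 n)=(\theta\cdot m)\0 n+m\0(\theta\cdot n)$ on $M\0_{\cO_X}N$ and similarly on $\zG(X,M)\0_\cO\zG(X,N)$. The hard part will be precisely this compatibility, and I expect to reduce it to generators of $\cD$ over $\cO$: the $\cO$-linear part is handled by the affine monoidal isomorphism, so it suffices to check the action of vector fields, where both sides are governed by the one and the same Leibniz rule and the isomorphism therefore intertwines them. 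Since the associativity, unit and symmetry constraints are built from the $\cO$-module ones --- on which compatibility is classical --- they survive on the $\cD$-level and in each homological degree. Combining the four steps gives the asserted equivalence of symmetric monoidal categories.
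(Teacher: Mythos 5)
The paper itself gives no proof of this proposition --- it is recalled from \cite{BPP1} as background --- but your layered argument (Serre's affine equivalence for quasi-coherent $\cO_X$-modules with the tilde functor as quasi-inverse, transport of the $\cD_X$-action along the strong monoidal equivalence, degreewise passage to non-negatively graded complexes, and the final check that the monoidal constraint intertwines the two Leibniz actions by reducing to the generation of $\cD$ by $\cO$ and vector fields, which is where smoothness of $X$ enters) is exactly the standard route taken there. The proposal is correct and matches the intended proof.
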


\begin{prop}\label{EquivShAlg} If $X$ is a smooth affine algebraic variety, its global section functor induces an equivalence of categories \be\label{ShVsSectqcDAlg} \zG(X,\bullet): {\tt DG_+qcCAlg}(\cD_X)\rightarrow{\tt DG\cD A}\;\ee between the category of differential non-negatively graded $\cO_X$-quasi-coherent commutative algebras over $\cD_X$ and the category of differential non-negatively graded commutative algebras over $\cD$. \end{prop}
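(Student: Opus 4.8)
The plan is to deduce this algebra-level equivalence from the module-level symmetric monoidal equivalence of Proposition \ref{EquivShMod} by passing to commutative monoids. Recall first that, by definition, an object of ${\tt DG\cD A}$ is precisely a commutative monoid in the symmetric monoidal category $({\tt DG\cD M},\0_\cO,\cO)$: the $\cO$-bilinearity of the multiplication, the unit, associativity, graded-commutativity, the derivation action of vector fields, and the Leibniz rule for the differential are all encoded in the data of a monoid multiplication $\mu\colon A\0_\cO A\to A$ and unit $\eta\colon\cO\to A$ in ${\tt DG\cD M}$ satisfying the usual axioms. Likewise, an object of ${\tt DG_+qcCAlg}(\cD_X)$ is exactly a commutative monoid in $({\tt DG_+qcMod}(\cD_X),\0_{\cO_X},\cO_X)$, the quasi-coherence of the tensor product ensuring that the monoidal structure restricts to this subcategory. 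Morphisms on either side are the monoid morphisms. Thus both categories in the statement are of the form $\mathrm{CMon}(-)$ applied to the two symmetric monoidal categories appearing in \eqref{ShVsSectqcModDMon}.

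Second, I would invoke the general categorical fact that a strong symmetric monoidal functor carries commutative monoids to commutative monoids, and that a symmetric monoidal \emph{equivalence} induces an equivalence on the categories of commutative monoids. Concretely, writing $F:=\zG(X,\bullet)$ with coherence isomorphisms $\phi_{M,N}\colon F(M)\0_\cO F(N)\isoto F(M\0_{\cO_X}N)$ and $\phi_0\colon\cO\isoto F(\cO_X)$ supplied by Proposition \ref{EquivShMod}, one equips $F(A)$, for a monoid $(A,\mu,\eta)$, with the multiplication $F(\mu)\circ\phi_{A,A}$ and the unit $F(\eta)\circ\phi_0$. The monoid axioms for this transported structure follow formally from naturality of $\phi$ together with its compatibility with the associativity, unit, and symmetry constraints, which is exactly the content of $F$ being strong symmetric monoidal. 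This yields a functor $\mathrm{CMon}(F)\colon{\tt DG_+qcCAlg}(\cD_X)\to{\tt DG\cD A}$ lifting $F$.

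Third, to see that $\mathrm{CMon}(F)$ is itself an equivalence, I would use that a symmetric monoidal equivalence admits a quasi-inverse $G$ that is again strong symmetric monoidal, with the unit and counit being monoidal natural isomorphisms; this is a standard strengthening of the bare equivalence in \eqref{ShVsSectqcModDMon}. Then $\mathrm{CMon}(G)$ is a quasi-inverse to $\mathrm{CMon}(F)$: the monoidal isomorphisms $FG\cong\mathrm{id}$ and $GF\cong\mathrm{id}$ are isomorphisms of monoids, giving the required $\mathrm{CMon}(F)\,\mathrm{CMon}(G)\cong\mathrm{id}$ and $\mathrm{CMon}(G)\,\mathrm{CMon}(F)\cong\mathrm{id}$. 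Full faithfulness can alternatively be checked by hand: a monoid morphism is a module morphism commuting with $\mu$ and $\eta$, and since $F$ is fully faithful on modules and the multiplications and units are transported along the fixed $\phi$'s, $F$ restricts to a bijection on monoid morphisms.

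Finally, the main obstacle is bookkeeping rather than a deep difficulty, since the genuinely hard sheaf-theoretic input, namely that global sections define a \emph{strong} symmetric monoidal equivalence on quasi-coherent modules matching $\0_{\cO_X}$ with $\0_\cO$, is already packaged in Proposition \ref{EquivShMod}. The points that need care are: (i) confirming that the paper's definition of a quasi-coherent differential graded commutative $\cD_X$-algebra coincides on the nose with ``commutative monoid in $({\tt DG_+qcMod}(\cD_X),\0_{\cO_X},\cO_X)$'', including that the Leibniz rule for the differential is captured by $\mu$ being a morphism of chain complexes; (ii) upgrading the symmetric monoidal equivalence to an adjoint equivalence with monoidal unit and counit, so that the transported monoid structures are compatible; and (iii) tracking quasi-coherence through the tensor product so that $\mathrm{CMon}(F)$ and its quasi-inverse land in the intended subcategories. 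Once these are in place, the equivalence of categories is immediate.
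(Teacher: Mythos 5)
Your proposal is correct and matches the route the paper intends: the present article states Proposition \ref{EquivShAlg} without proof (it is recalled from \cite{BPP1}), but it explicitly identifies ${\tt DG\cD A}$ with the commutative monoids in $({\tt DG\cD M},\0_\cO,\cO)$ (see the proof of Lemma \ref{DirSumTenPro}) and records the global-section functor as a symmetric monoidal equivalence in Proposition \ref{EquivShMod}, so passing to $\mathrm{CMon}(-)$ is exactly the intended argument. Your caveat (i) is the one genuinely delicate point -- the multiplication of a monoid in ${\tt DG\cD M}$ is only $\cO$-bilinear and vector fields act on products as derivations, which is precisely how the paper defines a commutative $\cD$-algebra -- and you have flagged it appropriately.
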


\begin{prop} The graded symmetric tensor algebra functor $\cS$ and the forgetful functor $\op{For}$ provide an adjoint pair $${\cal S}:{\tt DG\cD M}\rightleftarrows{\tt DG\cD A}:\op{For}\;$$ between the category $\tt DG\cD M$ and the category $\tt DG\cD A$.\end{prop}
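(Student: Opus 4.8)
The plan is to realize the proposition as an instance of the standard free–forgetful adjunction between a symmetric monoidal category and its category of commutative monoids. By Proposition~\ref{EquivShMod}, the triple $({\tt DG\cD M},\0_\cO,\cO)$ is symmetric monoidal, and, unwinding the definition recalled in the introduction, a commutative monoid in this category is exactly an object of ${\tt DG\cD A}$: a multiplication $\mu:A\0_\cO A\to A$ that is a morphism of ${\tt DG\cD M}$ is precisely an $\cO$-bilinear, graded-commutative, associative, unital product for which the differential is a derivation and on which vector fields act by the Leibniz rule---the last two conditions being encoded in the requirement that $\mu$ be a chain map and a $\cD$-linear map, since the $\cD$-action on $A\0_\cO A$ is the one furnished by the symmetric monoidal structure. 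Thus I would first record the identification ${\tt DG\cD A}\simeq\op{CMon}({\tt DG\cD M})$, under which $\op{For}$ becomes the usual underlying-object functor.

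Next I would invoke the general fact that, in any cocomplete symmetric monoidal category $(\cC,\0,\mathbf 1)$ whose tensor product preserves colimits in each variable separately, the forgetful functor $\op{CMon}(\cC)\to\cC$ admits a left adjoint, the free commutative algebra functor $M\mapsto\bigoplus_{n\ge 0}(M^{\0 n})_{\Sigma_n}$, where $M^{\0 n}$ is the $n$-fold tensor power (over $\cO$ in our case) and $\Sigma_n$ acts through the symmetry isomorphisms with the Koszul sign rule. I would then check the two hypotheses for $\cC={\tt DG\cD M}$: cocompleteness follows from Proposition~\ref{EquivShMod} together with the cocompleteness of chain complexes of $\cD$-modules, while the fact that $\0_\cO$ preserves colimits in each variable follows because tensoring over $\cO$ is a left adjoint in each slot and the $\cD$-module structure on a colimit is compatible with that on the tensor product. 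The symmetric algebra $\cS(M)$ is exactly this free commutative monoid, its product being concatenation of tensors and its unit the summand $n=0$ equal to $\cO$; the unit of the adjunction $\eta_M:M\to\op{For}(\cS(M))$ is the weight-one inclusion, and the counit $\varepsilon_A:\cS(\op{For}(A))\to A$ is the iterated multiplication of $A$.

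The only point that is genuinely specific to this setting---and the one a reader is most likely to worry about---is that $\cS(M)$, built from $\0_\cO$, is a bona fide object of ${\tt DG\cD M}$ rather than merely a differential graded $\cO$-algebra. This, however, is automatic once one works inside the symmetric monoidal category $({\tt DG\cD M},\0_\cO)$: the $\cD$-action on every tensor power $M^{\0 n}$, hence on $\cS(M)$, is part of that structure, and it is precisely the Leibniz action of vector fields on products. Concretely, given $f:M\to\op{For}(A)$ in ${\tt DG\cD M}$, its multiplicative extension $\tilde f:\cS(M)\to A$ is forced on generators and is $\cO$-linear by construction; that $\tilde f$ is moreover $\cD$-linear follows because $\cD$ is generated by $\cO$ and vector fields, and both $\cS(M)$ and $A$ carry the derivation action of vector fields, so $\cD$-compatibility propagates from $\cO$-compatibility and multiplicativity. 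Naturality in $M$ and $A$ is then routine, yielding the bijection $\Hom_{\tt DG\cD A}(\cS(M),A)\cong\Hom_{\tt DG\cD M}(M,\op{For}(A))$. I expect the main obstacle to be not the formal adjunction but exactly this verification that forming the symmetric algebra does not leave the category of $\cD$-modules; all the remaining steps are a direct application of symmetric monoidal category theory.
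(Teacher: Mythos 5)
Your proposal is correct and follows exactly the route the paper relies on: this proposition is recalled from \cite{BPP1} without proof here, but the identification ${\tt DG\cD A}=\op{CMon}({\tt DG\cD M})$ that you take as your starting point is precisely the one the paper uses (e.g.\ in the proof of Lemma \ref{DirSumTenPro}), and the free commutative monoid construction $\bigoplus_n (M^{\0 n})_{\Sigma_n}$ over the symmetric monoidal structure $\0_\cO$ is the intended definition of $\cS$. You also correctly isolate the one genuinely $\cD$-specific point --- that the $\cD$-action on $\cS(M)$ is the Leibniz action of vector fields on tensor powers, so that the multiplicative extension of a $\cD$-linear map on generators remains $\cD$-linear --- which is the same subtlety the introduction flags when it notes that a commutative monoid in ${\tt DG\cD M}$ has an $\cO$-bilinear product on which vector fields act as derivations.
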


\begin{theo}\label{FinGenModDGDA} The category $\tt DG\mathcal{D}A$ of differential non-negatively graded commutative $\cD$-algebras is a finitely $(\,$and thus a cofibrantly$\,)$ generated model category $(\,$in the sense of \cite{GS} and in the sense of \cite{Hov}$\;)$, with $\cS (I)=\{\cS (\iota_k):\iota_k\in I\}$ as its generating set of cofibrations and $\cS (J)=\{\cS (\zeta_k): \zeta_k\in J\}$ as its generating set of trivial cofibrations. The weak equivalences are the $\tt DG\cD A$-morphisms that induce an isomorphism in homology. The fibrations are the $\tt DG\cD A$-morphisms that are surjective in all positive degrees $p>0$.\end{theo}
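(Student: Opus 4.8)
The plan is to obtain the model structure on ${\tt DG\cD A}$ by \emph{transferring} (lifting) the finitely generated model structure of ${\tt DG\cD M}={\tt Ch}_+(\cD)$ of Theorem~\ref{FinGenModDGDM} across the free-forgetful adjunction $\cS:{\tt DG\cD M}\rightleftarrows{\tt DG\cD A}:\op{For}$. I would invoke Kan's transfer theorem: if ${\tt DG\cD A}$ is bicomplete and one declares a morphism $f$ to be a weak equivalence (resp.\ fibration) exactly when $\op{For}(f)$ is one in ${\tt DG\cD M}$, then $\cS(I)$ and $\cS(J)$ generate a cofibrantly generated model structure provided (i) the domains of $\cS(I)$ and $\cS(J)$ are small relative to $\cS(I)$-cell, resp.\ $\cS(J)$-cell, complexes, and (ii) $\op{For}$ sends every relative $\cS(J)$-cell complex to a weak equivalence (the \emph{acyclicity condition}); the cofibrations are then forced to be the $\cS(I)$-cofibrations. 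Bicompleteness holds because ${\tt DG\cD A}$ is monadic over the bicomplete category ${\tt DG\cD M}$. I would record at once that, since $\op{For}$ does not alter the underlying chain complex of $\cO$-modules, ``weak equivalence'' unwinds to ``isomorphism in homology'' and ``fibration'' to ``surjective in every strictly positive degree'', matching the asserted description.

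For the smallness condition~(i), I would argue that finiteness transfers along the adjunction. For any object $X$ of ${\tt DG\cD M}$ one has $\Hom_{\tt DG\cD A}(\cS(X),-)\cong\Hom_{\tt DG\cD M}(X,\op{For}(-))$, and $\op{For}$ preserves filtered colimits; hence if $X$ is finite (compact) in ${\tt DG\cD M}$, then $\cS(X)$ is finite in ${\tt DG\cD A}$. Since the domains and codomains of $I$ and $J$ are the finite complexes $S^{k-1}_\bullet$ and $D^k_\bullet$, the domains of $\cS(I)$ and $\cS(J)$ are finite, which simultaneously secures~(i) and yields the claim that the resulting structure is \emph{finitely} generated.

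The essential point is the acyclicity condition~(ii), and this is where I expect the real work to lie. The domain of each generator $\cS(\zeta_k)\colon\cS(0)=\cO\to\cS(D^k_\bullet)$ is the initial object $\cO$, so the pushout of a coproduct $\coprod_i\cS(\zeta_{k_i})$ along any $A\in{\tt DG\cD A}$ is computed by the symmetric monoidal (coproduct $=\otimes_\cO$) structure as $A\to A\otimes_\cO\cS\big(\bigoplus_i D^{k_i}_\bullet\big)$, using $\bigotimes_i\cS(D^{k_i}_\bullet)\cong\cS\big(\bigoplus_i D^{k_i}_\bullet\big)$. Now each disc $D^k_\bullet=(\cD\xrightarrow{\id}\cD)$ is contractible in ${\tt DG\cD M}$ via an $\cO$-linear (indeed $\cD$-linear) contracting homotopy, hence so is $C:=\bigoplus_i D^{k_i}_\bullet$; applying the polynomial functor $\cS$ I would show that $\cS(\zeta)\colon\cO=\cS(0)\to\cS(C)$ is a \emph{chain homotopy equivalence} of $\cO$-modules, the induced homotopy being obtained from that of $C$ by the graded Leibniz/symmetrization formula. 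Because $A\otimes_\cO(-)$ is additive it preserves chain homotopies, so $A\to A\otimes_\cO\cS(C)=\op{For}(\text{pushout})$ is again a chain homotopy equivalence, in particular a homology isomorphism. Finally, a relative $\cS(J)$-cell complex is a transfinite composite of such maps; since the underlying category of $\cO$-modules is Grothendieck, filtered colimits are exact and homology commutes with them, so the transfinite composite remains a homology isomorphism. This establishes~(ii) and completes the transfer.

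The main obstacle is thus the homotopy-theoretic analysis of $\cS(C)$ for $C$ contractible: one must produce an explicit $\cO$-linear contracting homotopy on the symmetric algebra and check that it is compatible with the $\Sigma_n$-action defining $\cS^n$. This is the step where the characteristic-zero hypothesis (ensuring $\Q\subseteq\cO$, so that symmetrization is available and the symmetric powers of a contractible complex are again contractible) is essential; an alternative route replaces the homotopy argument by a K\"unneth computation, which is where the flatness of $\cD$ over $\cO$ would enter. It is precisely this interface between the commutative $\otimes_\cO$-multiplication and the derivation action of $\cD$ that makes~(ii) more than a formality. Once~(ii) is established, the identification of the weak equivalences and fibrations recorded in the first paragraph finishes the proof.
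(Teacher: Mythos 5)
Your proposal is correct and follows essentially the same route as the paper, which obtains this theorem (recalled from \cite{BPP1}) by Quillen transfer of the finitely generated model structure on ${\tt DG\cD M}={\tt Ch}_+(\cD)$ along the adjunction $\cS\dashv\op{For}$, with the acyclicity condition reduced to the contractibility of $\cS(D^k_\bullet)$ in characteristic zero and the preservation of quasi-isomorphisms under the relevant pushouts and transfinite compositions. Your identification of the weak equivalences, fibrations, smallness of the domains of $\cS(I)$ and $\cS(J)$, and the char-$0$/flatness issues as the locus of real work all match the paper's treatment.
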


Below, we will describe the cofibrations and functorial fibrant and cofibrant replacement functors.\medskip

The model structure on $\tt DG\cD A$ is obtained by Quillen transfer of the model structure on $\tt{DG\cD M}=\tt{Ch}_+(\cD)$. However, since $\cD$-modules (resp., $\cD$-algebras) are actually sheaves of modules (resp., sheaves of algebras), the category of differential graded $\cD$-modules (resp., differential graded $\cD$-algebras) over $X$, is rather ${\tt DG_+qcMod}(\cD_X)$ (resp., ${\tt DG_+qcCAlg}(\cD_X)$). In view of Proposition \ref{EquivShMod} (resp., Proposition \ref{EquivShAlg}), the finitely generated model structure on $\tt DG\cD M$ (resp., $\tt DG\cD A$) induces a finitely generated model structure on ${\tt DG_+qcMod}(\cD_X)$ (resp., ${\tt DG_+qcCAlg}(\cD_X)$).


\section{Description of $\tt DG\cD A$-cofibrations}

\subsection{Relative Sullivan $\cD$-algebras}

We recall the definition of relative Sullivan $\cD$-algebras \cite{BPP1}.\medskip

If $(A,d_A)\in\tt DG\cD A$ and if $(M,d_M)\in\tt DG\cD M$, then $(A\0\cS M,d)\in\tt DG\cD A$. The differential $d_S$ of $\cS M$ is canonically generated by $d_M$ and the differential $d$ of $A\0\cS M$ is given by \be\label{Split}d=d_A\0\id+\id\0\, d_S\;.\ee If $V\in\tt G\cD M$, we have $(V,0)\in\tt DG\cD M$ and $A\0\cS V\in\tt G\cD A$. In the sequel, we equip this graded $\cD$-algebra with a differential $d$ that coincides with $d_A\0\id$ on $A\0 1_\cO\simeq A$, but not with some differential $\id\0\, d_S$ on $1_A\0\cS V\simeq \cS V$. To distinguish such a differential graded $\cD$-algebra from $(A\0\cS V,d)$ with differential (\ref{Split}), we denote it by $(A\boxtimes\cS V,d)$.

\begin{defi} A {\bf relative Sullivan $\cD$-algebra} $(\,${\small RS$\cD\!$A}$\,)$ is a $\tt DG\cD A$-morphism
$$(A,d_A)\to(A\boxtimes \cS V,d)\;$$ that sends $a\in A$ to $a\0 1\in A\boxtimes \cS V$. Here $V$ is a free non-negatively graded $\mathcal{D}$-module, which admits a homogeneous basis $(g_\za)_{\za\in J}$ that is indexed by a well-ordered set $J$, and is such that \be\label{Lowering}d g_\za \in A\boxtimes \cS V_{<\za}\;,\ee for all $\za\in J$. In the last requirement, we set $V_{<\za}:=\bigoplus_{\zb<\za}\cD\cdot g_\zb\,$. We refer to Property (\ref{Lowering}) by saying that $d$ is {\bf lowering}. A {\small RS$\cD\!$A} with Property \be\label{minimal}\za\le\zb \Rightarrow \deg g_\za\le\deg g_\zb\;,\ee where $\deg g_\za$ is the degree of $g_\za$ $(\,$resp., with Property (\ref{Split}); over $(A,d_A)=(\cO,0)$$\,)$ is called a {\bf minimal} {\small RS$\cD\!$A} $(\,$resp., a {\bf split} {\small RS$\cD\!$A}; a {\bf Sullivan $\cD$-algebra} $(\,${\small S$\cD\!$A}$\,)$$\,)$.\end{defi}

The next lemma allows to define non-split {\small RS$\cD$A}-s, as well as ${\tt DG\cD A}$-morphisms from such an {\small RS$\cD$A} into another differential graded $\cD$-algebra.

\begin{lem}\label{LemRSA} Let $(T,d_T)\in\tt DG\cD A$, let $(g_j)_{j\in J}$ be a family of symbols of degree $n_j\in \N$, and let $V=\bigoplus_{j\in J}\cD\cdot g_j$ be the free non-negatively graded $\cD$-module with homogeneous basis $(g_j)_{j\in J}$.\smallskip

(i) To endow the graded $\cD$-algebra $T\0\cS V$ with a differential graded $\cD$-algebra structure $d$, it suffices to define \be\label{CondRSADiff}d g_j\in T_{n_j-1}\cap d_T^{-1}\{0\}\;,\ee to extend $d$ as $\cD$-linear map to $V$, and to equip $T\0\cS V$ with the differential $d$ given, for any $t\in T_p,\,v_1\in V_{n_1},\,\ldots,\,v_k\in V_{n_k}\,$, by \be\label{DefRSADiff}d({t}\0 v_1\odot\ldots\odot v_k)=$$ $$d_T({t})\0 v_1\odot\ldots\odot v_k+(-1)^p\sum_{\ell=1}^k(-1)^{n_\ell\sum_{j<\ell}n_j}({t}\ast d(v_\ell))\0v_1\odot\ldots\widehat{\ell}\ldots\odot v_k\;,\ee where $\ast$ is the multiplication in $T$. If $J$ is a well-ordered set, the natural map $$(T,d_T)\ni {t}\mapsto {t}\0 1_\cO\in (T\boxtimes\cS V,d)$$ is a {\small RS$\cD\!$A}.\smallskip

(ii) Moreover, if $(B,d_B)\in{\tt DG\cD A}$ and $p\in{\tt DG\cD A}(T,B)$, it suffices -- to define a morphism $q\in{\tt DG\cD A}(T\boxtimes\cS V,B)$ (where the differential graded $\cD$-algebra $(T\boxtimes\cS V,d)$ is constructed as described in (i)) -- to define \be\label{CondRSAMorph}q(g_j)\in B_{n_j}\cap d_B^{-1}\{p\,d(g_j)\}\;,\ee to extend $q$ as $\cD$-linear map to $V$, and to define $q$ on $T\0\cS V$ by \be\label{DefRSAMorph}q({t}\0 v_1\odot\ldots\odot v_k)=p({t})\star q(v_1)\star\ldots\star q(v_k)\;,\ee where $\star$ denotes the multiplication in $B$.\end{lem}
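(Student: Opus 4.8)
The plan is to handle (i) and (ii) by the same device: prescribe the desired $\cD$-linear operator on the algebra generators $T\cup\{g_j\}$, extend it by the (absolute, resp.\ relative) graded Leibniz rule so that formulas (\ref{DefRSADiff}) and (\ref{DefRSAMorph}) are forced, and then reduce every remaining identity to a check on generators via the principle that a graded derivation (resp.\ a derivation along a fixed algebra morphism) is determined by its values on a generating set and vanishes as soon as it vanishes there.

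For (i), I would first note that $T\0\cS V$ is the coproduct of $T$ and $\cS V$ in the category $\tt G\cD A$, since these are commutative monoids in the symmetric monoidal category $\tt G\cD M$ and $\cS V$ is the free graded-commutative $\cD$-algebra on $V$ (adjunction $\cS\dashv\op{For}$). Formula (\ref{DefRSADiff}) is precisely the extension, as a degree $-1$ graded derivation of this coproduct, of the pair given by $d_T$ on $T$ and by the $\cD$-linear extension of $g_j\mapsto dg_j$ on $V$; the Koszul sign $(-1)^{n_\ell\sum_{j<\ell}n_j}$ is exactly what makes the prescription compatible with the graded-commutativity relations $v_i\odot v_j=(-1)^{n_in_j}v_j\odot v_i$ of $\cS V$, so $d$ is well defined and is a graded derivation of degree $-1$ by construction. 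Two checks remain. The operator $d$ is $\cD$-linear: as $\cD$ is generated by $\cO$ and by vector fields, and vector fields act on any $\cD$-algebra as derivations of the product, for a vector field $\theta$ the graded commutator $[\theta,d]=\theta d-d\theta$ is again a degree $-1$ derivation of $T\0\cS V$; it vanishes on $T$ because $d_T$ is $\cD$-linear and on each $g_j$ because $d|_V$ was defined by $\cD$-linear extension, hence it vanishes identically. And $d^2=0$: being the square of an odd derivation, $d^2$ is a derivation of degree $-2$, and $d^2=d_T^2=0$ on $T$, while $d^2g_j=d(dg_j)=d_T(dg_j)=0$ by the cycle condition $dg_j\in d_T^{-1}\{0\}$; so $d^2$ vanishes on generators, hence everywhere. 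Finally, for well-ordered $J$ the lowering property (\ref{Lowering}) is immediate: $dg_j\in T_{n_j-1}\subseteq T\0 1_\cO\subseteq T\boxtimes\cS V_{<j}$; and $t\mapsto t\0 1$ is a $\cD$-linear algebra morphism with $d(t\0 1)=d_T(t)\0 1$, hence a {\small RS$\cD$A}.

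For (ii), the morphism $q$ is again forced: the degree-preserving $\cD$-linear assignment $g_j\mapsto q(g_j)\in B_{n_j}$ is a $\tt G\cD M$-morphism $V\to\op{For}(B)$, so by the universal property of $\cS V$ it extends uniquely to a $\tt G\cD A$-morphism $\cS V\to B$; since $T\0\cS V$ is the coproduct, the pair $(p,\ \cS V\to B)$ determines the unique $\tt G\cD A$-morphism $q$ restricting to $p$ on $T$, namely (\ref{DefRSAMorph}). Thus $q$ is a well-defined $\cD$-linear graded algebra morphism, and it only remains to verify $q\circ d=d_B\circ q$. Both composites are degree $-1$ maps satisfying the Leibniz rule along $q$, so their difference is a $q$-derivation and it suffices to test equality on generators. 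On $t\in T$ one has $q(d_Tt)=p(d_Tt)=d_B(p(t))=d_B(q(t))$ because $p$ is a $\tt DG\cD A$-morphism; on $g_j$, using $dg_j\in T$, one has $q(dg_j)=p(dg_j)=d_B(q(g_j))$, which is exactly the defining condition $q(g_j)\in d_B^{-1}\{p\,d(g_j)\}$. Hence $q\circ d=d_B\circ q$ and $q$ is a $\tt DG\cD A$-morphism.

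The main obstacle I anticipate is not $d^2=0$ (which is the usual generators-only computation) but the interaction of $\cD$-linearity with the derivation property: because the product of a $\cD$-algebra is only $\cO$-bilinear and vector fields act as derivations rather than by scalars, one cannot argue ``$\cD$-linear on generators, hence $\cD$-linear'' directly. The commutator trick $[\theta,d]$ is what converts $\cD$-linearity into yet another ``derivation vanishing on generators'' statement, and the companion point requiring care is the sign bookkeeping that makes (\ref{DefRSADiff}) descend to the symmetric algebra $\cS V$.
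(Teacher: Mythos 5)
Your proof is correct and follows essentially the same route as the paper's: check that the Koszul signs make (\ref{DefRSADiff}) descend to $\cS V$, observe that the derivation property is built in, verify $\cD$-linearity by testing against vector fields, and deduce $d^2=0$ from the cycle condition $dg_j\in d_T^{-1}\{0\}$ (the paper's own proof states these steps tersely and calls them straightforward). Your commutator trick $[\theta,d]$ and the systematic use of ``a derivation vanishing on an algebra generating set vanishes identically'' are just a clean organization of the same verifications; the only point to state with a little more care is that these operators must be shown to vanish on all of $V=\bigoplus_j\cD\cdot g_j$, not merely on the $g_j$, which is exactly what the $\cD$-linear extension of $d$ to $V$ provides.
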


The reader might consider that the definition of $d(t\0 f)$, $f\in\cO$, is not an edge case of Definition (\ref{DefRSADiff}); if so, it suffices to add the definition $d(t\0 f)=d_T(t)\0 f\,.$ Note also that Definition (\ref{DefRSADiff}) is the only possible one. Indeed, denote the multiplication in $T\0\cS V$ (see Equation (13) in \cite{BPP1}) by $\diamond$ and choose, to simplify, $k=2$. Then, if $d$ is any differential that is compatible with the graded $\cD$-algebra structure of $T\0\cS V$, and coincides with $d_T(t)\0 1_\cO\simeq d_T(t)$ on any $t\0 1_\cO\simeq t\in T$ (since $(T,d_T)\to (T\boxtimes\cS V,d)$ must be a $\tt DG\cD A$-morphism) and with $d(v)\0 1_\cO\simeq d(v)$ on any $1_T\0 v\simeq v\in V$ (since $d(v)\in T$), we have necessarily
\bea & d(t\0 v_1\odot v_2)=\eea
\bea & d(t\0 1_\cO)\diamond (1_{T}\0 v_1)\diamond (1_{T}\0 v_2)+\\ &(-1)^p(t\0 1_\cO)\diamond d(1_{T}\0 v_1)\diamond (1_{T}\0 v_2)+\\ &(-1)^{p+n_1}(t\0 1_\cO)\diamond (1_{T}\0 v_1)\diamond d(1_{T}\0 v_2)=\eea

$$(d_T(t)\0 1_\cO)\diamond (1_{T}\0 v_1)\diamond (1_{T}\0 v_2)+$$ $$(-1)^p(t\0 1_\cO)\diamond (d(v_1)\0 1_\cO)\diamond (1_{T}\0 v_2)+$$ $$(-1)^{p+n_1}(t\0 1_\cO)\diamond (1_{T}\0 v_1)\diamond (d(v_2)\0 1_\cO)=$$

$$d_T(t)\0 v_1\odot v_2 + (-1)^p(t\ast d(v_1))\0 v_2+(-1)^{p+n_1n_2}(t\ast d(v_2))\0 v_1\;.$$
An analogous remark holds for Definition (\ref{DefRSAMorph}).

\begin{proof} It is easily checked that the {\small RHS} of Equation (\ref{DefRSADiff}) is graded symmetric in its arguments $v_i$ and $\cO$-linear with respect to all arguments. Hence, the map $d$ is a degree $-1$ $\cO$-linear map that is well-defined on $T\0\cS V$. To show that $d$ endows $T\0\cS V$ with a differential graded $\cD$-algebra structure, it remains to prove that $d$ squares to 0, is $\cD$-linear and is a graded derivation for $\diamond$. The last requirement follows immediately from the definition, for $\cD$-linearity it suffices to prove linearity with respect to the action of vector fields -- what is a straightforward verification --, whereas 2-nilpotency is a consequence of Condition (\ref{CondRSADiff}). The proof of (ii) is similar. \end{proof}

We are now prepared to give an example of a minimal non-split {\small RS$\cD$A}.

\begin{ex}\label{Lem1}\emph{Consider the generating cofibrations $\iota_n:S^{n-1}\to D^n$, $n\ge 1$, and $\iota_0:0\to S^0$ of the model structure of $\tt DG\cD M$. The {\it pushouts} of the induced generating cofibrations $$\psi_n=\cS(\iota_n)\quad\text{and}\quad \psi_0=\cS(\iota_0)$$ of the transferred model structure on $\tt DG\cD A$ are important instances of minimal non-split {\small RS$\cD$A}-s -- see Figure 2 and Equations (\ref{kappa}), (\ref{RSA-d}), (\ref{i}), (\ref{Cond2}), and (\ref{j}).} \end{ex}

\begin{proof} We first consider a pushout diagram for $\psi:=\psi_n$, for $n\ge 1$: see Figure \ref{PDiag},
\begin{figure}[h]
\begin{center}
\begin{tikzpicture}
  \matrix (m) [matrix of math nodes, row sep=3em, column sep=3em]
    {  \cS(S^{n-1}) & (T,d_T)  \\
       \cS(D^n) & \\ };
 \path[->]
 (m-1-1) edge  node[above] {$\scriptstyle{\zf}$} (m-1-2);
  \path[->]
 (m-1-1) edge  node[left] {$\scriptstyle{\psi}$} (m-2-1);
\end{tikzpicture}
\end{center}
\caption{Pushout diagram}\label{PDiag}
\end{figure}
where $(T,d_T)\in \tt DG\mathcal{D}A$ and where $\phi:(\cS(S^{n-1}),0)\to(T,d_T)$ is a $\tt DG\cD A$-morphism. \medskip

In the following, the generator of $S^{n-1}$ (resp., the generators of $D^n$) will be denoted by $1_{n-1}$ (resp., by $\mathbb{I}_n$ and $s^{-1}\mathbb{I}_n$, where $s^{-1}$ is the desuspension operator).\medskip

Note that, since $\cS(S^{n-1})$ is the free {\small DG$\cD$A} over the {\small DG$\cD$M} $S^{n-1}$, the $\tt DG\cD A$-morphism $\phi$ is uniquely defined by the $\tt DG\mathcal{D}M$-morphism $\phi|_{S^{n-1}}: S^{n-1}\to \op{For}(T,d_T)$, where $\op{For}$ is the forgetful functor. Similarly, since $S^{n-1}$ is, as {\small G$\cD$M}, free over its generator $1_{n-1}$, the restriction $\phi|_{S^{n-1}}$ is, as $\tt G\cD M$-morphism, completely defined by its value $\phi(1_{n-1})\in T_{n-1}$. The map $\phi|_{S^{n-1}}$ is then a $\tt DG\cD M$-morphism if and only if we choose \be\label{kappa}\zk_{n-1}:=\phi(1_{n-1})\in\ker_{n-1}d_T\;.\ee

We now define the pushout of $(\psi,\phi)$: see Figure \ref{CPD}. 
\begin{figure}[h]
\begin{center}
\begin{tikzpicture}
  \matrix (m) [matrix of math nodes, row sep=3em, column sep=3em]
    {  \cS(S^{n-1}) & (T,d_T)  \\
       \cS(D^n) & (T\boxtimes\cS(S^n),d)  \\ };
 \path[->]
 (m-1-2) edge  node[right] {$\scriptstyle{i}$} (m-2-2);
 \path[->]
 (m-1-1) edge  node[above] {$\scriptstyle{\zf}$} (m-1-2);
  \path[->]
 (m-1-1) edge  node[left] {$\scriptstyle{\psi}$} (m-2-1);
  \path[->]
 (m-2-1) edge  node[above] {$\scriptstyle{j}$} (m-2-2);
\end{tikzpicture}
\caption{Completed pushout diagram}\label{CPD}
\end{center}
\end{figure}
\noindent In the latter diagram, the differential $d$ of the {\small G$\cD$A} $T\boxtimes\cS(S^n)$ is defined as described in Lemma \ref{LemRSA}. Indeed, we deal here with the free non-negatively graded $\cD$-module $S^n=S^n_n=\cD\cdot 1_n$ and set $$d(1_n):=\zk_{n-1}=\zf(1_{n-1})\in\ker_{n-1}d_T\;.$$ Hence, if $x_\ell\simeq x_\ell\cdot 1_n\in\cD\cdot 1_n$, we get $d(x_\ell)=x_\ell\cdot\zk_{n-1}$, and, if $t\in T_p$, we obtain \be\label{RSA-d}d({t}\0 x_1\odot\ldots\odot x_k)=$$ $$d_T({t})\0 x_1\odot\ldots\odot x_k+(-1)^p\sum_{\ell=1}^k(-1)^{n(\ell-1)}({t}\ast (x_\ell\cdot\zk_{n-1}))\0 x_1\odot\ldots\widehat{\ell}\ldots\odot x_k\;,\ee see Equation (\ref{DefRSADiff}). Eventually the map \be\label{i}i:(T,d_T)\ni t\mapsto t\0 1_\cO\in (T\boxtimes\cS(S^n),d)\ee is a (minimal and non-split) {\small RS$\cD$A}.\medskip

Just as $\phi$, the $\tt DG\cD A$-morphism $j$ is completely defined if we define it as $\tt DG\cD M$-morphism on $D^n$. The choices of $j(\mathbb{I}_n)$ and $j(s^{-1}\mathbb{I}_{n})$ define $j$ as $\tt G\cD M$-morphism. The commutation condition of $j$ with the differentials reads \be\label{Cond1}j(s^{-1}\mathbb{I}_n)=d\,j(\mathbb{I}_n)\;:\ee only $j(\mathbb{I}_n)$ can be chosen freely in $(T\0\cS(S^n))_n\,$.\medskip

The diagram of Figure \ref{CPD} is now fully described. To show that it commutes, observe that, since the involved maps $\phi,i,\psi$, and $j$ are all $\tt DG\cD A$-morphisms, it suffices to check commutation for the arguments $1_\cO$ and $1_{n-1}$. Only the second case is non-obvious; we get the condition \be\label{Cond2}d\,j(\mathbb{I}_n)=\zk_{n-1}\0 1_\cO\;.\ee It is easily seen that the unique solution is \be\label{j}j(\mathbb{I}_n)=1_T\0 1_n\in(T\0\cS(S^{n}))_n\;.\ee

To prove that the commuting diagram of Figure \ref{CPD} is the searched pushout, it now suffices to prove its universality. Therefore, take $(B,d_B)\in\tt DG\cD A$, as well as two $\tt DG\cD A$-morphisms $i':(T,d_T)\to (B,d_B)$ and $j':\cS(D^n)\to(B,d_B)$, such that $j'\circ \psi=i'\circ\phi$, and show that there is a unique $\tt DG\mathcal{D}A$-morphism $\chi:(T\boxtimes \cS(S^n),d)\to (B,d_B)$, such that $\chi\circ i=i'$ and $\chi\circ j=j'$.\medskip

If $\chi$ exists, we have necessarily $$\chi(t\0 x_1\odot\ldots\odot x_k)=\chi((t\0 1_\cO)\diamond (1_T\0 x_1)\diamond \ldots\diamond (1_T\0 x_k))$$ \be\label{UP1}=\chi(i(t))\star \chi(1_T\0 x_1)\star\ldots\star \chi(1_T\0 x_k)\;,\ee where we used the same notation as above. Since any differential operator $x_i\simeq x_i\cdot 1_n$ is generated by functions and vector fields, we get \be\label{UP2}\chi(1_T\0 x_i)=\chi(1_T\0 x_i\cdot 1_n)=x_i\cdot \chi(1_T\0 1_n)=x_i\cdot \chi(j(\mathbb{I}_n))=x_i\cdot j'(\mathbb{I}_n)=j'(x_i\cdot\mathbb{I}_n)\;.\ee When combining (\ref{UP1}) and (\ref{UP2}), we see that, if $\chi$ exists, it is necessarily defined by \be\label{UP3}\chi(t\0 x_1\odot\ldots\odot x_k)=i'(t)\star j'(x_1\cdot\mathbb{I}_n)\star\ldots\star j'(x_k\cdot\mathbb{I}_n)\;.\ee This solves the question of uniqueness.\medskip

We now convince ourselves that (\ref{UP3}) defines a $\tt DG\cD A$-morphism $\chi$ (let us mention explicitly that we set in particular $\chi(t\0 f)=f\cdot i'(t)$, if $f\in\cO$). It is straightforwardly verified that $\chi$ is a well-defined $\cD$-linear map of degree 0 from $T\0\cS(S^n)$ to $B$, which respects the multiplications and the units. The interesting point is the chain map property of $\chi$. Indeed, consider, to simplify, the argument $t\0 x$, what will disclose all relevant insights. Assume again that $t\in T_p$ and $x\in S^n$, and denote the differential of $\cS(D^n)$, just as its restriction to $D^n$, by $s^{-1}$. It follows that $$d_B(\chi(t\0 x))=i'(d_T(t))\star j'(x\cdot\mathbb{I}_n)+(-1)^{p}\,i'(t)\star j'(x\cdot s^{-1}\mathbb{I}_n)\;.$$ Since $\psi(1_{n-1})=s^{-1}\mathbb{I}_n$ and $j'\circ\psi=i'\circ\phi$, we obtain $j'(s^{-1}\mathbb{I}_n)=i'(\zf(1_{n-1}))=i'(\zk_{n-1})$. Hence, $$d_B(\chi(t\0 x))=\chi(d_T(t)\0 x)+(-1)^{p}\,i'(t)\star i'(x\cdot\zk_{n-1})=$$ $$\chi(d_T(t)\0 x+(-1)^{p}t\ast(x\cdot\zk_{n-1}))=\chi(d(t\0 x))\;.$$ As afore-mentioned, no new feature appears, if we replace $t\0 x$ by a general argument.\medskip

As the conditions $\chi\circ i=i'$ and $\chi\circ j=j'$ are easily checked, this completes the proof of the statement that any pushout of any $\psi_n$, $n\ge 1$, is a minimal non-split {\small RS$\cD$A}.\medskip

The proof of the similar claim for $\psi_0$ is analogous and even simpler, and will not be detailed here.\end{proof}

Actually pushouts of $\psi_0$ are border cases of pushouts of the $\psi_n$-s, $n\ge 1$. In other words, to obtain a pushout of $\psi_0$, it suffices to set, in Figure \ref{CPD} and in Equation (\ref{RSA-d}), the degree $n$ to 0. Since we consider exclusively non-negatively graded complexes, we then get $\cS(S^{-1})=\cS(0)=\cO$, $\cS(D^0)=\cS(S^0)$, and $\zk_{-1}=0$.

\subsection{$\tt DG\cD A$-cofibrations}

The following theorem characterizes the cofibrations of the cofibrantly generated model structure we constructed on $\tt DG\mathcal{D}A$.

\begin{theo}\label{Cof} The $\tt DG\mathcal{D}A$-cofibrations are exactly the retracts of the relative Sullivan $\cD$-algebras. \end{theo}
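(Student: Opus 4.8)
The plan is to exploit the standard characterization of cofibrations in a cofibrantly generated model category. By Theorem \ref{FinGenModDGDA} the structure on $\tt DG\cD A$ is cofibrantly generated with generating cofibrations $\cS(I)=\{\psi_n=\cS(\iota_n)\}$, and for any such structure the cofibrations are exactly the retracts of relative $\cS(I)$-cell complexes, i.e. of transfinite compositions of pushouts of maps in $\cS(I)$ (see \cite{Hov}). Since the class of cofibrations is closed under retracts, the theorem reduces to identifying relative $\cS(I)$-cell complexes with relative Sullivan $\cD$-algebras. Concretely, I would prove (a) every relative $\cS(I)$-cell complex is an {\small RS$\cD$A}, which yields that every cofibration is a retract of an {\small RS$\cD$A}; and (b) every {\small RS$\cD$A} is a relative $\cS(I)$-cell complex, hence a cofibration, so that any of its retracts is a cofibration as well.

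For (a), consider a relative $\cS(I)$-cell complex $A=X_0\to X_1\to\cdots\to\colim_\lambda X_\lambda=B$, where each $X_\lambda\to X_{\lambda+1}$ is a pushout of a coproduct of generating cofibrations $\psi_n$. By refining the filtration I may assume a single cell is attached at each successor step, the boundaries of distinct cells of one coproduct landing independently in the current stage $X_\lambda$. Example \ref{Lem1} then identifies each elementary step with the adjunction of one free generator $g_\alpha$ of degree $n_\alpha$, so that $X_{\lambda+1}=X_\lambda\boxtimes\cS(\cD\cdot g_\alpha)$ with $d g_\alpha=\kappa_\alpha$ a cycle of $X_\lambda$ in degree $n_\alpha-1$. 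Collecting all generators produced along the transfinite process yields a free graded $\cD$-module $V=\bigoplus_\alpha\cD\cdot g_\alpha$ whose index set inherits a well-ordering from the filtration, and one checks $B\simeq A\boxtimes\cS V$; the lowering property $d g_\alpha\in A\boxtimes\cS V_{<\alpha}$ holds because the attaching map of $g_\alpha$ factors through the current stage $X_\lambda$, which is precisely $A\boxtimes\cS V_{<\alpha}$. Thus $A\to B$ is an {\small RS$\cD$A}.

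For (b), let $(A,d_A)\to(A\boxtimes\cS V,d)$ be an {\small RS$\cD$A} with well-ordered homogeneous basis $(g_\alpha)_{\alpha\in J}$ satisfying (\ref{Lowering}). I would realize it as a transfinite composition indexed by $J$: set $X_\alpha:=A\boxtimes\cS V_{<\alpha}$, with $X_\alpha=\colim_{\beta<\alpha}X_\beta$ at limit ordinals. At a successor stage the lowering condition gives $d g_\alpha\in(A\boxtimes\cS V_{<\alpha})_{n_\alpha-1}$, and since $d^2=0$ this element is a $d$-cycle of the base $X_\alpha$; hence Lemma \ref{LemRSA}(i), applied with base $(X_\alpha,d)$ and the single symbol $g_\alpha$, exhibits $X_\alpha\to X_\alpha\boxtimes\cS(\cD\cdot g_\alpha)=X_{\alpha+1}$ as exactly the pushout of $\psi_{n_\alpha}$ described in Example \ref{Lem1}. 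Therefore $A\to A\boxtimes\cS V$ is a relative $\cS(I)$-cell complex, in particular a cofibration; and as cofibrations are stable under retracts, every retract of an {\small RS$\cD$A} is a cofibration.

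I expect the main obstacle to lie in the bookkeeping of part (a): verifying that the colimit of the one-generator extensions is globally of the form $A\boxtimes\cS V$ with $V$ free, that a coproduct of simultaneously attached cells can be serialized without the boundaries interacting, and that the well-order on $J$ induced by the ordinal filtration remains compatible with the lowering condition across limit stages. Once these compatibilities are secured, the differential assembled stage by stage is automatically the lowering differential, and the remainder is a direct transcription of Example \ref{Lem1} and Lemma \ref{LemRSA} into the transfinite setting.
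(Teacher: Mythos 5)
Your proposal is correct and follows essentially the same route as the paper: reduce via the cofibrantly generated structure to identifying retracts of transfinite compositions of pushouts of the $\psi_n$ with retracts of {\small RS$\cD$A}-s, then prove both inclusions by transfinite induction using the pushout description of Example \ref{Lem1} and Lemma \ref{LemRSA}. The ``bookkeeping'' you flag in part (a) is exactly what the paper's Lemma \ref{Lem4} carries out (computing $A_\za\simeq A_0\otimes\cS\langle a_\zd:\zd\le\za,\zd\in\frak{O}_s\rangle$, the lowering property, and the behaviour at limit ordinals via direct limits), and your part (b) matches the paper's converse construction step for step.
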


We first prove the following lemma.

\begin{lem} The $\tt DG\mathcal{D}A$-cofibrations are exactly the retracts of the transfinite compositions of pushouts of generating cofibrations $$\psi_n:\cS(S^{n-1})\to\cS(D^n),\quad n\ge 0\;.$$\end{lem}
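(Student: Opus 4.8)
The plan is to recognize this lemma as the general characterization of cofibrations in a cofibrantly generated model category, specialized to the finitely generated model structure on $\tt DG\cD A$ furnished by Theorem \ref{FinGenModDGDA}. By definition a $\tt DG\cD A$-cofibration is a morphism with the left lifting property against all trivial fibrations; and since the structure is cofibrantly generated by $\cS(I)=\{\psi_n:n\ge 0\}$, where $\psi_n=\cS(\iota_n)$ and $I=\{\iota_n:n\ge 0\}$, the trivial fibrations are precisely the morphisms with the right lifting property against every $\psi_n$. These two facts are the input I would use to convert the abstract lifting description of cofibrations into the explicit cellular one.

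First I would dispose of the easy inclusion. Each $\psi_n$ is a generating cofibration, hence a cofibration; and in any model category the class of cofibrations is closed under pushout, transfinite composition, and retract. Consequently every retract of a transfinite composition of pushouts of the $\psi_n$ is again a cofibration, which gives one inclusion at once.

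For the reverse inclusion I would invoke the small object argument, available here because the structure is finitely (hence cofibrantly) generated, so that the domains $\cS(S^{n-1})$ of the $\psi_n$ are small. Given an arbitrary cofibration $f\colon(A,d_A)\to(B,d_B)$, the small object argument factors it as $f=p\circ i$, where $i$ is a relative $\cS(I)$-cell complex (a transfinite composition of pushouts of the $\psi_n$) and $p$ has the right lifting property against $\cS(I)$, i.e.\ is a trivial fibration. Since $f$ is a cofibration and $p$ a trivial fibration, the commuting square with sides $i,\,p$ on the one hand and $f,\,\id_B$ on the other admits a lift $g$; the diagram assembled from $\id_A,\,g,\,\id_A,\,p$ then exhibits $f$ as a retract of $i$ by the classical retract argument. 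Thus $f$ is a retract of a transfinite composition of pushouts of the $\psi_n$, as required.

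The one point demanding care -- and the step I expect to be the main obstacle -- is reconciling the two notions of cell complex. The small object argument naturally produces, at each successor stage, a pushout of a \emph{coproduct} $\coprod_{s\in\Sigma}\psi_{n_s}$ indexed by the lifting squares at that stage, whereas the statement of the lemma speaks of pushouts of \emph{single} generating cofibrations $\psi_n$. I would eliminate this discrepancy by the standard observation that a pushout of a coproduct $\coprod_{s\in\Sigma}\psi_{n_s}$ rewrites, after well-ordering $\Sigma$, as a transfinite composition of pushouts of the individual $\psi_{n_s}$ (adjoining one cell at a time); splicing these refinements over all stages turns the coproduct-pushout tower into a genuine transfinite composition of pushouts of single $\psi_n$'s without altering the underlying morphism $i$. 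This matches the output of the small object argument with the formulation of the lemma and completes the argument.
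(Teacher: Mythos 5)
Your proof is correct and is essentially the same argument as the paper's: the paper simply cites \cite[Proposition 2.1.18]{Hov} (every cofibration is a retract of an $I$-cell) together with $I\text{-}\op{cell}\subset\op{Cof}$ and closure of cofibrations under retracts, whereas you unfold the proof of that cited proposition via the small object argument and the retract argument. Your extra care about rewriting a pushout of a coproduct as a transfinite composition of pushouts of single $\psi_n$'s is the content of \cite[Lemma 2.1.13]{Hov} and is handled correctly.
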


\begin{proof} For concise additional information on model categories, we refer to \cite[Appendices 8.4 and 8.6]{BPP1}.

In any cofibrantly generated model category $\tt M$ with generating cofibrations $I$, every cofibration is a retract of an $I$-cell \cite[Proposition 2.1.18]{Hov}. Moreover, in view of \cite[Lemma 2.1.10]{Hov}, we have \be\label{SpecCof}I\text{-}\op{cell}\subset\op{LLP}(\op{RLP}(I))=\op{Cof}\;.\ee Since cofibrations are closed under retracts, it follows that any retract of an $I$-cell is a cofibration. Hence, cofibrations are exactly the retracts of the $I$-cells, i.e., the retracts of the transfinite compositions of pushouts of elements of $I$. For $\tt M = DG\cD A$, we thus find that the cofibrations are the retracts of the transfinite compositions of pushouts of $\psi_n$-s, $n\ge 0\,.$\end{proof}

The proof of Theorem \ref{Cof} thus reduces to the proof of

\begin{theo}\label{Reduction} The transfinite compositions of pushouts of $\psi_n$-s, $n\ge 0$, are exactly the relative Sullivan $\cD$-algebras.\end{theo}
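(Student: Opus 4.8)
The plan is to prove the two inclusions separately, using the single-step computation of Example~\ref{Lem1} as the engine in both directions: pushing out $\psi_n$ along a morphism $\phi$ is precisely the operation of adjoining one free generator $g$ of degree $n$ with $dg=\phi(1_{n-1})$, and the resulting differential on $T\boxtimes\cS(S^n)$ is the one prescribed in Lemma~\ref{LemRSA}. Everything below is a transfinite induction threaded through this basic block; I keep all colimits as filtered colimits so that $\cS$ and $\0_\cO$ can be commuted past them.

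First I would show that every relative Sullivan $\cD$-algebra is a transfinite composition of pushouts of the $\psi_n$. Let $(A,d_A)\to(A\boxtimes\cS V,d)$ be a {\small RS$\cD$A} with homogeneous basis $(g_\za)_{\za\in J}$ of $V$, $J$ well-ordered, satisfying the lowering condition $dg_\za\in A\boxtimes\cS V_{<\za}$. Fix the order-isomorphism of $J$ with an ordinal $\zl$ and set, for each $\zb\le\zl$, the partial stage $X_\zb:=A\boxtimes\cS V_{<\zb}$, so that $X_0=A$ and $X_\zl=A\boxtimes\cS V$. At a successor $\zb+1$ the inclusion $X_\zb\hookrightarrow X_{\zb+1}$ adjoins the single generator $g_\zb$, whose $\cD$-span $\cD\cdot g_\zb$ is isomorphic as a $\tt G\cD M$ to the sphere $S^{n_\zb}$ with $n_\zb=\deg g_\zb$. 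Since $d^2=0$ forces $dg_\zb\in\ker_{n_\zb-1}d\subset (X_\zb)_{n_\zb-1}$, the map $\phi_\zb:\cS(S^{n_\zb-1})\to X_\zb$ determined by $1_{n_\zb-1}\mapsto dg_\zb$ is a $\tt DG\cD A$-morphism meeting Condition~(\ref{kappa}), and Example~\ref{Lem1} identifies $X_{\zb+1}$ with the pushout of $\psi_{n_\zb}$ along $\phi_\zb$, differential included. At a limit ordinal $\zm$ one has $V_{<\zm}=\dlim_{\zb<\zm}V_{<\zb}$, whence $A\boxtimes\cS V_{<\zm}=\dlim_{\zb<\zm}(A\boxtimes\cS V_{<\zb})$ because the colimit is filtered and $\cS$ together with $\0_\cO$ preserves filtered colimits; the differentials glue since each is the $\cD$-linear derivation fixed by its values on the $g_\zb$. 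Thus $(X_\zb)_{\zb\le\zl}$ is a $\zl$-sequence of pushouts of $\psi_n$-s whose composition is the given {\small RS$\cD$A}.

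Conversely, I would show that any transfinite composition of pushouts of $\psi_n$-s is a {\small RS$\cD$A}. Let $(X_\zb)_{\zb\le\zl}$ be such a $\zl$-sequence starting at $X_0=(A,d_A)$, each successor $X_\zb\to X_{\zb+1}$ a pushout of some $\psi_{n_\zb}$. By Example~\ref{Lem1} every successor step adjoins a single free generator $g_\zb$ of degree $n_\zb$ with $dg_\zb=\zk_\zb\in X_\zb$, so $X_{\zb+1}=X_\zb\boxtimes\cS(\cD\cdot g_\zb)$, while at limits $X_\zb$ is the colimit of the preceding stages. Put $V:=\bigoplus_{\zb<\zl}\cD\cdot g_\zb$ and order its basis $(g_\zb)$ by the ordinals $\zb$, a well-ordering. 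An induction on $\zb$ then gives $X_\zb=A\boxtimes\cS V_{<\zb}$, so the composite $X_0\to X_\zl=A\boxtimes\cS V$ sends $a\mapsto a\0 1$, and the lowering condition is immediate from $dg_\zb=\zk_\zb\in X_\zb=A\boxtimes\cS V_{<\zb}$. Hence the transfinite composition is a {\small RS$\cD$A}.

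I expect the main obstacle to be the bookkeeping at limit ordinals: identifying $\dlim_{\zb<\zm}(A\boxtimes\cS V_{<\zb})$ with $A\boxtimes\cS V_{<\zm}$ as a \emph{differential} graded $\cD$-algebra, not merely as a graded $\cD$-module. This rests on the symmetric algebra functor $\cS$ and the tensor product $\0_\cO$ preserving the filtered colimits in play, so that the underlying $\tt G\cD A$ is correct, and on the observation that the differentials, being $\cD$-linear derivations pinned down by their values on the generators $g_\zb$, are compatible across the sequence and so descend to the colimit. The border case $n=0$ (pushouts of $\psi_0$, with $\zk_{-1}=0$, adjoining a degree-$0$ generator with vanishing differential) is absorbed exactly as in the remark following Example~\ref{Lem1}, so it requires no separate treatment.
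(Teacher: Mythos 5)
Your proposal is correct and follows essentially the same route as the paper: both directions hinge on identifying a pushout of $\psi_n$ with the one-generator extension of Example \ref{Lem1}, combined with a transfinite induction along the filtration $A\boxtimes\cS V_{<\zb}$ and the identification of the limit-ordinal stages with set-theoretic unions (filtered colimits). The only point the paper makes more explicit is the verification that the pushout differential of Equation (\ref{RSA-d}) coincides with the restriction of the ambient differential $d$ at each successor stage, which you subsume into the uniqueness of the derivation determined by its values on the generators.
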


\begin{lem}\label{DirSumTenPro} For any $M,N\in\tt DG\cD M$, we have $$\cS(M\oplus N)\simeq \cS M\0 \cS N\;$$ in $\tt DG\cD A\,.$\end{lem}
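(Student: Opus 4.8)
The plan is to deduce the isomorphism from the fact that $\cS$ is a left adjoint, together with the identification of coproducts in the two categories at hand. Since $\cS:{\tt DG\cD M}\rightleftarrows{\tt DG\cD A}:\op{For}$ is a left adjoint (the preceding proposition), it preserves all colimits, and in particular coproducts. The direct sum $M\oplus N$ is exactly the coproduct of $M$ and $N$ in ${\tt DG\cD M}$, so it suffices to show that the coproduct of two objects of ${\tt DG\cD A}$ is given by their tensor product $\0$ over $\cO$. Granting this, we obtain
\[
\cS(M\oplus N)\;\simeq\;\cS M\0\cS N
\]
naturally in $M$ and $N$, the isomorphism being automatically a ${\tt DG\cD A}$-morphism (hence compatible with the induced differentials), since $\cS$ is a functor on the whole differential graded category.

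The main point -- and the only place where the special nature of $\cD$-algebras intervenes -- is therefore to verify that $\cS M\0\cS N$, equipped with the componentwise multiplication and the two canonical maps $\cS M\ni a\mapsto a\0 1$ and $\cS N\ni b\mapsto 1\0 b$, satisfies the universal property of the coproduct in ${\tt DG\cD A}$. Recall that a ${\tt DG\cD A}$ is a commutative monoid in $({\tt DG\cD M},\0_\cO,\cO)$, so one is really proving the standard fact that, in the category of commutative monoids of a symmetric monoidal category, the coproduct is the tensor product. Concretely, given any $(B,d_B)\in{\tt DG\cD A}$ and ${\tt DG\cD A}$-morphisms $f:\cS M\to B$ and $g:\cS N\to B$, the map $a\0 b\mapsto f(a)\cdot g(b)$ is the unique ${\tt DG\cD A}$-morphism $\cS M\0\cS N\to B$ restricting to $f$ and $g$; one checks it is well defined, multiplicative, unital, $\cD$-linear, and a chain map. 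The care required here is exactly that flagged in the introduction: the multiplication on $\cS M\0\cS N$ is only $\cO$-bilinear and vector fields act as derivations, so the verification of $\cD$-linearity reduces to checking the Leibniz rule for the action of vector fields on a product $(a\0 1)\diamond(1\0 b)$, which is a direct computation.

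Alternatively, one may bypass the abstract coproduct statement and construct the isomorphism by hand, which makes the compatibility with differentials fully explicit. Using the universal property of $\cS$, the inclusion $M\oplus N\to\op{For}(\cS M\0\cS N)$, $(m,n)\mapsto (m\0 1)+(1\0 n)$, extends to a ${\tt DG\cD A}$-morphism $\Phi:\cS(M\oplus N)\to\cS M\0\cS N$; in the other direction, the two inclusions $M\hookrightarrow M\oplus N$ and $N\hookrightarrow M\oplus N$ induce ${\tt DG\cD A}$-morphisms $\cS M\to\cS(M\oplus N)$ and $\cS N\to\cS(M\oplus N)$ whose images commute (as $\cS(M\oplus N)$ is graded-commutative), hence assemble into $\Psi:\cS M\0\cS N\to\cS(M\oplus N)$. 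It then remains to check $\Phi\circ\Psi=\id$ and $\Psi\circ\Phi=\id$, and since both composites are ${\tt DG\cD A}$-morphisms it suffices to verify these identities on the generators $m\in M$ and $n\in N$, where they are immediate. I expect the only genuine obstacle in either route to be the bookkeeping of signs and the $\cD$-linearity check; the structural content is the standard statement that the symmetric algebra functor sends coproducts to tensor products.
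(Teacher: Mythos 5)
Your proposal is correct and follows essentially the same route as the paper: the paper's proof likewise invokes that $\cS$ is a left adjoint (hence preserves coproducts) and that the binary coproduct is $\oplus$ in ${\tt DG\cD M}$ and $\0$ in ${\tt DG\cD A}={\tt CMon(DG\cD M)}$. Your additional verification of the coproduct property and the alternative explicit construction are sound elaborations of facts the paper simply cites.
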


\begin{proof} 
It 
suffices to remember that the binary coproduct in the category $\tt DG\cD M=Ch_+(\cD)$ (resp., the category $\tt DG\cD A=CMon(DG\cD M)$) of non-negatively graded chain complexes of $\cD$-modules (resp., the category of commutative monoids in $\tt DG\cD M$) is the direct sum (resp., the tensor product). The conclusion then follows from the facts that $\cS$ is the left adjoint of the forgetful functor and that any left adjoint commutes with colimits.\end{proof}

Any ordinal is zero, a successor ordinal, or a limit ordinal. We denote the class of all successor ordinals (resp., all limit ordinals) by $\frak{O}_s$ (resp., $\frak{O}_\ell$).

\begin{proof}[Proof of Theorem \ref{Reduction}] (i) Consider an ordinal $\zl$ and a $\zl$-sequence in $\tt DG\cD A$, i.e., a colimit respecting functor $X:\zl\to \tt DG\cD A$ (here $\zl$ is viewed as the category whose objects are the ordinals $\za<\zl$ and which contains a unique morphism $\za\to\zb$ if and only if $\za\le\zb$): $$X_0\to X_1\to \ldots \to X_n\to X_{n+1}\to\ldots X_\zw\to X_{\zw+1}\to \ldots \to X_\za\to X_{\za+1}\to \ldots$$ We assume that, for any $\za$ such that $\za+1<\zl$, the morphism $X_\za\to X_{\za+1}$ is a pushout of some $\psi_{n_{\za+1}}$ ($n_{\za+1}\ge 0$). Then the morphism $X_0\to \op{colim}_{\za<\zl}X_\za$ is exactly what we call a transfinite composition of pushouts of $\psi_n$-s. Our task is to show that this morphism is a {\small RS$\cD$A}.\medskip

We first compute the terms $X_\za$, $\za<\zl,$ of the $\zl$-sequence, then we determine its colimit. For $\za<\zl$ (resp., for $\za<\zl, \za\in{\frak O}_s$), we denote the differential graded $\cD$-algebra $X_\za$ (resp., the $\tt DG\cD A$-morphism $X_{\za-1}\to X_{\za}$) by $(A_\za,d_\za)$ (resp., by $X_{\za,\za-1}:(A_{\za-1},d_{\za-1})\to (A_{\za},d_{\za})$). Since $X_{\za,\za-1}$ is the pushout of some $\psi_{n_{\za}}$ and some $\tt DG\cD A$-morphism $\zf_{\za}$, its target algebra is of the form \be\label{Successor}(A_{\za},d_{\za})=(A_{\za-1}\boxtimes\cS\langle a_{\za}\ra,d_{\za})\;\ee and $X_{\za,\za-1}$ is the canonical inclusion \be\label{SuccessorMorph}X_{\za,\za-1}:(A_{\za-1},d_{\za-1})\ni \frak{a}_{\za-1}\mapsto \frak{a}_{\za-1}\0 1_\cO\in (A_{\za-1}\boxtimes\cS\langle a_\za\rangle,d_\za)\;,\ee see Example \ref{Lem1}. Here $a_{\za}$ is the generator $1_{n_{\za}}$ of $S^{n_{\za}}$ and $\langle a_{\za}\ra$ is the free non-negatively graded $\cD$-module $S^{n_{\za}}=\cD\cdot a_{\za}$ concentrated in degree $n_{\za}$; further, the differential \be\label{RSA-d-2}d_{\za}\;\;\text{is defined by (\ref{RSA-d}) from}\;\; d_{\za-1}\;\;\text{and}\;\;\zk_{n_{\za}-1}:=\zf_{\za}(1_{n_{\za}-1})\;.\ee In particular, $A_1=A_0\boxtimes\cS\langle a_1\ra\,,$ $d_1(a_1)=\zk_{n_1-1}=\zf_1(1_{n_1-1})\in A_0\,,$ and $X_{10}:A_0\to A_1$ is the inclusion.\medskip

\begin{lem}\label{Lem4} For any $\za<\zl$, we have
\be\label{Lem41}A_{\za}\simeq A_0\otimes \cS \langle a_\delta: \delta\leq\za, \zd\in\frak{O}_s\rangle\;\ee as a graded $\cD$-algebra,
and
\be\label{Lem42}d_{\za}(a_{\delta})\in A_0\otimes \cS \langle  a_\ze: \ze< \delta, \ze\in\frak{O}_s\rangle\;,\ee
for all $\delta\leq \za$, $\zd\in\frak{O}_s$. Moreover, for any $\zg\le\zb\le \za <\zl$, we have $$A_\zb=A_\zg\0\cS\langle a_\zd:\zg<\zd\le\zb,\zd\in{\frak O}_s\rangle$$ and the $\tt DG\cD A$-morphism $X_{\zb\zg}$ is the natural inclusion \be\label{Lem43}X_{\zb\zg}: (A_\zg,d_\zg)\ni \frak{a}_\zg\mapsto \frak{a}_\zg\0 1_\cO\in (A_\zb,d_\zb)\;.\ee Since the latter statement holds in particular for $\zg=0$ and $\zb=\za$, the $\tt DG\cD A$-inclusion $X_{\za 0}:(A_0,d_0)\to (A_\za,d_\za)$ is a {\small RS$\cD$A} $(\,$for the natural ordering of $\{a_\delta: \zd\le\za,\zd\in\frak{O}_s\}\,).$\end{lem}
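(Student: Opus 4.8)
The plan is to proceed by transfinite induction on $\za<\zl$, proving simultaneously the graded-algebra isomorphism (\ref{Lem41}), the lowering property (\ref{Lem42}) of $d_\za$, and the relative decomposition $A_\zb=A_\zg\0\cS\langle a_\zd:\zg<\zd\le\zb,\zd\in\frak{O}_s\rangle$ with $X_{\zb\zg}$ the canonical inclusion (for all $\zg\le\zb\le\za$). Carrying the ``moreover'' part \emph{inside} the induction hypothesis is essential: it is precisely the identification of the transition maps $X_{\zb\zg}$ as canonical inclusions $\frak{a}\mapsto\frak{a}\0 1_\cO$ that will let the colimit at limit stages be computed. The base case $\za=0$ is trivial, since the index set $\{\zd\le 0:\zd\in\frak{O}_s\}$ is empty and $A_0\simeq A_0\0\cS\langle\,\rangle=A_0\0\cO\simeq A_0$, while (\ref{Lem42}) is vacuous.

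For the successor case $\za=\zb+1\in\frak{O}_s$, I would invoke the description of the successor step recalled in (\ref{Successor})--(\ref{RSA-d-2}) and established in Example \ref{Lem1}: one has $A_\za=A_{\za-1}\boxtimes\cS\langle a_\za\rangle$ as a {\small G$\cD$A}, where $\boxtimes$ and $\0$ agree, the box only recording that the differential is non-split. The induction hypothesis gives $A_{\za-1}\simeq A_0\0\cS\langle a_\zd:\zd\le\zb,\zd\in\frak{O}_s\rangle$, and Lemma \ref{DirSumTenPro} ($\cS(M\oplus N)\simeq\cS M\0\cS N$) absorbs the new factor $\cS\langle a_\za\rangle$, yielding (\ref{Lem41}) for $\za$, the generator set gaining exactly the successor $\za$. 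For (\ref{Lem42}) I would use that $d_\za$ restricts to $d_{\za-1}$ on $A_{\za-1}$: for $\zd\le\zb$ the claim is the induction hypothesis, and for $\zd=\za$ one has $d_\za(a_\za)=\zk_{n_\za-1}\in A_{\za-1}=A_0\0\cS\langle a_\ze:\ze<\za,\ze\in\frak{O}_s\rangle$ by (\ref{RSA-d-2}), using that the successors $\le\zb$ are exactly the successors $<\za$. The relative decomposition follows by cancelling the common tensor factor, and $X_{\za\zg}$ is the composite of the canonical inclusions $X_{\zd,\zd-1}$ of (\ref{SuccessorMorph}), hence again a canonical inclusion.

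For the limit case $\za\in\frak{O}_\ell$, I would compute $A_\za=\colim_{\zb<\za}A_\zb$ directly. By the induction hypothesis the system $(A_\zb)_{\zb<\za}$ is a directed system of canonical inclusions $A_\zg\hookrightarrow A_\zb=A_\zg\0\cS\langle a_\zd:\zg<\zd\le\zb,\zd\in\frak{O}_s\rangle$; since $A_0\0(-)$ and $\cS$ are left adjoints and therefore commute with colimits, and since $\bigoplus_{\zd<\za}\cD\cdot a_\zd=\colim_{\zb<\za}\bigoplus_{\zd\le\zb}\cD\cdot a_\zd$, the colimit is $A_0\0\cS\langle a_\zd:\zd<\za,\zd\in\frak{O}_s\rangle$. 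As $\za$ is a limit ordinal it does not belong to $\frak{O}_s$, so $\{\zd\le\za:\zd\in\frak{O}_s\}=\{\zd<\za:\zd\in\frak{O}_s\}$ and (\ref{Lem41}) follows. The differential $d_\za$ is the induced colimit differential; for any successor $\zd\le\za$ one has $\zd<\za$, hence $\zd\le\zb$ for some $\zb<\za$ and $d_\za(a_\zd)=d_\zb(a_\zd)$ lands where (\ref{Lem42}) requires. The relative decompositions and inclusion property pass to the colimit in the same way. Specializing to $\zg=0$, $\zb=\za$ then exhibits $X_{\za 0}\colon(A_0,d_0)\to(A_\za,d_\za)$ as the canonical inclusion onto $A_0\boxtimes\cS\langle a_\zd:\zd\le\za,\zd\in\frak{O}_s\rangle$ with differential lowering by (\ref{Lem42}); with the generators well-ordered by their ordinal index this is exactly a {\small RS$\cD$A}. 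The main obstacle I anticipate is the limit-ordinal bookkeeping: one must verify that the directed system genuinely consists of the canonical inclusions (so that its colimit is the expected ``union'' symmetric algebra), that commuting $\cS$ and $A_0\0(-)$ past the colimit is legitimate, and that the colimit differential retains the lowering property --- all of which rely on having threaded the relative decomposition through the induction hypothesis.
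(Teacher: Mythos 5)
Your proposal is correct and follows essentially the same route as the paper: a transfinite induction that carries the identification of the transition maps $X_{\zb\zg}$ as canonical inclusions inside the hypothesis, handles successors via (\ref{Successor})--(\ref{RSA-d-2}) and Lemma \ref{DirSumTenPro}, and computes the limit stage as the colimit of a directed system of inclusions. The only cosmetic difference is at limit ordinals, where the paper identifies that colimit concretely as a set-theoretic union (invoking the result from \cite{BPP1} that such direct limits in $\tt DG\cD A$ are computed in $\tt Set$), whereas you argue via $\cS$ and $A_0\0(-)$ commuting with colimits; both yield the same conclusion.
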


\begin{proof}[Proof of Lemma \ref{Lem4}] To prove that this claim (i.e., Equations (\ref{Lem41}) -- (\ref{Lem43})) is valid for all ordinals that are smaller than $\zl$, we use a transfinite induction. Since the assertion obviously holds for $\za=1,$ it suffices to prove these properties for $\za<\zl$, assuming that they are true for all $\zb<\za$. We distinguish (as usually in transfinite induction) the cases $\za\in\frak{O}_s$ and $\za\in\frak{O}_\ell$.\medskip

If $\za\in\frak{O}_s$, it follows from Equation (\ref{Successor}), from the induction assumption, and from Lemma \ref{DirSumTenPro}, that $$A_\za=A_{\za-1}\0\cS\langle a_\za\ra\simeq A_0\0 \cS\langle a_\zd: \zd\le \za,\zd\in\frak{O}_s\ra\;,$$ as graded $\cD$-algebra. Further, in view of Equation (\ref{RSA-d-2}) and the induction hypothesis, we get $$d_\za(a_\za)=\zf_{\za}(1_{n_\za-1})\in A_{\za-1}=A_0\0 \cS\langle a_\zd: \zd<\za,\zd\in\frak{O}_s\ra\;,$$ and, for $\zd\le\za-1$, $\zd\in\frak{O}_s$, $$d_\za(a_\zd)=d_{\za-1}(a_\zd)\in A_0\otimes \cS \langle  a_\zg: \zg< \delta, \zg\in\frak{O}_s\rangle\;.$$ Finally, as concerns $X_{\zb\zg}$, the unique case to check is $\zg\le\za-1$ and $\zb=\za$. The $\tt DG\cD A$-map $X_{\za-1,\zg}$ is an inclusion $$X_{\za-1,\zg}: A_\zg\ni \frak{a}_\zg\mapsto \frak{a}_\zg\0 1_\cO\in A_{\za-1}\;$$ (by induction), and so is the $\tt DG\cD A$-map $$X_{\za,\za-1}:A_{\za-1}\ni \frak{a}_{\za-1}\mapsto \frak{a}_{\za-1}\0 1_\cO\in A_\za\;$$ (in view of (\ref{SuccessorMorph})). The composite $X_{\za\zg}$ is thus a $\tt DG\cD A$-inclusion as well.\medskip

In the case $\za\in\frak{O}_\ell$, i.e., $\za=\op{colim}_{\zb<\za}\zb$, we obtain $(A_\za,d_\za)=\op{colim}_{\zb<\za}(A_\zb,d_\zb)$ in $\tt DG\cD A$, since $X$ is a colimit respecting functor. The index set $\za$ is well-ordered, hence, it is a directed poset. Moreover, for any $\zd\le\zg\le\zb<\za$, the $\tt DG\cD A$-maps $X_{\zb\zd}$, $X_{\zg\zd}$, and $X_{\zb\zg}$ satisfy $X_{\zb\zd}=X_{\zb\zg}\circ X_{\zg\zd}\,$. It follows that the family $(A_\zb,d_\zb)_{\zb<\za},$ together with the family $X_{\zb\zg}$, $\zg\le\zb<\za$, is a direct system in $\tt DG\cD A$, whose morphisms are, in view of the induction assumption, natural inclusions $$X_{\zb\zg}:A_\zg\ni \frak{a}_\zg\mapsto \frak{a}_\zg\0 1_\cO\in A_\zb\;.$$ The colimit $(A_\za,d_\za)=\op{colim}_{\zb<\za}(A_\zb,d_\zb)$ is thus a direct limit. We proved in \cite{BPP1} that a direct limit in $\tt DG\cD A$ coincides with the corresponding direct limit in $\tt DG\cD M$, or even in $\tt Set$ (which is then naturally endowed with a differential graded $\cD$-algebra structure). As a set, the direct limit $(A_\za,d_\za)=\op{colim}_{\zb<\za}(A_\zb,d_\zb)$ is given by $$A_\za=\coprod_{\zb<\za}A_\zb/\sim\;,$$ where $\sim$ means that we identify $\frak{a}_\zg$, $\zg\le\zb$, with $$\frak{a}_\zg\sim X_{\zb\zg}(\frak{a}_\zg)=\frak{a}_\zg\0 1_\cO\;,$$ i.e., that we identify $A_\zg$ with $$A_\zg\sim A_\zg\0\cO\subset A_\zb\;.$$ It follows that $$A_\za=\bigcup_{\zb<\za}A_\zb=A_0\0\cS\langle a_\zd:\zd<\za,\zd\in\frak{O}_s\ra=A_0\0\cS\langle a_\zd:\zd\le\za,\zd\in\frak{O}_s\ra\;.$$ As just mentioned, this set $A_\za$ can naturally be endowed with a differential graded $\cD$-algebra structure. For instance, since, in view of what has been said, all $\sim\,$-$\,$classes consist of a single element, and since any $\frak{a}_\za\in A_\za$ belongs to some $A_\zb$, $\zb<\za$, the differential $d_\za$ is defined by $d_\za(\frak{a}_\za)=d_\zb(\frak{a}_\za)$. In particular, any generator $a_\zd$, $\zd\le\za$, $\zd\in\frak{O}_s$, belongs to $A_\zd$. Hence, by definition of $d_\za$ and in view of the induction assumption, we get $$d_\za(a_\zd)=d_\zd(a_\zd)\in A_0\0\cS\langle a_\ze:\ze<\zd,\ze\in\frak{O}_s\ra\;.$$ Eventually, since $X$ is colimit respecting, not only $A_\za =\colim_{\zb<\za}A_\zb=\bigcup_{\zb<\za}A_\zb$, but, furthermore, for any $\zg<\za$, the $\tt DG\cD A$-morphism $X_{\za\zg}:A_\zg\to A_\za$ is the map $X_{\za\zg}:A_\zg\to \bigcup_{\zb<\za}A_\zb$, i.e., the canonical inclusion.\end{proof}

We now come back to the proof of Part (i) of Theorem \ref{Reduction}, i.e., we now explain why the morphism $i:(A_0,d_0)\to C$, where $C=\op{colim}_{\za<\zl}(A_\za,d_\za)$ and where $i$ is the first of the morphisms that are part of the colimit construction, is a {\small RS$\cD$A} -- see above. If $\zl\in\frak{O}_s$, the colimit $C$ coincides with $(A_{\zl-1},d_{\zl-1})$ and $i=X_{\zl-1,0}$. Hence, the morphism $i$ is a {\small RS$\cD$A} in view of Lemma \ref{Lem4}. If $\zl\in\frak{O}_\ell$, the colimit $C=\op{colim}_{\za<\zl}(A_\za,d_\za)$ is, like above, the direct limit of the direct $\tt DG\cD A$-system $(X_\za=(A_\za,d_\za),X_{\za\zb})$ indexed by the directed poset $\zl$, whose morphisms $X_{\za\zb}$ are, in view of Lemma \ref{Lem4}, canonical inclusions. Hence, $C$ is again an ordinary union: \be\label{LimUnion}C=\bigcup_{\za<\zl}A_\za=A_0\0\cS\langle a_\zd: \zd<\zl,\zd\in\frak{O}_s\ra\;,\ee where the last equality is due to Lemma \ref{Lem4}. We define the differential $d_C$ on $C$ exactly as we defined the differential $d_\za$ on the direct limit in the proof of Lemma \ref{Lem4}. It is then straightforwardly checked that $i$ is a {\small RS$\cD$A}.\medskip

(ii) We still have to show that any {\small RS$\cD$A} $(A_0,d_0)\to (A_0\boxtimes\cS V,d)$ can be constructed as a transfinite composition of pushouts of generating cofibrations $\psi_n$, $n\ge 0$. Let $(a_j)_{j\in J}$ be the basis of the free non-negatively graded $\cD$-module $V$. Since $J$ is a well-ordered set, it is order-isomorphic to a unique ordinal $\zm=\{0,1,\ldots,n,\ldots,\zw,\zw+1,\ldots\}$, whose elements can thus be utilized to label the basis vectors. However, we prefer using the following order-respecting relabelling of these vectors: $$a_0\rightsquigarrow a_1, a_1\rightsquigarrow a_2,\ldots, a_n\rightsquigarrow a_{n+1},\ldots, a_\omega\rightsquigarrow a_{\omega+1}, a_{\omega+1}\rightsquigarrow a_{\omega+2},\ldots$$ In other words, the basis vectors of $V$ can be labelled by the successor ordinals that are strictly smaller than $\zl:=\zm+1\,$ (this is true, whether $\zm\in\frak{O}_s$, or $\zm\in\frak{O}_\ell\,$): $$V=\bigoplus_{\zd<\zl,\;\zd\in\frak{O}_s} \cD\cdot a_\zd\;.$$

For any $\za<\zl$, we now set $$(A_\za,d_\za):=(A_0\boxtimes \cS\langle a_\delta: \delta\leq\za, \zd\in\frak{O}_s\rangle,d|_{A_\za})\;.$$ It is clear that $A_\za$ is a graded $\cD$-subalgebra of $A_0\0\cS V$. Since $A_\za$ is generated, as an algebra, by the elements of the types $\frak{a}_0\0 1_\cO$ and $D\cdot(1_{A_0}\0 a_\zd)$, $D\in\cD$, $\zd\le\za,$ $\zd\in\frak{O}_s$, and since $$d(\frak{a}_0\0 1_\cO)=d_0(\frak{a}_0)\0 1_\cO\in A_\za$$ and $$d(D\cdot(1_{A_0}\0 a_\zd))\in A_0\0\cS\langle a_\ze:\ze<\zd,\ze\in\frak{O}_s\ra\subset A_\za\;,$$ the derivation $d$ stabilizes $A_\za$. Hence, $(A_\za,d_\za)=(A_\za,d|_{A_\za})$ is actually a differential graded $\cD$-subalgebra of $(A_0\boxtimes\cS V,d)$.\medskip

If $\zb\le\za<\zl$, the algebra $(A_\zb,d|_{A_\zb})$ is a differential graded $\cD$-subalgebra of $(A_\za,d|_{A_\za})$, so that the canonical inclusion $i_{\za\zb}:(A_\zb,d_\zb)\to(A_\za,d_\za)$ is a $\tt DG\cD A$-morphism. In view of the techniques used in (i), it is obvious that the functor $X=(A_-,d_-):\zl\to \tt DG\cD A$ respects colimits, and that the colimit of the whole $\zl$-sequence (remember that $\zl=\zm+1\in\frak{O}_s$) is the algebra $(A_\zm,d_\zm)=(A_0\boxtimes\cS V,d)$, i.e., the original algebra.\medskip

The {\small RS$\cD$A} $(A_0,d_0)\to (A_0\boxtimes\cS V,d)$ has thus been built as transfinite composition of canonical $\tt DG\cD A$-inclusions $i:(A_{\za},d_\za)\to (A_{\za+1},d_{\za+1})$, $\za+1<\zl$. Recall that $$A_{\za+1}=A_\za\0\cS\langle a_{\za+1}\ra\simeq A_\za\0\cS(S^n)\;,$$ if we set $n:=\deg(a_{\za+1})$. It suffices to show that $i$ is a pushout of $\psi_n$, see Figure \ref{CPD2}.
\begin{figure}[h]
\begin{center}
\begin{tikzpicture}
  \matrix (m) [matrix of math nodes, row sep=3em, column sep=3em]
    {  \cS(S^{n-1}) & (A_\za,d_\za)  \\
       \cS(D^n) & (A_\za\boxtimes\cS(S^n),d_{\za+1})  \\ };
 \path[->]
 (m-1-2) edge  node[right] {$\scriptstyle{i}$} (m-2-2);
 \path[->]
 (m-1-1) edge  node[above] {$\scriptstyle{\zf}$} (m-1-2);
  \path[->]
 (m-1-1) edge  node[left] {$\scriptstyle{\psi_n}$} (m-2-1);
  \path[->]
 (m-2-1) edge  node[above] {$\scriptstyle{j}$} (m-2-2);
\end{tikzpicture}
\caption{$i$ as pushout of $\psi_n$}\label{CPD2}
\end{center}
\end{figure}
\noindent We will detail the case $n\ge 1$. Since all the differentials are restrictions of $d$, we have $\zk_{n-1}:=d_{\za+1}(a_{\za+1})\in A_\za\cap\ker_{n-1}d_{\za}$, and $\zf(1_{n-1}):=\zk_{n-1}$ defines a $\tt DG\cD A$-morphism $\zf$, see Example \ref{Lem1}. When using the construction described in Example \ref{Lem1}, we get the pushout $i:(A_\za,d_\za)\to (A_\za\boxtimes\cS(S^n),\p)$ of the morphisms $\psi_n$ and $\zf$. Here $i$ is the usual canonical inclusion and $\p$ is the differential defined by Equation (\ref{RSA-d}). It thus suffices to check that $\p=d_{\za+1}$. Let $\frak{a}_\za\in A^p_\za$ and let $x_1\simeq x_1\cdot\, a_{\za+1},\ldots,x_k\simeq x_k\cdot\, a_{\za+1}\in\cD\cdot\, a_{\za+1}=S^n$. Assume, to simplify, that $k=2$; the general case is similar. When denoting the multiplication in $A_\za$ (resp., $A_{\za+1}=A_\za\0\cS(S^n)$) as usual by $\ast$ (resp., $\star\,$), we obtain $$\p(\frak{a}_\za\0 x_1\odot x_2)=$$
$$d_\za(\frak{a}_\za)\0 x_1\odot x_2 + (-1)^p(\frak{a}_\za\ast (x_1\cdot \zk_{n-1}))\0 x_2+(-1)^{p+n}(\frak{a}_\za\ast (x_2\cdot\zk_{n-1}))\0 x_1=$$

$$(d_\za(\frak{a}_\za)\0 1_\cO)\star (1_{A_\za}\0 x_1)\star (1_{A_\za}\0 x_2)+$$ $$(-1)^p(\frak{a}_\za\0 1_\cO)\star ((x_1\cdot\zk_{n-1})\0 1_\cO)\star (1_{A_\za}\0 x_2)+$$ $$(-1)^{p+n}(\frak{a}_\za\0 1_\cO)\star (1_{A_\za}\0 x_1)\star ((x_2\cdot\zk_{n-1})\0 1_\cO)=$$

$$d_{\za+1}(\frak{a}_\za\0 1_\cO)\star (1_{A_\za}\0 x_1)\star (1_{A_\za}\0 x_2)+$$ $$(-1)^p(\frak{a}_\za\0 1_\cO)\star d_{\za+1}(1_{A_\za}\0 x_1)\star (1_{A_\za}\0 x_1)+$$ $$(-1)^{p+n}(\frak{a}_\za\0 1_\cO)\star (1_{A_\za}\0 x_1)\star d_{\za+1}(1_{A_\za}\0 x_2)=$$

$$d_{\za+1}(\frak{a}_\za\0 x_1\odot x_2)\;.$$
\end{proof}


\section{Explicit functorial cofibration -- fibration decompositions}\label{Factorizations}

\def\fb{{\mathfrak b}}

In \cite[Theorem 4]{BPP1}, we proved that any $\tt DG\cD A$-morphism $\zf:A\to B$ admits a functorial factorization \be\label{TrivCofFib}A\stackrel{i}{\longrightarrow}A\0\cS U\stackrel{p}{\longrightarrow}B\;,\ee where $p$ is a fibration and $i$ is a weak equivalence, as well as a split minimal {\small RS$\cD$A}. In view of Theorem \ref{Cof} of the present paper, the morphism $i$ is thus a cofibration, with the result that we actually constructed a natural decomposition $\zf=p\circ i$ of an arbitrary $\tt DG\cD A$-morphism $\zf$ into $i\in\text{\small TrivCof}$ and $p\in\text{\small Fib}$. The description of this factorization is summarized below, in Theorem \ref{P:c-tf_tc-f}, which provides essentially an explicit natural `{\small Cof -- TrivFib}' decomposition \be\label{CofTrivFib}A\stackrel{i'}{\longrightarrow}A\0\cS U'\stackrel{p'}{\longrightarrow}B\;.\ee

Since the model category $\tt DG\cD A$ is cofibrantly generated with generating cofibrations (resp., trivial cofibrations) $\cS(I)$ (resp., $\cS(J)$), it admits as well functorial factorizations `{\small TrivCof -- Fib}' and `{\small Cof -- TrivFib}' given by the small object argument ({\small SOA}). The latter general technique factors a morphism $\zf:A\to B$ into morphisms \be\label{SOAFac}A\stackrel{i}\longrightarrow C\stackrel{p}{\longrightarrow} B\ee that are obtained as the colimit of a sequence $$A\stackrel{i_{n}}\longrightarrow C_n\stackrel{p_n}{\longrightarrow} B\;,$$ in a way such that $p\in\text{\small RLP}(\cS(J))=\text{\small Fib}$ (resp., $p\in\text{\small RLP}(\cS(I))=\text{\small TrivFib}$). The idea is that, in view of the smallness of the sources in $\cS(J)$ (resp., $\cS(I)$), each commutative square with right down arrow $p:C\to B$ that must admit a lift, factors through a commutative square with right down arrow $p_n:C_n\to B$, and that it therefore suffices to construct $C_{n+1}$ in a way such that `it contains the required lift'. More details can be found in Appendix \ref{SOA}.\medskip

The decompositions (\ref{TrivCofFib}) and (\ref{CofTrivFib}) are $\tt DG\cD A$-specific and different from the general {\small SOA}-factorizations (\ref{SOAFac}). Further, they implement less abstract, in some sense Koszul-Tate type, functorial fibrant and cofibrant resolution functors.\medskip

Before stating the afore-mentioned Theorem \ref{P:c-tf_tc-f}, we sketch the construction of the factorization (\ref{CofTrivFib}). To simplify, we denote algebras of the type $A\0 \cS V_k$ by $R_{V_k}$, or simply $R_k\,$.\medskip

We start from the `small' `{\small Cof -- Fib}' decomposition (\ref{TrivCofFib}) of a $\tt DG\cD A$-morphism $A\stackrel{\zf}{\longrightarrow} B$, i.e., from the factorization $A\stackrel{i}{\longrightarrow}R_U\stackrel{p}{\longrightarrow}B$, see \cite[Section 7.7]{BPP1}. To find a substitute $q$ for $p$, which is a trivial fibration, {\it we mimic an idea used in the construction of the Koszul-Tate resolution: we add generators to improve homological properties}.\medskip

Note first that $H(p)$ is surjective if, for any homology class $[\zb_n]\in H_n(B)$, there is a class $[\zr_n]\in H_n(R_U)$, such that $[p\,\zr_n]=[\zb_n]$. Hence, consider all the homology classes $[\zb_n]$, $n\ge 0,$ of $B$, choose in each class a representative $\dot\zb_n\simeq [\zb_n]$, and add generators $\mbi_{\dot\zb_n}$ to those of $U$. It then suffices to extend the differential $d_1$ (resp., the fibration $p$) defined on $R_U=A\0\cS U$, so that the differential of $\mbi_{\dot\zb_n}$ vanishes (resp., so that the projection of $\mbi_{\dot\zb_n}$ coincides with $\dot\zb_n$) ($\rhd_1$ -- this triangle is just a mark that allows us to retrieve this place later on). To get a {\it functorial} `{\small Cof -- TrivFib}' factorization, we do not add a new generator $\mbi_{\dot\zb_n}$, for each homology class $\dot\zb_n\simeq [\zb_n]\in H_n(B)$, $n\ge 0,$ but we add a new generator $\mbi_{\zb_n}$, for each cycle $\zb_n\in\ker_n d_B$, $n\ge 0\,.$ Let us implement this idea in a rigorous manner. Assign the degree $n$ to $\mbi_{\zb_n}$ and set $$V_0:=U\oplus G_0:= U\oplus \langle \mbi_{\zb_n}: \zb_n\in\ker_n d_B, n\ge 0\ra=$$ \be\label{Newg}\langle s^{-1}\mbi_{b_n}, \mbi_{b_n}, \mbi_{\zb_n}: b_n\in B_n, n>0, \zb_n\in \ker_n d_B, n\ge 0 \ra\;.\ee Set now \be\label{Diffg}\zd_{V_0}(s^{-1}\mbi_{b_n})=d_1(s^{-1}\mbi_{b_n})=0,\;\;\zd_{V_0}\mbi_{b_n}=d_1\mbi_{b_n}={s^{-1}\mbi_{b_n}},\;\;\zd_{V_0}\mbi_{\zb_n}=0\;,\ee thus defining, in view of \cite[Lemma 1]{BPP1}, a differential graded $\cD$-module structure on $V_0$. It follows that $(\cS V_0,\zd_{V_0})\in \tt DG\cD A$ and that \be\label{NewgTot}(R_0,\zd_0):=(A\0\cS V_0,d_A\0\id+\id\0\,\zd_{V_0})\in\tt DG\cD A\;.\ee Similarly, we set \be\label{Morpg}q_{V_0}(s^{-1}\mbi_{b_n})=p(s^{-1}\mbi_{b_n})=\ze(s^{-1}\mbi_{b_n})=d_Bb_n,\;\;q_{V_0}\mbi_{b_n}=p\mbi_{b_n}=\ze\mbi_{b_n}=b_n,\;\;q_{V_0}\mbi_{\zb_n}={\zb}_n\;.\ee We thus obtain \cite[Lemma 2]{BPP1} a morphism $q_{V_0}\in{\tt DG\cD M}(V_0,B)$ -- which uniquely extends to a morphism $q_{V_0}\in{\tt {DG\cD A}}(\cS V_0,B)$. Finally, \be\label{Morp}q_0=\zm_B\circ(\zf\0 q_{V_0})\in {\tt DG\cD A}(R_0,B)\;,\ee where $\zm_B$ denotes the multiplication in $B$. Let us emphasize that $R_U=A\0\cS U$ is a direct summand of $R_0=A\0\cS V_0$, and that $\zd_0$ and $q_0$ just extend the corresponding morphisms on $R_U$: $\zd_0|_{R_U}=d_1$ and $q_0|_{R_U}=p\,$.\medskip

So far we ensured that $H(q_0):H(R_0)\to H(B)$ is surjective; however, it must be injective as well, i.e., for any $\zs_n\in\ker \zd_0$, $n\ge 0,$ such that $H(q_0)[\zs_n]=0$, i.e., such that $q_0\zs_n\in\im d_B$, there should exist $\zs_{n+1}\in R_0$ such that \be\label{Extension}\zs_n=\zd_0\zs_{n+1}\;.\ee We denote by $\cB_0$ the set of $\zd_0$-cycles that are sent to $d_B$-boundaries by $q_0\,$: $${\cB}_0=\{\zs_n\in\ker\zd_0: q_0\zs_n\in\im d_B, n\ge 0\}\;.$$ In principle it now suffices to add, to the generators of $V_0$, generators $\mbi^1_{\zs_n}$ of degree $n+1$, $\zs_n\in{\cB}_0$, and to extend the differential $\zd_0$ on $R_0$ so that the differential of $\mbi^1_{\zs_n}$ coincides with $\zs_n$ ($\rhd_2$). However, it turns out that to obtain a {\it functorial} `{\small Cof -- TrivFib}' decomposition, we must add a new generator $\mbi^1_{\zs_n,\fb_{n+1}}$ of degree $n+1$, for each pair $(\zs_n,\fb_{n+1})$ such that $\zs_n\in\ker\zd_0$ and $q_0\zs_n=d_B\fb_{n+1}\,$: we set \be\label{Critset}{\frak B}_0=\{(\zs_n,\fb_{n+1}):\zs_n\in\ker\zd_0,\fb_{n+1}\in d_B^{-1}\{q_0\zs_n\},n\ge 0\}\ee and \be\label{Newgen}V_1:=V_0\oplus G_1:= V_0\oplus\langle \mbi^1_{\zs_n,\fb_{n+1}}:(\zs_n,\fb_{n+1})\in{\frak B}_0\ra\;.\ee To endow the graded $\cD$-algebra \be\label{VewDGDA}R_1:=A\0\cS V_1\simeq R_0\0\cS G_1\ee with a differential graded $\cD$-algebra structure $\zd_1$, we apply Lemma \ref{LemRSA} (of the present paper), with \be\label{DiffNewGen}\zd_1(\mbi^1_{\zs_n,\fb_{n+1}})=\zs_n\in (R_0)_n\cap \ker\zd_0\;,\ee exactly as suggested by Equation (\ref{Extension}). The differential $\zd_1$ is then given by Equation (\ref{DefRSADiff}) and it extends the differential $\zd_0$ on $R_0$. The extension of the $\tt DG\cD A$-morphism $q_0:R_0\to B$ by a $\tt DG\cD A$-morphism $q_1:R_1\to B$ is built from its definition \be\label{MorpNewGen}q_1(\mbi^1_{\zs_n,\fb_{n+1}})=\fb_{n+1}\in B_{n+1}\cap d_B^{-1}\{q_0\zd_1(\mbi^1_{\zs_n,\fb_{n+1}})\}\ee on the generators and from Equation (\ref{DefRSAMorph}) in Lemma \ref{LemRSA}.\medskip

Eventually, starting from $(R_U,d_1)\in{\tt DG\cD A}$ and $p\in{\tt DG\cD A}(R_U,B)$, we end up -- when trying to make $H(p)$ bijective -- with $(R_{1},\zd_1)\in{\tt DG\cD A}$ and $q_1\in{\tt DG\cD A}(R_{1},B)$ -- so that now $H(q_1):H(R_1)\to H(B)$ must be bijective. Since $(R_1,\zd_1)$ extends $(R_0,\zd_0)$ and $H(q_0):H(R_0)\to H(B)$ is surjective, it is easily checked that this property holds a fortiori for $H(q_1)$. However, when working with $R_1\supset R_0$, the `critical set' ${\cal B}_1\supset {\cal B}_0$ increases, so that we must add new generators $\mbi_{\zs_n}^2$, $\zs_n\in{\cal B}_1\setminus{\cal B}_0$, where $${\cal B}_1=\{\zs_n\in\ker\zd_1:q_1\zs_n\in\im d_B, n\ge 0\}\;.\quad (\rhd_3)$$ To build a {\it functorial$\,$} factorization, we consider not only the `critical set' \be\label{CritSet}{\frak B}_1=\{(\zs_n,\fb_{n+1}):\zs_n\in\ker\zd_1, \fb_{n+1}\in d_B^{-1}\{q_1\zs_n\},n\ge 0\}\;,\ee but also the module of new generators \be\label{NewGen}G_2=\langle\mbi^2_{\zs_n,\fb_{n+1}}:(\zs_n,\fb_{n+1})\in{\frak B}_1\ra\;,\ee indexed, not by ${\frak B}_1\setminus{\frak B}_0$, but by ${\frak B}_1$. Hence an iteration of the procedure (\ref{Critset}) - (\ref{MorpNewGen}) and the definition of a sequence $$(R_0,\zd_0)\rightarrow (R_1,\zd_1)\rightarrow(R_2,\zd_2)\rightarrow\ldots\rightarrow(R_{k-1},\zd_{k-1})\rightarrow (R_{k},\zd_{k})\rightarrow\ldots$$ of canonical inclusions of differential graded $\cD$-algebras $(R_k,\zd_k)$, $R_k=A\0\cS V_k$, $\zd_k|_{R_{k-1}}=\zd_{k-1}$, together with a sequence of ${\tt DG\cD A}$-morphisms $q_k:R_k\to B$, such that $q_k|_{R_{k-1}}=q_{k-1}$. The definitions of the differentials $\zd_k$ and the morphisms $q_k$ are obtained inductively, and are based on Lemma \ref{LemRSA}, as well as on equations of the same type as (\ref{DiffNewGen}) and (\ref{MorpNewGen}).\medskip

The direct limit of this sequence is a differential graded $\cD$-algebra $(R_V,d_2)=(A\0\cS V,d_2)$, together with a morphism $q:A\0\cS V\to B$.

As a set, the colimit of the considered system of canonically included algebras $(R_k,\zd_k)$, is just the union of the sets $R_k$, see Equation (\ref{LimUnion}). We proved above that this set-theoretical inductive limit can be endowed in the standard manner with a differential graded $\cD$-algebra structure and that the resulting algebra {\it is} the direct limit in $\tt DG\cD A$. One thus obtains in particular that $d_2|_{R_k}=\zd_k\,$.

Finally, the morphism $q:R_V\to B$ comes from the universality property of the colimit and it allows to factor the morphisms $q_k:R_k\to B$ through $R_V$. We have: $q|_{R_k}=q_k\,$.\medskip

We will show that this morphism $A\0\cS V\stackrel{q}{\longrightarrow} B$ really leads to a `{\small Cof -- TrivFib'} decomposition $A\stackrel{j}{\longrightarrow} A\0\cS V\stackrel{q}{\longrightarrow} B$ of $A\stackrel{\zf}{\longrightarrow} B$.

\begin{theo}\label{P:c-tf_tc-f}
In $\tt DG\mathcal{D}A$, a functorial `{\small TrivCof -- Fib}' factorization $(i,p)$ and a functorial `{Cof -- TrivFib}' factorization $(j,q)$ of an arbitrary morphism
$$
\zf:(A,d_A)\to (B,d_B)\;,
$$
see Figure \ref{Fact}, can be constructed as follows:\medskip

\begin{figure}[h]
\begin{center}
\begin{tikzpicture}
  \matrix (m) [matrix of math nodes, row sep=3em, column sep=3em]
    {  (A,d_A) & (A\boxtimes\cS U,d_1)  \\
       (A\boxtimes\cS V,d_2) & (B,d_B)  \\ };
 \path[->>]
 (m-1-2) edge  node[right] {$\scriptstyle{p}$} (m-2-2);
 \path[->]
 (m-1-1) edge  node[below] {$\scriptstyle{\zf}$} (m-2-2);
 \path[>->]
 (m-1-1) edge  node[above] {$\sim$} node[below] {$\scriptstyle{i}$} (m-1-2);
  \path[>->]
 (m-1-1) edge  node[left] {$\scriptstyle{j}$} (m-2-1);
  \path[->>]
 (m-2-1) edge  node[above] {$\sim$} node[below] {$\scriptstyle{q}$} (m-2-2);
\end{tikzpicture}
\caption{Functorial factorizations}\label{Fact}
\end{center}
\end{figure}

(1) The module $U$ is the free non-negatively graded $\cD$-module with homogeneous basis $$\bigcup\,\{s^{-1}\mathbb{I}_{b_n},\mathbb{I}_{b_n}\}\;,$$ where the union is over all $b_n\in B_n$ and all $n>0$, and where $\deg({s^{-1}\mathbb{I}_{b_n}})=n-1$ and $\deg(\mathbb{I}_{b_n})=n\,.$ In other words, the module $U$ is a direct sum of copies of the discs $$D^n=\cD \cdot \mbi_{b_n}\oplus \cD \cdot s^{-1}\mbi_{b_n}\;,$$ $n>0$. The differentials $$s^{-1}:D^n\ni \mathbb{I}_{b_n}\to s^{-1}\mathbb{I}_{b_n}\in D^n$$ induce a differential $d_U$ in $U$, which in turn implements a differential $d_S$ in $\cS U$. The differential $d_1$ is then given by $d_1=d_A\0\id+\id\0 d_S\,.$ The trivial cofibration $i:A\to A\0\cS U$ is a minimal split {\small RS$\cD\!$A} defined by $i:\frak{a}\mapsto\frak{a}\0 1_\cO$, and the fibration $p:A\0\cS U\to B$ is defined by $p=\zm_B\circ (\zf\0 \ze)$, where $\zm_B$ is the multiplication of $B$ and where $\ze(\mbi_{b_n})=b_n$ and $\ze(s^{-1}\mbi_{b_n})=d_Bb_n\,$.\medskip

(2) The module $V$ is the free non-negatively graded $\cD$-module with homogeneous basis $$\bigcup\,\{s^{-1}\mathbb{I}_{b_n},\mathbb{I}_{b_n},\mbi_{\zb_n},\mbi^1_{\zs_n,\fb_{n+1}},\mbi^2_{\zs_n,\fb_{n+1}},\ldots,\mbi^k_{\zs_n,\fb_{n+1}},\ldots\}\;,$$ where the union is over all $b_n\in B_n$, $n>0,$ all $\zb_n\in\ker_nd_B$, $n\ge 0$, and all pairs $$(\zs_n,\fb_{n+1}),\; n\ge 0,\;\, \text{in}\;\, {\frak B}_0,{\frak B}_1,\ldots, {\frak B}_k, \ldots,\;$$ respectively. The sequence of sets $${\frak B}_{k-1}=\{(\zs_n,\fb_{n+1}):\zs_n\in\ker\zd_{k-1}, \fb_{n+1}\in d_B^{-1}\{q_{k-1}\zs_n\},n\ge 0\}$$ is defined inductively, together with an increasing sequence of differential graded $\cD$-algebras $(A\0\cS V_k,\zd_k)$ and a sequence of morphisms $q_k:A\0\cS V_k\to B$, by means of formulas of the type (\ref{Critset}) - (\ref{MorpNewGen}) (see also (\ref{Newg}) - (\ref{Morp})). The degrees of the generators of $V$ are \be\label{GenDeg}n-1,\,n,\,n,\,n+1,\,n+1,\ldots, n+1,\ldots\ee The differential graded $\cD$-algebra $(A\0\cS V,d_2)$ is the colimit of the preceding increasing sequence of algebras: \be\label{DefD2}d_2|_{A\0\cS V_k}=\zd_k\;.\ee The trivial fibration $q:A\0\cS V\to B$ is induced by the $q_k$-s via universality of the colimit: \be\label{DefQ}q|_{A\0\cS V_k}=q_k\;.\ee Eventually, the cofibration $j:A\to A\0\cS V$ is a minimal (non-split) {\small RS$\cD\!$A}, which is defined as in (1) as the canonical inclusion; the canonical inclusion $j_k:A\to A\0\cS V_k\,$, $k>0\,$, is also a minimal (non-split) {\small RS$\cD\!$A}, whereas $j_0:A\to A\0\cS V_0$ is a minimal split {\small RS$\cD\!$A}.
\end{theo}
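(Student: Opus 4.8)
The plan is to handle the two factorizations separately: the `{\small TrivCof -- Fib}' pair $(i,p)$ is essentially inherited from \cite{BPP1}, while the genuine work lies in the `{\small Cof -- TrivFib}' pair $(j,q)$. For Part (1) I would simply invoke \cite[Theorem 4]{BPP1}, which already produces $A\stackrel{i}{\longrightarrow}A\0\cS U\stackrel{p}{\longrightarrow}B$ with $p$ a fibration, $i$ a weak equivalence, and $i$ moreover a split minimal {\small RS$\cD$A}; the explicit $U$, $d_1$, $i$ and $p$ are as stated. The one new observation is that $i$ is a \emph{cofibration}, which is immediate from Theorem \ref{Cof}, since any {\small RS$\cD$A} is (a retract of itself, hence) a cofibration. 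Thus $i$ is a trivial cofibration, $p\circ i=\zf$, and $(i,p)$ is the required factorization.

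For Part (2) I would first check that the inductive data are well posed. At each stage, Lemma \ref{LemRSA}(i) applies to build $\zd_{k+1}$ on $R_{k+1}=R_k\0\cS G_{k+1}$: the prescription (\ref{DiffNewGen}), $\zd_{k+1}\mbi^{k+1}_{\zs_n,\fb_{n+1}}=\zs_n$, satisfies hypothesis (\ref{CondRSADiff}) precisely because membership $(\zs_n,\fb_{n+1})\in\frak{B}_k$ forces $\zs_n\in(R_k)_n\cap\ker\zd_k$. Dually, Lemma \ref{LemRSA}(ii) yields the extension $q_{k+1}$ from the value (\ref{MorpNewGen}), whose admissibility $d_B\fb_{n+1}=q_k\zs_n$ is again built into the definition of $\frak{B}_k$. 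Each inclusion $R_k\hookrightarrow R_{k+1}$ is then a {\small RS$\cD$A}, and I would pass to the colimit exactly as in the proof of Theorem \ref{Reduction}: $(A\0\cS V,d_2)$ is the union $\bigcup_k R_k$ with $d_2|_{R_k}=\zd_k$, and $q$ is the universally induced map with $q|_{R_k}=q_k$. Since $V$ is free on the displayed homogeneous basis and $d_2$ is lowering (any generator of degree $m$ has differential supported on generators of degree $<m$, so a degree-compatible well-ordering is simultaneously lowering and minimal), $j$ is a minimal {\small RS$\cD$A}, hence a cofibration by Theorem \ref{Cof}. It is non-split from the stage $G_1$ on, where $\zd_1\mbi^1_{\zs_n,\fb_{n+1}}=\zs_n\in R_0$ need not be of the split form (\ref{Split}), whereas $j_0:A\to A\0\cS V_0$ is split, its differential being (\ref{NewgTot}). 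The identity $q\circ j=\zf$ follows by evaluating $q_0=\zm_B\circ(\zf\0 q_{V_0})$ on $\frak{a}\0 1_\cO$.

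It remains to show that $q$ is a trivial fibration. The fibration property is immediate: in degree $n>0$ every $b_n\in B_n$ equals $q\,\mbi_{b_n}$. For the weak equivalence, the key structural fact is $R_V=\bigcup_k R_k$, so every $d_2$-cycle lies in some $R_k$, where it is a $\zd_k$-cycle. Surjectivity of $H(q)$ is then clear, since each $\zb_n\in\ker_n d_B$ is the image of the $\zd_0$-cycle $\mbi_{\zb_n}$, with $q\,\mbi_{\zb_n}=\zb_n$. The crux is injectivity: given a $d_2$-cycle $\zs_n$ with $q\zs_n=d_B\fb_{n+1}$, choose $k$ with $\zs_n\in R_k$; then $(\zs_n,\fb_{n+1})\in\frak{B}_k$, so $\mbi^{k+1}_{\zs_n,\fb_{n+1}}\in R_{k+1}$ satisfies $d_2\,\mbi^{k+1}_{\zs_n,\fb_{n+1}}=\zd_{k+1}\mbi^{k+1}_{\zs_n,\fb_{n+1}}=\zs_n$, exhibiting $\zs_n$ as a boundary. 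This is exactly why the critical sets are indexed by all pairs $(\zs_n,\fb_{n+1})$ rather than by homology classes, which is also what makes the assignment $\zf\mapsto(j,q)$ functorial, since no representatives are ever chosen. I expect the main obstacle to be organizing this injectivity argument rigorously, namely confirming that cycles are caught and boundaries supplied at finite stages so that homology is computed correctly across the filtered colimit; once that is settled, surjectivity, the fibration property and functoriality are routine verifications.
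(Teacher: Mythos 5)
Your treatment of the construction itself follows the paper's own route almost step for step: Part (1) is indeed just \cite[Theorem 4]{BPP1} plus Theorem \ref{Cof}; the inductive well-posedness via Lemma \ref{LemRSA}, the passage to the colimit as in Theorem \ref{Reduction}, the surjectivity of $H(q)$ via the cycles $\mbi_{\zb_n}$, and the injectivity via ``every $d_2$-cycle lives in some $R_k$, hence the pair $(\zs_n,\fb_{n+1})\in\frak{B}_k$ supplies a bounding generator $\mbi^{k+1}_{\zs_n,\fb_{n+1}}$ at the next stage'' are exactly the paper's arguments (the paper also spells out the explicit degree-compatible well-ordering of the generators and checks the lowering property by the same degree count you give).

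The genuine gap is functoriality of $(j,q)$, which is part of the theorem's assertion and which you dismiss as a ``routine verification.'' It is not: the paper devotes its entire Appendix \ref{FunctReplFunct} to it. Given a commutative square $v\,\zf=\zf'\,u$, one must actually \emph{construct} a $\tt DG\cD A$-morphism $\zw:A\0\cS V\to A'\0\cS V'$ making both squares of (\ref{CompMor2}) commute, and this is done by an induction over the stages: $\zw_0=j'_0\,u\diamond_0 w_0$ with $w_0$ sending $s^{-1}\mbi_{b_n},\mbi_{b_n},\mbi_{\zb_n}$ to $s^{-1}\mbi_{v(b_n)},\mbi_{v(b_n)},\mbi_{v(\zb_n)}$, and then $\zw_k(\mbi^k_{\zs_n,\fb_{n+1}})=\mbi^k_{\zw_{k-1}(\zs_n),v(\fb_{n+1})}$. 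The nontrivial point at each stage is the well-definedness check that $(\zw_{k-1}(\zs_n),v(\fb_{n+1}))$ really lies in $\frak{B}'_{k-1}$ (i.e.\ $\zd'_{k-1}\zw_{k-1}(\zs_n)=0$ and $q'_{k-1}\zw_{k-1}(\zs_n)=d_{B'}v(\fb_{n+1})$), which uses the inductive commutation relations $\zw_{k-1}j_{k-1}=j'_{k-1}u$ and $v\,q_{k-1}=q'_{k-1}\zw_{k-1}$; one then passes to the colimit by universality. You do correctly identify the key mechanism -- indexing the new generators by \emph{all} pairs $(\zs_n,\fb_{n+1})$ rather than by chosen representatives is precisely what makes the assignment $(\zs_n,\fb_{n+1})\mapsto(\zw_{k-1}(\zs_n),v(\fb_{n+1}))$ land in the target index set without any choices (this is the content of the marks $\triangleleft_1,\triangleleft_2,\triangleleft_3$ and of Remark \ref{NonFuncFact}) -- so your argument is completable along the paper's lines, but as written the functoriality claim is asserted rather than proved, and you have also misplaced the weight of the proof: the injectivity of $H(q)$, which you single out as the main obstacle, is the short part.
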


\begin{proof} See Appendix \ref{CofTrivFibProof}. \end{proof}

\begin{rem}\label{NonFuncFact}
{\em \begin{itemize}
\item If we are content with a non-functorial `{\small Cof -- TrivFib}' factorization, we may consider the colimit $A\otimes\cS {\cal V}$ of the sequence $A\otimes\cS {\cal V}_k$ that is obtained by adding only generators (see ($\rhd_1$)) $$\mbi_{\dot\zb_n},\;\, n\ge 0,\;\, \dot\zb_n\simeq[\zb_n]\in H_n(B)\;,$$ and by adding only generators (see ($\rhd_2$) and ($\rhd_3$)) $$\mbi_{\zs_n}^1,\mbi_{\zs_n}^2,\ldots,\;\, n\ge 0,\;\, \zs_n\in {\cal B}_0,{\cal B}_1\setminus{\cal B}_0,\ldots\;$$
\item An explicit description of the functorial fibrant and cofibrant replacement functors, induced by the `{\small TrivCof -- Fib}' and `{\small Cof -- TrivFib}' decompositions of Theorem \ref{P:c-tf_tc-f}, can be found in Appendix \ref{FunctReplFunct}.
\end{itemize}}
\end{rem}

\section{First remarks on Koszul-Tate resolutions}

In this last section, we provide first insight into Koszul-Tate resolutions. Given a polynomial partial differential equation acting on sections of a vector bundle, we obtain, via our preceding constructions, a Koszul-Tate resolution ({\small KTR}) of the corresponding algebra $\cal R$ of on-shell functions. This resolution is a cofibrant replacement of $\cal R$ in the appropriate undercategory of $\tt DG\cD A$.\medskip

In a separate paper \cite{PP}, we give a general and precise definition of Koszul-Tate resolutions. We further show in that work that the classical Tate extension of the Koszul resolution \cite{HT}, the {\small KTR} implemented by a compatibility complex \cite{Ver}, as well as our just mentioned and below detailed model categorical {\small KTR}, are Koszul-Tate resolutions in the sense of this improved definition. Eventually, we investigate the relationships between these three resolutions.\medskip

Hence, the present section should be viewed as an introduction to topics on which we will elaborate in \cite{PP}.

\subsection{Undercategories of model categories}

Given a category $\tt C$ and an object $C\in \tt C$, the {\bf undercategory} or {\bf coslice category} $C\downarrow \tt C$ is the category whose objects are the $\tt C$-morphisms $C\to D$ with source $C$, and whose morphisms between $C\to D_1$ and $C\to D_2$ are the $\tt C$-morphisms $D_1\to D_2$ such that the triangle

\[
\begin{tikzcd}[column sep=1.5em]
 & C \arrow{dr} \arrow[swap]{dl}\\
 D_1 \arrow{rr} && D_2
\end{tikzcd}
\]
commutes. Composition and units are defined in the obvious manner.\medskip

There is a forgetful functor $\op{For}:C\downarrow{\tt C}\to \tt C$ that associates to each $(C\downarrow\tt C)$-object its target and to each $(C\downarrow\tt C)$-morphism its base $D_1\to D_2$. It is customary to write the objects $A$ and morphisms $t$ of the undercategory simply as $\op{For}(A)$ and $\op{For}(t)$ -- whenever no confusion arises (think for instance about smooth vector bundles over a fixed smooth base manifold and corresponding bundle maps). If $\tt C$ is cocomplete, the functor $\op{For}$ has a left adjoint $L_{\amalg}:{\tt C}\to C\downarrow{\tt C}$, which takes a $\tt C$-object $D$ to the morphism $C\to C\coprod D$ and a $\tt C$-morphism $f:D_1\to D_2$ to the commutative triangle

\[
\begin{tikzcd}[column sep=1.5em]
 & C \arrow{dr} \arrow[swap]{dl}\\
 C\coprod D_1 \arrow{rr} && C\coprod D_2
\end{tikzcd}
\]
that is induced via universality by the canonical morphisms $i_{D_2}\circ f:D_1\to C\coprod D_2$ and $i_C:C\to C\coprod D_2$.\medskip

Note also that $\id: C\to C$ is the initial object in $C\downarrow\tt C$, and that, if $\tt C$ has a terminal object $\star$, the unique morphism $C\to \star$ is the terminal object of $C\downarrow\tt C$.\medskip

The next proposition can be found in \cite{Hir2}.

\begin{prop}\label{ModCoSlice} If $C$ is an object of a model category $\tt C$, the coslice category $C\downarrow\tt C$ is also a model category: a $(C\downarrow\tt C)$-morphism $t$ is a cofibration, a fibration, or a weak equivalence, if $\op{For}(t)$ is a cofibration, a fibration, or a weak equivalence in $\tt C$. Moreover, if $\tt C$ is cofibrantly generated with generating cofibrations $I$ and generating trivial cofibrations $J$, the model category $C\downarrow \tt C$ is cofibrantly generated as well, with generating cofibrations $L_{\amalg}I$ and generating trivial cofibrations $L_{\amalg}J$.\end{prop}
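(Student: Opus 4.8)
The plan is to transfer everything along the forgetful functor $\op{For}:C\downarrow{\tt C}\to{\tt C}$, whose three distinguished classes are by definition the $\op{For}$-preimages of the corresponding classes in $\tt C$, and along its left adjoint $L_{\amalg}$. First I would verify the model axioms. The coslice $C\downarrow{\tt C}$ is complete and cocomplete because $\tt C$ is. Since $\op{For}$ is a functor that detects weak equivalences and preserves composition, the two-out-of-three property descends from $\tt C$; since a functor carries a retract diagram to a retract diagram and the three classes are retract-closed in $\tt C$, closure under retracts descends as well.

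The only step with real content is the lifting axiom. Given a lifting square in $C\downarrow{\tt C}$ with left edge $f:A\to B$ a cofibration and right edge a trivial fibration (or the dual pair), I would apply $\op{For}$ to obtain a lifting square in $\tt C$, solve it there, and obtain a lift $h:\op{For}(B)\to\op{For}(X)$. The point to verify is that $h$ is automatically a morphism under $C$. Writing $c_A,c_B,c_X$ for the structure maps out of $C$ and $u$ for the underlying top edge $A\to X$, one has $c_B=\op{For}(f)\circ c_A$ (as $f$ is a map under $C$) and $u\circ c_A=c_X$ (as the top edge is a coslice morphism), whence $h\circ c_B=h\circ\op{For}(f)\circ c_A=u\circ c_A=c_X$; thus $h$ lives in $C\downarrow{\tt C}$ and solves the original problem. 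The factorization axiom is obtained in the same spirit: factor $\op{For}(t)$ in $\tt C$, equip the intermediate object with the evident structure map from $C$, and observe that both factors are then maps under $C$, with functoriality inherited verbatim. This shows that $C\downarrow{\tt C}$ is a model category.

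For cofibrant generation I would invoke the recognition criterion of \cite{Hov}: it suffices that the domains of $L_{\amalg}I$ and $L_{\amalg}J$ be small and that $\op{RLP}(L_{\amalg}I)$ and $\op{RLP}(L_{\amalg}J)$ coincide, respectively, with the trivial fibrations and the fibrations. The engine is the transposition of lifting problems across the adjunction $L_{\amalg}\dashv\op{For}$: for $g$ in $\tt C$ and $t$ in $C\downarrow{\tt C}$, the map $L_{\amalg}g$ has the left lifting property against $t$ if and only if $g$ has it against $\op{For}(t)$. Hence $t\in\op{RLP}(L_{\amalg}I)$ iff $\op{For}(t)\in\op{RLP}(I)$, i.e. iff $t$ is a trivial fibration in $C\downarrow{\tt C}$, and likewise $\op{RLP}(L_{\amalg}J)$ is exactly the class of fibrations. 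For smallness I would use that $\op{For}$ creates the connected colimits along which transfinite compositions are formed, so that the adjunction isomorphism $\Hom_{C\downarrow{\tt C}}(L_{\amalg}s,-)\cong\Hom_{{\tt C}}(s,\op{For}(-))$ carries the smallness of a domain $s$ in $\tt C$ over to $L_{\amalg}s$ in $C\downarrow{\tt C}$. I expect the main obstacle to be precisely these two transfer steps, namely the automatic descent of the lift in the lifting axiom and the adjunction-transfer of lifting properties and smallness; everything else is formal bookkeeping.
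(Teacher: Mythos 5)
Your proposal is correct, and it is the standard argument: the paper itself gives no proof of this proposition, deferring entirely to Hirschhorn's note \cite{Hir2}, where precisely this line of reasoning (descent of the lifting and factorization axioms along $\op{For}$, and transposition of lifting problems and smallness across the adjunction $L_{\amalg}\dashv\op{For}$ for the cofibrant generation) is carried out. The two points you flag as having real content -- that a lift of the underlying square is automatically a morphism under $C$, and that $\op{RLP}(L_{\amalg}I)=\op{For}^{-1}(\op{RLP}(I))$ -- are indeed the only non-formal steps, and you handle both correctly.
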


When recalling that the coproduct in $\tt DG\cD A$ is the tensor product, we deduce from Theorem 3 in \cite{BPP1} and from Proposition \ref{ModCoSlice} above that:

\begin{cor} For any differential graded $\cD$-algebra $A$, the coslice category $A\downarrow \tt DG\cD A$ carries a cofibrantly generated model structure given by the adjoint pair $L_{\0}:{\tt DG\cD A}\rightleftarrows A\downarrow{\tt DG\cD A}:\op{For}$, in the sense that its distinguished morphism classes are defined by $\op{For}$ and its generating cofibrations and generating trivial cofibrations are given by $L_\0\,$.\end{cor}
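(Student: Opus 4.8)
The plan is to obtain this corollary as a direct specialization of Proposition \ref{ModCoSlice}, since all the substantive work is already in place: $\tt DG\cD A$ is a cofibrantly generated model category, and Proposition \ref{ModCoSlice} transfers such a structure to any coslice. First I would invoke Theorem \ref{FinGenModDGDA} (equivalently, Theorem 3 of \cite{BPP1}), which asserts that $\tt DG\cD A$ is a finitely -- hence cofibrantly -- generated model category with generating cofibrations $\cS(I)$ and generating trivial cofibrations $\cS(J)$. In particular $\tt DG\cD A$ is a model category and therefore complete and cocomplete; this cocompleteness is exactly the hypothesis under which Proposition \ref{ModCoSlice} produces a left adjoint $L_\amalg$ to the forgetful functor $\op{For}$.

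Next I would apply Proposition \ref{ModCoSlice} verbatim, taking $\tt C=\tt DG\cD A$ and $C=A$. This immediately yields that the coslice $A\downarrow\tt DG\cD A$ is itself a cofibrantly generated model category, whose cofibrations, fibrations, and weak equivalences are precisely those morphisms $t$ for which $\op{For}(t)$ lies in the corresponding class of $\tt DG\cD A$, and whose generating cofibrations and generating trivial cofibrations are $L_\amalg\cS(I)$ and $L_\amalg\cS(J)$, where $L_\amalg$ is the coslice left adjoint sending a $\tt DG\cD A$-object $D$ to the morphism $A\to A\coprod D$.

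The only remaining point -- and the single step genuinely specific to $\tt DG\cD A$ -- is to identify the abstract coproduct functor $L_\amalg$ with the announced functor $L_\0$. Here I would use the computation underlying Lemma \ref{DirSumTenPro}, namely that the binary coproduct in $\tt DG\cD A$, regarded as the category of commutative monoids in $\tt DG\cD M$, is the tensor product $\0$ over $\cO$. Consequently $A\coprod D\simeq A\0 D$ for every $D$, so that $L_\amalg$ is precisely $L_\0:D\mapsto(A\to A\0 D)$, and the generating (trivial) cofibrations $L_\amalg\cS(I)$ and $L_\amalg\cS(J)$ become $L_\0\cS(I)$ and $L_\0\cS(J)$, as claimed. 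I do not anticipate any real obstacle: the statement is an application of an off-the-shelf result (Proposition \ref{ModCoSlice}) to an already-established cofibrantly generated model structure, and the only content is the harmless translation $\coprod\leadsto\0$ furnished by Lemma \ref{DirSumTenPro}.
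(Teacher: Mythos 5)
Your proposal is correct and matches the paper's own argument: the corollary is stated there as an immediate consequence of Proposition \ref{ModCoSlice} together with the observation that the coproduct in $\tt DG\cD A$ is the tensor product, which is exactly your chain of reasoning (via Theorem \ref{FinGenModDGDA} and the identification $L_\amalg\simeq L_\0$). No gaps.
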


Let us conclude by noting that for $A=\cO$ the Quillen adjunction $$L_\0:{\tt DG\cD A}\rightleftarrows\cO\downarrow \tt DG\cD A:\op{For}$$ is obviously an isomorphism of categories.

\subsection{Basics of jet bundle formalism}

The jet bundle formalism allows for a coordinate-free approach to partial differential equations ({\small PDE}-s), i.e., to (not necessarily linear) differential operators ({\small DO}-s) acting between sections of smooth vector bundles (the confinement to vector bundles does not appear in more advanced approaches). To uncover the main ideas, we implicitly consider in this subsection trivialized line bundles $E$ over a 1-dimensional manifold $X$, i.e., we assume that $E\simeq \R\times\R$.\medskip

The key-aspect of the jet bundle approach to {\small PDE}-s is the passage to purely algebraic equations. Consider the order $k$ differential equation ({\small DE}) \be\label{PDE}F(t,\zf(t),d_t\zf,\ldots, d_t^k\zf)=F(t,\zf,\zf',\ldots,\zf^{(k)})|_{j^k\zf}=0\;,\ee where $(t,\zf,\zf',\ldots,\zf^{(k)})$ are coordinates of the $k$-th jet space $J^kE$ and where $j^k\zf$ is the $k$-jet of the section $\zf(t)$. Note that the algebraic equation \be\label{AlgE}F(t,\zf,\zf',\ldots,\zf^{(k)})=0\ee defines a `surface' $\cE^k\subset J^kE$, and that a solution of the considered {\small DE} is nothing but a section $\zf(t)$ whose $k$-jet is located on $\cE^k$.\medskip

A second fundamental feature is that one prefers replacing the original system of {\small PDE}-s by an enlarged system, its infinite prolongation, which also takes into account the consequences of the original one. More precisely, if $\zf(t)$ satisfies the original {\small PDE}, we have also $$d^\ell_t(F(t,\zf(t),d_t\zf,\ldots,d_t^k\zf))=(\p_t+\zf'\p_\zf+\zf''\p_{\zf'}+\ldots)^\ell F(t,\zf,\zf',\ldots,\zf^{(k)})|_{j^\infty\zf}=:$$ \be\label{ProlPDE}D_t^\ell F(t,\zf,\zf',\ldots,\zf^{(k)})|_{j^\infty\zf}=0,\;\forall \ell\in \N\;.\ee Let us stress that the `total derivative' $D_t$ or horizontal lift $D_t$ of $d_t$ is actually an infinite sum. The two systems of {\small PDE}-s, (\ref{PDE}) and (\ref{ProlPDE}), have clearly the same solutions, so we may focus just as well on (\ref{ProlPDE}). The corresponding algebraic system \be\label{ProlAlgE}D_t^\ell F(t,\zf,\zf',\ldots,\zf^{(k)})=0,\;\forall\ell\in\N\;\ee defines a `surface' $\cE^\infty$ in the infinite jet bundle $\zp_\infty:J^\infty E\to X$. A solution of the original system (\ref{PDE}) is now a section $\zf\in\zG(X,E)$ such that $(j^\infty\zf)(X)\subset{\cal E}^\infty$. The `surface' $\cE^\infty$ is often referred to as the `stationary surface' or the `shell'.\medskip

The just described passage from prolonged {\small PDE}-s to prolonged algebraic equations involves the lift of differential operators $d_t^\ell$ acting on $\cO(X)=\zG(X,X\times\R)$ (resp., sending -- more generally -- sections $\zG(X,G)$ of some vector bundle to sections $\zG(X,K)$), to horizontal differential operators $D_t^\ell$ acting on $\cO(J^\infty E)$ (resp., acting from $\zG(J^\infty E,\zp_\infty^*G)$ to $\zG(J^\infty E,\zp_\infty^*K)$). As seen from Equation (\ref{ProlPDE}), this lift is defined by $$(D_t^\ell F)\circ{j^\infty\zf}=d_t^\ell(F\circ j^\infty \zf)\;$$ (note that composites of the type $F\circ j^\infty\zf$, where $F$ is a section of the pullback bundle $\zp_\infty^* G$, are sections of $G$). The interesting observation is that the jet bundle formalism naturally leads to a systematic base change $X\rightsquigarrow J^\infty E$. The remark is fundamental in the sense that both, the classical Koszul-Tate resolution (i.e., the Tate extension of the Koszul resolution of a regular surface) and Verbovetsky's Koszul-Tate resolution (i.e., the resolution induced by the compatibility complex of the linearization of the equation), use the jet formalism to resolve on-shell functions $\cO(\cE^\infty)$, and thus enclose the base change $\bullet\to X$ $\;\rightsquigarrow\;$ $\bullet\to J^\infty E$. This means, dually, that we pass from $\tt DG\cD A$, i.e., from the coslice category $\cO(X)\downarrow \tt DG\cD A$ to the coslice category $\cO(J^\infty E)\downarrow \tt DG\cD A$.

\subsection{Revision of the classical Koszul-Tate resolution}

We first recall the local construction of the {\bf Koszul resolution} of the function algebra $\cO(\zS)$ of a regular surface $\zS\subset\R^n$. Such a surface $\zS$, say of codimension $r$, can locally always be described -- in appropriate coordinates -- by the equations \be\label{E}\zS:x^a=0,\;\forall a\in\{1,\ldots,r\}\;.\ee The Koszul resolution of $\cO(\zS)$ is then the chain complex made of the free Grassmann algebra $$\op{K}=\cO(\R^n)\otimes \cS[\zf^{a*}]$$ on $r$ odd generators $\zf^{a*}$ -- associated to the equations (\ref{E}) -- and of the Koszul differential \be\label{KDiff}\zd_{\op{K}}=x^a\p_{\zf^{a*}}\;.\ee Of course, the claim that this complex is a resolution of $\cO(\zS)$ means that the homology of $(\op{K},\zd_{\op{K}})$ is given by \be\label{Homology}H_0(\op{K})=\cO(\zS)\quad\text{and}\quad H_k(\op{K})=0,\;\forall k>0\;.\ee\medskip

The {\bf Koszul-Tate resolution} of the algebra $\cO(\cE^\infty)$ of on-shell functions is a generalization of the preceding Koszul resolution. In gauge field theory (our main target), $\cE^\infty$ is the stationary surface given by a system \be\label{Eq}\cE^\infty: D_x^\za F_i=0,\;\forall \za,i\;\ee of prolonged algebraized (see (\ref{ProlAlgE})) Euler-Lagrange equations that correspond to some action functional (here $x\in\R^p$ and $\za\in\N^p$). However, there is a difference between the situations (\ref{E}) and (\ref{Eq}): in the latter, there exist gauge symmetries that implement Noether identities and their extensions -- i.e., extensions \be\label{NI}D_x^\zb\; G_{j\za}^i\,D_x^\za F_i=0,\;\forall \zb,j\;\ee of $\cO(J^\infty E)$-linear relations $G_{j\za}^i\,D_x^\za F_i=0$ between the equations $D_x^\za F_i=0$ of $\cE^\infty$ --, which do not have any counterpart in the former. It turns out that, to kill the homology (see (\ref{Homology})), we must introduce additional generators that take into account these relations. More precisely, we do not only associate degree 1 generators $\zf^{\za*}_i$ to the equations (\ref{Eq}), but assign further degree 2 generators $C^{\zb*}_j$ to the relations (\ref{NI}). The Koszul-Tate resolution of $\cO(\cE^\infty)$ is then (under appropriate irreducibility and regularity conditions) the chain complex, whose chains are the elements of the free Grassmann algebra \be\label{KT}\op{KT}=\cO(J^{\infty}E)\otimes \cS[\zf^{\za*}_i,C^{\zb*}_j]\;,\ee and whose differential is defined in analogy with (\ref{KDiff}) by \be\label{KTDiff}\zd_{\op{KT}}=D^\za_xF_i\;\p_{\zf^{\za*}_i}+D_x^\zb\; G^i_{j\za}\,D_x^\za \zf^{*}_i\;\p_{C^{\zb*}_j}\;,\ee where we substituted $\zf^{*}_i$ to $F_i$ (and where total derivatives have to be interpreted in the extended sense that puts the `antifields' $\zf^{*}_i$ and $C^{*}_j$ on an equal footing with the `fields' $\zf^k$ (fiber coordinates of $E$)). The homology of this Koszul-Tate chain complex is actually concentrated in degree 0, where it coincides with $\cO(\cE^\infty)$ (compare with (\ref{Homology})).

\subsection{$\cD$-algebraic version of the Koszul-Tate resolution}

In this subsection, we briefly report on the $\cD$-algebraic approach to `Koszul-Tate' (see \cite{PP} for additional details).

\begin{prop} The functor $$\op{For}:\tt \cD A\to \cO A$$ has a left adjoint $${\cal J}^{\infty}:\tt \cO A\to \cD A\;,$$ i.e., for $B\in\tt \cO A$ and $A\in\tt \cD A$, we have \be\label{JetAlg}\h_{\tt\cD A}({\cal J}^{\infty}(B),A)\simeq \h_{\tt \cO A}(B,\op{For}(A))\;,\ee functorially in $A,B$.\end{prop}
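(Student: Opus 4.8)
The plan is to construct $\cJ^{\infty}$ by hand as an algebraic prolongation (jet) functor and then read off the adjunction (\ref{JetAlg}), bootstrapping everything from three adjunctions already available in this setting. Write $\cS_\cO$ for the free commutative $\cO$-algebra functor (left adjoint of $\op{For}:{\tt\cO A}\to{\tt\cO M}$), write $\cD\otimes_\cO(-)$ for the induction functor (left adjoint of restriction of scalars ${\tt\cD M}\to{\tt\cO M}$ along $\cO\hookrightarrow\cD$), and recall the free commutative $\cD$-algebra functor $\cS$ (left adjoint of $\op{For}:{\tt\cD A}\to{\tt\cD M}$). For $B\in{\tt\cO A}$, let $\varepsilon_B:\cS_\cO B\to B$ be the counit, let $K=\ker\varepsilon_B$ be the ideal of relations, let $\rho:\cS_\cO B\to\op{For}\big(\cS(\cD\otimes_\cO B)\big)$ be the canonical $\cO$-algebra map extending $b\mapsto 1\otimes b$, and let $\cI$ be the $\cD$-ideal of $\cS(\cD\otimes_\cO B)$ generated by $\rho(K)$ (equivalently, the ordinary ideal generated by the $\cD$-submodule $\cD\cdot\rho(K)$, which is automatically $\cD$-stable because vector fields act as derivations). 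I would then define
$$\cJ^{\infty}(B):=\cS(\cD\otimes_\cO B)\big/\cI\;,$$
a commutative $\cD$-algebra, i.e.\ the free $\cD$-algebra on the generators of $B$ with the prolongations of the relations of $B$ imposed; in the geometric case this recovers $\cO(J^\infty E)$.

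To verify (\ref{JetAlg}) for $A\in{\tt\cD A}$, I would compose the three adjunctions. A ${\tt\cD A}$-morphism $g:\cS(\cD\otimes_\cO B)\to A$ corresponds, by the $\cS\dashv\op{For}$ adjunction followed by the induction adjunction, to an $\cO$-module map $\bar g$ from $B$ to the underlying $\cO$-module of $A$. Since $g$ is a $\cD$-algebra map, $g$ annihilates the $\cD$-ideal $\cI$ if and only if it annihilates its generating set $\rho(K)$; and $\op{For}(g)\circ\rho:\cS_\cO B\to\op{For}(A)$ is precisely the $\cO$-algebra map determined by $\bar g$, so $g(\rho(K))=0$ says exactly that this $\cO$-algebra map factors through $\varepsilon_B$, i.e.\ that $\bar g$ underlies an $\cO$-algebra morphism $B\to\op{For}(A)$. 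Thus $g$ descends to $\cJ^{\infty}(B)$ if and only if $\bar g$ is the module part of a ${\tt\cO A}$-morphism $B\to\op{For}(A)$, which yields a bijection
$$\h_{\tt\cD A}\big(\cJ^{\infty}(B),A\big)\simeq\h_{\tt\cO A}\big(B,\op{For}(A)\big)\;,$$
natural in $A$ since each of the three component bijections is.

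Functoriality of $\cJ^{\infty}$ in $B$, and naturality of (\ref{JetAlg}) in $B$, are then automatic: the assignment $B\mapsto(\cS_\cO B,\varepsilon_B,K)$ is functorial, and the $\cD$-ideal generated by $\rho(K)$ depends functorially on these data. Conceptually, this is the left-adjoint extension of the assignment $\cS_\cO N\mapsto\cS(\cD\otimes_\cO N)$ on free $\cO$-algebras (forced by the mate correspondence applied to the commuting square of forgetful functors) along the canonical presentation of $B$ as a reflexive coequalizer of free $\cO$-algebras; the quotient by $\cI$ is exactly the resulting coequalizer in ${\tt\cD A}$. I expect the main obstacle to be bookkeeping rather than conceptual: one must set up the notion of a $\cD$-ideal and check that the quotient inherits a bona fide commutative $\cD$-algebra structure (multiplication descends because $\cI$ is an ideal, the $\cD$-action descends because $\cI$ is $\cD$-stable, and $\cO$-bilinearity together with the derivation property of vector fields pass to the quotient), and one must carry the three adjunction bijections through the noncommutativity of $\cD$ — the essential point here being that restriction of scalars along $\cO\hookrightarrow\cD$ admits both a left adjoint (induction) and a right adjoint (coinduction), so ${\tt\cD M}$ is a well-behaved bicomplete category of left $\cD$-modules. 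As a slicker but less explicit alternative, since ${\tt\cD A}$ and ${\tt\cO A}$ are locally presentable (algebras for the accessible symmetric-algebra monads on the respective module categories) and $\op{For}$ preserves all limits and filtered colimits, the adjoint functor theorem for locally presentable categories produces $\cJ^{\infty}$ at once, at the cost of the explicit jet description.
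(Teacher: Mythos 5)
The paper itself offers no proof of this proposition: it is stated in Section 5.4 and the details are deferred to \cite{PP} (the construction is essentially Beilinson--Drinfeld's jet functor from \cite{BD}), so there is no in-paper argument to compare yours against. Judged on its own, your proof is correct and is the standard explicit construction. The chain of adjunctions you use is sound: $\cS(\cD\otimes_\cO(-))$ is left adjoint to the composite forgetful functor ${\tt\cD A}\to{\tt\cD M}\to{\tt\cO M}$, so a ${\tt\cD A}$-morphism $g:\cS(\cD\otimes_\cO B)\to A$ is the same as an $\cO$-linear map $\bar g:B\to\op{For}(A)$, and since $\ker g$ is automatically a $\cD$-ideal, $g$ descends to your quotient iff $g(\rho(K))=0$, iff the $\cO$-algebra map $\op{For}(g)\circ\rho$ (which agrees with the one induced by $\bar g$ on the free algebra $\cS_\cO B$) factors through the surjection $\varepsilon_B$, iff $\bar g$ is an algebra morphism. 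The one point that genuinely needs checking --- that the ordinary ideal generated by the $\cD$-submodule $\cD\cdot\rho(K)$ is $\cD$-stable --- you handle correctly: functions act $\cO$-linearly and vector fields act as derivations for the ($\cO$-bilinear) product, and these generate $\cD$. Your $\cJ^\infty(B)$ coincides with the usual presentation $\op{Sym}_\cO(\cD\otimes_\cO B)$ modulo the relations $1\otimes(b_1b_2)=(1\otimes b_1)(1\otimes b_2)$ and $1\otimes 1_B=1$, since these are exactly the images under $\rho$ of the ideal generators of $K=\ker\varepsilon_B$. The adjoint-functor-theorem alternative you mention also works but would be less useful here, because the rest of Section 5.4 relies on the explicit identification of $\cJ^\infty(\cO(E))$ with the algebraic counterpart of $\cO(J^\infty E)$, which only your concrete description provides.
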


Let now $\zp:E\to X$ be a smooth map of smooth affine algebraic varieties (or a smooth vector bundle). The function algebra $B=\cO(E)$ (in the vector bundle case, we only consider those smooth functions on $E$ that are polynomial along the fibers, i.e., $\cO(E):=\zG(\cS E^*)$) is canonically an $\cO$-algebra, so that the jet algebra ${\cal J}^{\infty}(\cO(E))$ is a $\cD$-algebra. The latter can be thought of as the $\cD$-algebraic counterpart of $\cO(J^\infty E)$. Just as we considered above a scalar {\small PDE} with unknown in $\zG(E)$ as a function $F\in\cO(J^\infty E)$ (see (\ref{AlgE})), an element $P\in{\cal J}^{\infty}(\cO(E))$ can be viewed as a polynomial {\small PDE} acting on sections of $\zp:E\to X$. Finally, the $\cD$-algebraic version of on-shell functions $\cO(\cE^\infty)=\cO(J^\infty E)/(F)$ is the quotient ${\cal R}(E,P):=\cJ^\infty(\cO(E))/(P)$ of the jet $\cD$-algebra by the $\cD$-ideal $(P)$.\medskip

A first candidate for a Koszul-Tate resolution of ${\cal R}:={\cal R}(E,P)\in\tt \cD A$ is of course the cofibrant replacement of ${\cal R}$ in $\tt DG\cD A$ given by the functorial `{\small Cof -- TrivFib}' factorization of Theorem \ref{P:c-tf_tc-f}, when applied to the canonical $\tt DG\cD A$-morphism $\cO\to {\cal R}$. Indeed, this decomposition implements a functorial cofibrant replacement functor $Q$ (see Theorem \ref{FCRF} below) with value $Q({\cal R})=\cS V$ described in Theorem \ref{P:c-tf_tc-f}: $$\cO\rightarrowtail \cS V\stackrel{\sim}{\twoheadrightarrow}{\cal R}\;.$$ Since ${\cal R}$ is concentrated in degree 0 and has 0 differential, it is clear that $H_k(\cS V)$ vanishes, except in degree 0 where it coincides with ${\cal R}$.\medskip

As already mentioned, we propose a general and precise definition of a Koszul-Tate resolution in \cite{PP}. Although such a definition does not seem to exist in the literature, it is commonly accepted that a Koszul-Tate resolution of the quotient of a commutative ring $k$ by an ideal $I$ is an $k$-algebra that resolves $k/I$.\medskip

The natural idea -- to get a ${\cal J}^\infty(\cO(E))$-algebra -- is to replace $\cS V$ by ${\cal J}^\infty(\cO(E))\0 \cS V$, and, more precisely, to consider the `{\small Cof -- TrivFib}' decomposition $${\cal J}^\infty(\cO(E))\rightarrowtail {\cal J}^\infty(\cO(E))\0 \cS V\stackrel{\sim}{\twoheadrightarrow}{\cal J}^\infty(\cO(E))/(P)\;.$$ The {\small DG$\cD$A} \be\label{KTD} {\cal J}^\infty(\cO(E))\0 \cS V\ee {\it is} a {\bf $\cJ^\infty(\cO(E))$-algebra} that {\bf resolves} ${\cal R}=\cJ^\infty(\cO(E))/(P)$, but it is of course {\it not} a cofibrant replacement, since the left algebra is not the initial object $\cO$ in $\tt DG\cD A$ (further, the considered factorization does not canonically induce a cofibrant replacement in $\tt DG\cD A$, since it can be shown that the morphism $\cO\to {\cal J}^\infty(\cO(E))$ is not a cofibration). However, as emphasized above, the Koszul-Tate problem requires a passage from $\tt DG\cD A$ to ${\cal J}^\infty(\cO(E))\downarrow \tt DG\cD A$. It is easily checked that, in the latter undercategory, ${\cal J}^\infty(\cO(E))\0 \cS V$ is a {\bf cofibrant replacement} of ${\cal J}^\infty(\cO(E))/(P)$. To further illuminate the $\cD$-algebraic approach to Koszul-Tate, let us mention why the complex (\ref{KT}) is of the same type as (\ref{KTD}). Just as the variables $\zf^{(k)}$ (see (\ref{PDE})) are algebraizations of the derivatives $d_t^k\zf$ of a section $\zf$ of a vector bundle $E\to X$ (fields), the generators $\zf^{\za*}_i$ and $C^{\zb*}_j$ (see (\ref{Eq}) and (\ref{NI})) symbolize the total derivatives $D_x^\za\zf^*_i$ and $D_x^\zb C^*_j$ of sections $\zf^*$ and $C^*$ of some vector bundles $\zp_\infty^*F_1\to J^\infty E$ and $\zp_\infty^*F_2\to J^\infty E$ (antifields). Hence, the $\zf^{\za*}_i$ and $C^{\zb*}_j$ can be thought of as the horizontal jet bundle coordinates of $\zp_\infty^*F_1$ and $\zp_\infty^*F_2\,$. These coordinates may of course be denoted by other symbols, e.g., by $\p_x^\za\cdot\zf_i^*$ and $\p_x^\zb\cdot C_j^*$, provided we define the $\cD$-action as the action $D_x^\za\zf^*_i$ and $D_x^\zb C^*_j$ by the corresponding horizontal lift, so that we get appropriate interpretations when the $\zf^*_i$-s and the $C^*_j$-s are the components of true sections. This convention allows to write $$\op{KT}=J\0\cS[\p_x^\za\cdot\zf_i^*,\p_x^\zb\cdot C_j^*]=J\0_\cO\cS_\cO(\oplus_i\,\cD\cdot\zf_i^*\;\oplus\; \oplus_j\,\cD\cdot C_j^*)\;,$$ where $J={\cal J}^\infty(\cO(E))\,,$ so that the space (\ref{KT}) is really of the type (\ref{KTD}). Let us emphasize that (\ref{KT}) and (\ref{KTD}), although of the same type, are of course not equal (for instance, the classical Koszul-Tate resolution is far from being functorial). For further details, see \cite{PP}.

\section{Appendix}

\subsection{Small object argument}\label{SOA}

The `{\small TrivCof -- Fib}' and `{\small Cof -- TrivFib}' factorizations of a cofibrantly generated model category can be constructed in a functorial way. The constructions use an {\it argument} that is based on the fact that the sources of the morphisms in $I$ and $J$ are {\it small objects} -- the so-called small object argument ({\small SOA}), which goes back to Quillen. Although this argument is described elsewhere in the literature, we provide a compact description that allows to compare our $\tt DG\cD A$-specific factorizations with the general {\small SOA}-factorizations.\medskip

In the following, $\tt C$ is just a category with all small colimits, $W$ is a set of $\tt C$-morphisms, whose sources are sequentially small, see \cite[Sections 8.5 and 8.6]{BPP1}. Our goal is to decompose any $\tt C$-morphism $f:A\to B$ as $A\stackrel{j}{\longrightarrow} C\stackrel{q}{\longrightarrow} B$, where $q\in\text{\small RLP}(W)$ (we will not show that this factorization leads to functorial `{\small TrivCof -- Fib}' and `{\small Cof -- TrivFib}' factorizations).\medskip

The intermediate object $C$ and the morphism $q$ will be constructed as the colimit of an $\zw$-sequence:
\be\label{Sequence}\begin{array}{ccccccccccccccccc}&A&\stackrel{j_0}{\longrightarrow}&&C_0&\stackrel{j_1}{\longrightarrow}&\ldots&\stackrel{j_n}{\longrightarrow}&& C_n&\stackrel{j_{n+1}}{\longrightarrow}&&C_{n+1}&\stackrel{j_{n+2}}{\longrightarrow}&\ldots&& C\\ f&\downarrow&&q_0&\downarrow&&&&q_n&\downarrow&&q_{n+1}&\downarrow&&&q&\downarrow\\&B&=&&B&=&&=&&B&=&&B&=&\ldots&&B\end{array}\ee
The construction starts with the first commutative square in the preceding diagram, where $(C_0,j_0,q_0)=(A,\id,f)\,.$ Assume now that the construction is done up to the commutative square $(C_n, j_n,q_n)$ inclusively, set as usual $j_{n0}=j_n\circ\ldots\circ j_0$, and memorize that $q_n\circ j_{n0}=f$.\medskip

Before constructing the commutative square $(C_{n+1},j_{n+1},q_{n+1})$, recall that we wish to get $q\in\text{\small RLP}(W)$, i.e., that any commutative square of $\tt C$-morphisms
\begin{center}
\begin{tikzpicture}
  \matrix (m) [matrix of math nodes, row sep=3em, column sep=3em]
    {  U & C  \\
       V & B  \\ };
 \path[->,dashed]
 (m-2-1) edge  node[right] {$\scriptstyle{\ell}$} (m-1-2);
 \path[->]
 (m-1-1) edge node[above] {$\scriptstyle{\zf}$} (m-1-2);
 \path[->]
 (m-1-1) edge  node[left] {$\scriptstyle{w}$} (m-2-1);
 \path[->]
 (m-1-2) edge  node[right] {$\scriptstyle{q}$} (m-2-2);
 \path[->]
 (m-2-1) edge  node[below] {$\scriptstyle{\psi}$}(m-2-2);
\end{tikzpicture}
\end{center}

\noindent with $w\in W$ must admit a lift $\ell$. In other words, we have to build the colimit $C$ in such a way that this lift does exist. Note now that, since $U$ is sequentially small, the morphism $\zf:U\to C=\colim_n C_n$ will factor through some stage of the colimit, i.e., that $\zf$ will be the composite of a morphism $\zf_n:U\to C_n$ and the transfinite composite $j_{\infty n}=\ldots\circ j_{n+2}\circ j_{n+1}:C_n\to C$:
\be\label{Sequence2}\begin{array}{cccccccccccc}&U&\stackrel{\zf_n}{\longrightarrow}&& C_n&\stackrel{j_{n+1}}{\longrightarrow}&&C_{n+1}&\stackrel{j_{n+2}}{\longrightarrow}&\ldots&& C\\ w&\downarrow&&q_n&\downarrow&&q_{n+1}&\downarrow&&&q&\downarrow\\&V&\stackrel{\psi}{\longrightarrow}&&B&=&&B&=&\ldots&&B\end{array}\ee
Therefore, we define the commutative square $(C_{n+1},j_{n+1},q_{n+1})$ as follows. Let $S$ be the set of all commutative squares

\begin{center}
\begin{tikzpicture}
  \matrix (m) [matrix of math nodes, row sep=3em, column sep=3em]
    {  U & C_n  \\
       V & B  \\ };
 \path[->]
 (m-1-1) edge (m-1-2);
 \path[->]
 (m-1-1) edge  node[left] {$\scriptstyle{w}$} (m-2-1);
 \path[->]
 (m-1-2) edge  node[right] {$\scriptstyle{q_n}$} (m-2-2);
 \path[->]
 (m-2-1) edge  (m-2-2);
\end{tikzpicture}
\end{center}

\noindent with $w\in W$. Due to universality of a coproduct, we then get a commutative square

\begin{center}
\begin{tikzpicture}
  \matrix (m) [matrix of math nodes, row sep=3em, column sep=3em]
    {  \coprod_SU & C_n  \\
       \coprod_SV & B  \\ };
 \path[->]
 (m-1-1) edge (m-1-2);
 \path[->]
 (m-1-1) edge  node[left] {$\scriptstyle{\coprod_Sw}$} (m-2-1);
 \path[->]
 (m-1-2) edge  node[right] {$\scriptstyle{q_n}$} (m-2-2);
 \path[->]
 (m-2-1) edge  (m-2-2);
\end{tikzpicture}
\end{center}

\noindent We now define $C_{n+1}$ to be the pushout of the upper and left arrows of the latter square, and obtain morphisms $j_{n+1}:C_n\to C_{n+1}$ and $\ell_{n+1}:\coprod_SV\to C_{n+1}$, and, in view of universality of a pushout, a morphism $q_{n+1}:C_{n+1}\to B$ such that, in particular, $q_{n+1}\circ j_{n+1}=q_n$, with the result that $q_{n+1}\circ j_{n+1,0}=q_n\circ j_{n0}=f$.\medskip

This leads to the commutative diagram (\ref{Sequence}). We take its colimit, i.e., we set $C=\op{colim}_nC_n$ and get $j_{\infty n}:C_n\to C$ and $j=j_{\infty n}\circ j_{n0}:A\to C$, as well as, from the universality of a colimit, $q:C\to B$ such that $q\circ j_{\infty n}=q_n$. Hence, the factorization $$f=q_n\circ j_{n0}=q\circ j_{\infty n}\circ j_{n0}=q\circ j\;.$$

To show that $q\in\text{\small RLP}(W)$, consider a commutative square $q\circ\zf=\psi\circ w$ as above. Since $\zf=j_{\infty n}\circ\zf_n$ and $q\circ j_{\infty n}=q_n$, it induces a commutative square $q_n\circ \zf_n=\psi\circ w$ as in Figure (\ref{Sequence2}), which is used to build the pushout $C_{n+1}$. Hence, a morphism $\ell_{n+1}:V\to C_{n+1}$ and a morphism $\ell=j_{\infty,n+1}\circ \ell_{n+1}:V\to C$. The latter is quite easily seen to be the searched lift.

\subsection{Proof of Theorem \ref{P:c-tf_tc-f}}\label{CofTrivFibProof}

The proof of functoriality of the decompositions will be given in Appendix \ref{FunctReplFunct}. Thus, only Part (2) requires immediate explanations. We use again the above-introduced notation $R_k=A\0\cS V_k$; we also set $R=A\0\cS V$. The multiplication in $R_k$ (resp., in $R$) will be denoted by $\diamond_k$ (resp., $\diamond$). \medskip

To show that $j$ is a minimal {\small RS$\cD\!$A}, we have to check that $A$ is a differential graded $\cD$-subalgebra of $R$, that the basis of $V$ is indexed by a well-ordered set, that $d_2$ is lowering, and that the minimality condition (\ref{minimal}) is satisfied.

The main idea to keep in mind is that $R=\bigcup_kR_k\,$ -- so that any element of $R$ belongs to some $R_k$ in the increasing sequence $R_0\subset R_1\subset \ldots\,$ -- and that the {\small DG$\cD$A} structure on $R$ is defined in the standard manner. For instance, the product of ${\frak a}\0 X,{\frak b}\0 Y\in R\cap R_k$ is defined by $$({\frak a}\0 X)\diamond ({\frak b}\0 Y)=({\frak a}\0 X)\diamond_k ({\frak b}\0 Y)=(-1)^{\tilde X\tilde{\frak b}}({\frak a}\ast{\frak b})\0 (X\odot Y)\;,$$ where `tilde' (resp., $\ast$) denotes as usual the degree (resp., the multiplication in $A$). It follows that $\diamond$ restricts on $A$ to $\ast\,$. Similarly, $d_2|_A=\zd_0|_A=d_A$, in view of (\ref{DefD2}) and (\ref{NewgTot}). Finally, we see that $A$ satisfies actually the mentioned subalgebra condition.

We now order the basis of $V$. First, we well-order, for any fixed generator degree $m\in\N$ (see (\ref{GenDeg})), the sets \be\label{OrdSetsGen}\{s^{-1}\mbi_{b_{m+1}}\},\, \{\mbi_{b_m}\},\, \{\mbi_{\zb_m}\},\, \{\mbi^1_{\zs_{m-1},\fb_{m}}\},\, \{\mbi^2_{\zs_{m-1},\fb_{m}}\},\, \ldots\ee of degree $m$ generators of a given type (for $m=0$, only the sets $\{s^{-1}\mbi_{b_1}\}$ and $\{\mbi_{\zb_0}\}$ are non-empty). We totally order the set of all degree $m$ generators by totally ordering its partition (\ref{OrdSetsGen}): $$\{s^{-1}\mbi_{b_{m+1}}\}<\{\mbi_{b_m}\}<\{\mbi_{\zb_m}\}<\{\mbi^1_{\zs_{m-1},\fb_{m}}\}<\{\mbi^2_{\zs_{m-1},\fb_{m}}\}<\ldots\;$$ A total order on the set of all generators (of all degrees) is now obtained by declaring that any generator of degree $m$ is smaller than any generator of degree $m+1$. This total order is a well-ordering, since no infinite descending sequence exists in the set of all generators. Observe that our well-order respects the degree (in the sense of (\ref{minimal})).

Finally, the differential $d_2$ sends the first and third types of generators (see (\ref{OrdSetsGen})) to 0 and it maps the second type to the first. Hence, so far $d_2$ is lowering. Further, we have $$d_2(\mbi^k_{\zs_{m-1},\fb_{m}})=\zs_{m-1}\in (R_{k-1})_{m-1}\;,$$ where $m-1$ refers to the term of degree $m-1$ in $R_{k-1}$. Since this term is generated by the generators $$\{s^{-1}\mbi_{b_{\ell+1}}\}, \{\mbi_{b_\ell}\},\{\mbi_{\zb_\ell}\},\{\mbi^1_{\zs_{\ell-1},\fb_{\ell}}\},\ldots, \{\mbi^{k-1}_{\zs_{\ell-1},\fb_{\ell}}\}\,,$$ where $\ell < m$, the differential $d_2$ is definitely lowering.\medskip

It remains to verify that the described construction yields a morphism $q:A\0\cS V\to B$ that is actually a trivial fibration.

Since fibrations are exactly the morphisms that are surjective in all positive degrees, and since $q|R_U=q_0|R_U=p$ is degree-wise surjective, it is clear that $q$ is a fibration. As for triviality, let $[\zb_n]\in H(B,d_B)$, $n\ge 0\,$. Since $\mbi_{\zb_n}\in \ker \zd_0\subset \ker d_2$, the homology class $[\mbi_{\zb_n}]\in H(R,d_2)$ makes sense; moreover, $$H(q)[\mbi_{\zb_n}]=[q\mbi_{\zb_n}]=[q_0\mbi_{\zb_n}]=[\zb_n]\;,$$ so that $H(q)$ is surjective. Eventually, let $[\zs_n]\in H(R,d_2)$ and assume that $H(q)[\zs_n]=0$, i.e., that $q\zs_n\in\im d_B$. Since there is a lowest $k\in\N$ such that $\zs_n\in R_k$, we have $\zs_n\in\ker\zd_k$ and $q_k\zs_n=d_B\fb_{n+1}$, for some $\fb_{n+1}\in B_{n+1}$. Hence, a pair $(\zs_n,\fb_{n+1})\in{\frak B}_k$ and a generator $\mbi^{k+1}_{\zs_n,\fb_{n+1}}\in R_{k+1}\subset R$. Since $$\zs_n=\zd_{k+1}\mbi^{k+1}_{\zs_n,\fb_{n+1}}=d_2\mbi^{k+1}_{\zs_n,\fb_{n+1}}\;,$$ we obtain that $[\zs_n]=0$ and that $H(q)$ is injective.

\subsection{Explicit fibrant and cofibrant functorial replacement functors}\label{FunctReplFunct}

(1) We proved already \cite[Theorem 4]{BPP1} that the factorization $(i,p)=(i(\zf),p(\zf))$ of the $\tt DG\cD A$-morphisms $\zf$, described in Theorem \ref{P:c-tf_tc-f}, is functorial, i.e., that, for any commutative $\tt DG\cD A$-square \be\label{InitSq1}
\xymatrix{
A\ar[d]^{u} \ar[r]^{\zf}&B\ar[d]^{v\;\;\;,}\\
A'\ar[r]^{\zf'}&B'\\
}
\ee
there is a commutative $\tt DG\cD A$-diagram
\be\label{CompMor0}
\xymatrix{A\;\; \ar[d]_{u} \ar^{\sim}_{i(\zf)}  @{>->} [r] & A\0\cS U \ar[d]^{w}\;\;\ar @{->>} [r]_{p(\zf)} & B \ar[d]^{v\;\;\;.}\\
A'\;\; \ar @{>->} [r]^{\sim}_{i(\zf')} & A'\0\cS U'\;\; \ar @{->>} [r]_{p(\zf')} & B'\\
}
\ee
The $\tt DG\cD A$-morphism $w$ is given by $w=u\0 \tilde{v}$, where $\tilde{v}$ is the $\tt DG\cD A$-morphism $\tilde{v}:\cS U\to \cS U'$ defined by $$\tilde{v}(s^{-1}\mbi_{b_n})=s^{-1}\mbi_{v(b_n)}\in\cS U'\;\,\text{and}\;\,\tilde{v}(\mbi_{b_n})=\mbi_{v(b_n)}\in\cS U'\;.$$

\begin{prop} In $\tt DG\cD A$, the functorial fibrant replacement functor $R$, which is induced by the functorial `{\small TrivCof -- Fib}' factorization $(i,p)$ of Theorem \ref{P:c-tf_tc-f}, is the identity functor: $R=\id$. In particular, all objects are fibrant. \end{prop}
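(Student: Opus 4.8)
The plan is to unwind the definition of the fibrant replacement functor and to feed the terminal object of $\tt DG\cD A$ into the `TrivCof -- Fib' factorization of Theorem~\ref{P:c-tf_tc-f}. Recall that, in a model category with terminal object $\star$, the fibrant replacement $R(A,d_A)$ of an object is by definition the middle term of a functorial factorization
$$(A,d_A)\stackrel{i}{\rightarrowtail} R(A,d_A)\stackrel{p}{\twoheadrightarrow}\star$$
of the unique morphism $(A,d_A)\to\star$; functoriality of $R$ is inherited from functoriality of the factorization, recorded in diagram (\ref{CompMor0}). So the first task is to identify $\star$ explicitly.

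First I would verify that the terminal object of $\tt DG\cD A$ is the zero algebra, whose underlying complex is $0$ (so that $1_\cO=0$ and the multiplication is trivial). Since $\tt DG\cD A$ is the category of commutative monoids in $\tt DG\cD M={\tt Ch}_+(\cD)$, whose terminal object is the zero complex, and since the zero complex carries a unique monoid structure into which any algebra morphism is the zero map (units must be preserved), this is immediate. In particular $\star_n=0$ for every $n$.

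Next I would apply part (1) of Theorem~\ref{P:c-tf_tc-f} to the morphism $\zf:(A,d_A)\to\star$, so that the target $B$ appearing there is now $\star$. The homogeneous basis of $U$ consists of the symbols $s^{-1}\mbi_{b_n},\mbi_{b_n}$ indexed by the elements $b_n\in B_n=\star_n$ with $n>0$; since $\star_n=0$, no such generator arises and $U=0$. Hence $\cS U=\cS 0=\cO$ and
$$R(A,d_A)=(A\0\cS U,d_1)=(A\0\cO,\,d_A\0\id)\simeq(A,d_A)\;,$$
with $i=\id$ and $p$ the unique map to $\star$. By functoriality this identification is natural in $(A,d_A)$, whence $R=\id$. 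The remaining assertion follows at once: each $R(A,d_A)=(A,d_A)$ is fibrant by construction, so every object is fibrant (equivalently, $(A,d_A)\to\star$ is surjective in every positive degree because $\star_{>0}=0$, hence a fibration).

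The one point that I expect to require care is the handling of the zero element in the indexing set: read literally, the clause ``over all $b_n\in B_n$'' would attach a contractible disc $\cD\cdot\mbi_0\oplus\cD\cdot s^{-1}\mbi_0$ to the single element $0\in\star_n$, which would only yield $R\simeq\id$ rather than $R=\id$. I would therefore make explicit that the generators are indexed by a generating set of the nonzero part of $B_n$, so that $\star_n=0$ contributes nothing and $U=0$ holds on the nose -- exactly what is needed for $R$ to be the identity functor and not merely naturally isomorphic to it.
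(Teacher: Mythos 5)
Your proposal is correct and follows essentially the same route as the paper: both apply the `TrivCof -- Fib' factorization of Theorem \ref{P:c-tf_tc-f} to the unique morphism $(A,d_A)\to\{0\}$ into the terminal object and observe that the module $U$ of new generators is trivial, so that the middle term is $A\0\cO\simeq A$. Your closing caveat about the element $0\in B_n$ is well taken -- the paper silently adopts the convention that it contributes no generators, and in any case its own proof only produces $R(A)=A\0_\cO\cO\simeq A$, i.e.\ a canonical natural isomorphism $R\simeq\id$ rather than a literal equality -- though note that excising the zero element from the index set in general would require declaring $\mbi_0:=0$ for the morphism $\tilde v$ of diagram (\ref{CompMor0}) to remain defined when $v(b_n)=0$.
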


\begin{proof} When applying the decomposition $(i,p)$ to the commutative square \be\label{InitSq1}
\xymatrix{
A\ar[d]^{u} \ar[r]^{z_A}&\{0\}\ar[d]^{0\;\;\;,}\\
A'\ar[r]^{z_{A'}}&\{0\}\\
}
\ee
we get \be\label{CompMor1}
\xymatrix{A\;\; \ar[d]_{u} \ar^{\sim}_{i_A}  @{>->} [r] & A\0\cO \ar[d]^{u\0 \id}\;\;\ar @{->>} [r]_{z_{A\0\cO}} & \{0\} \ar[d]^{0\;\;\;.}\\
A'\;\; \ar @{>->} [r]^{\sim}_{i_{A'}} & A'\0\cO\;\; \ar @{->>} [r]_{z_{A'\0\cO}} & \{0\}\\
}
\ee\smallskip

\noindent It follows that the functorial fibrant replacement functor $R$ maps $A$ (resp., $u$) to $R(A)=A\0_\cO \cO\simeq A$ (resp., $R(u)=u\0\id\simeq u\,$).
\end{proof}

(2) To finish the proof of Theorem \ref{P:c-tf_tc-f}, we still have to show that the factorization $(j,q)$ is functorial, i.e., that for any commutative $\tt DG\cD A$-square \be\label{InitSq1}
\xymatrix{
A\ar[d]^{u} \ar[r]^{\zf}&B\ar[d]^{v\;\;\;,}\\
A'\ar[r]^{\zf'}&B'\\
}
\ee
there is a commutative $\tt DG\cD A$-diagram
\be\label{CompMor2}
\xymatrix{A\;\; \ar[d]_{u} \ar_{j:=j(\zf)}  @{>->} [r] & A\0\cS V \ar[d]^{\zw}\;\;\ar @{->>} [r]^{\sim}_{q:=q(\zf)} & B \ar[d]^{v\;\;\;.}\\
A'\;\; \ar @{>->} [r]_{j':=j(\zf')} & A'\0\cS V'\;\; \ar @{->>} [r]^{\sim}_{q':=q(\zf')} & B'\\
}
\ee\smallskip

Let us stress that the following proof fails, if we use the non-functorial factorization mentioned in Remark \ref{NonFuncFact} (the critical spots are marked by $\triangleleft\,$).

Just as we constructed in Section \ref{Factorizations}, the {\small RS$\cD$A} $R=A\0\cS V$ (resp., $R'=A'\0\cS V'$) as the colimit of a sequence $R_k=A\0\cS V_k$ (resp., $R'_k=A'\0\cS V'_k$), we will build $\zw\in{\tt DG\cD A}(R,R')$ as the colimit of a sequence \be\label{w_k}\zw_k\in{\tt DG\cD A}(R_k,R'_k)\;.\ee Recall moreover that $q$ is the colimit of a sequence $q_k\in{\tt DG\cD A}(R_k,B)$, and that $j$ is nothing but $j_k\in{\tt DG\cD A}(A,R_k)$ viewed as valued in the supalgebra $R$ -- and similarly for $q',q'_k,j',j'_k$. Since we look for a morphism $\zw$ that makes the left and right squares of the diagram (\ref{CompMor2}) commutative, we will construct $\zw_k$ so that \be\label{ComSqu}\zw_k\,j_k=j'_k\,u\;\,\text{and}\;\; v\,q_k=q'_k\,\zw_k\;.\ee

Since the {\small RS$\cD$A} $A\to R_0=A\0\cS V_0$ is split, we define $$\zw_0\in{\tt DG\cD A}(A\0\cS V_0, R'_0)$$ as \be\label{zw_0}\zw_0=j'_0\,u\diamond_0 w_0\;,\ee where we denoted the multiplication in $R'_0\,$ by the same symbol $\diamond_0$ as the multiplication in $R_0$, where $j'_0\,u\in{\tt DG\cD A}(A,R'_0)$, and where $w_0\in{\tt DG\cD A}(\cS V_0, R'_0)$. As the differential $\zd_{V_0}$, see Section \ref{Factorizations}, has been obtained via \cite[Lemma 1]{BPP1}, the morphism $w_0$ can be built as described in \cite[Lemma 2]{BPP1}: we set \be\label{w_0}w_0(s^{-1}\mbi_{b_n})=s^{-1}\mbi_{v(b_n)}\in V'_0\,,\;w_0(\mbi_{b_n})=\mbi_{v(b_n)}\in V'_0\,,\;\, \text{and}\;\; w_0(\mbi_{\zb_n})=\mbi_{v(\zb_n)}\in V'_0\;,\ee and easily check that $w_0\,\zd_{V_0}=\zd'_0\, w_0$ on the generators. The first commutation condition (\ref{ComSqu}) is obviously satisfied. As for the verification of the second condition, let $t={\frak a}\0 x_1\odot\ldots\odot x_\ell\in A\0\cS V_0$ and remember (see (\ref{Morp})) that $q_0=\zf\star q_{V_0}$ and $q'_0=\zf'\star q_{V'_0}\,$, where we denoted again the multiplications in $B$ and $B'$ by the same symbol $\star$. Then $$vq_0(t)=v\zf({\frak a})\star vq_{V_0}(x_1)\star\ldots\star vq_{V_0}(x_\ell)$$ and $$q'_0\zw_0(t)=q'_0j'_0u({\frak a})\star q'_0w_0(x_1)\star\ldots\star q'_0w_0(x_\ell)=\zf'u({\frak a})\star q'_0w_0(x_1)\star\ldots\star q'_0w_0(x_\ell)\;.$$ It thus suffices to show that $v\,q_{V_0}=q'_0\,w_0$ on the generators $s^{-1}\mbi_{b_n}, \mbi_{b_n}, \mbi_{\zb_n}$ of $V_0$, what follows from Equations (\ref{Morpg}) and (\ref{w_0}) ($\triangleleft_1$).

Assume now that the $\zw_\ell$ have been constructed according to the requirements (\ref{w_k}) and (\ref{ComSqu}), for all $\ell\in\{0,\dots,k-1\}$, and build their extension $$\zw_k\in{\tt DG\cD A}(R_k,R'_k)$$ as follows. Since $\zw_{k-1}$, viewed as valued in $R'_k$, is a morphism $\zw_{k-1}\in{\tt DG\cD A}(R_{k-1},R'_k)$ and since the differential $\zd_k$ of $R_k\simeq R_{k-1}\0 \cS G_k$, where $G_k$ is the free $\cD$-module $$G_k=\langle \mbi^k_{\zs_n,\fb_{n+1}}:(\zs_n,\fb_{n+1})\in{\frak B}_{k-1}\ra\;,$$ has been defined by means of Lemma \ref{LemRSA}, the morphism $\zw_k$ is, in view of the same lemma, completely defined by degree $n+1$ values $$\zw_k(\mbi^k_{\zs_n,\fb_{n+1}})\in \zd'^{-1}_k(\zw_{k-1}\zd_k(\mbi^k_{\zs_n,\fb_{n+1}}))\;.$$ As the last condition reads $$\zd'_k\,\zw_k(\mbi^k_{\zs_n,\fb_{n+1}})=\zw_{k-1}(\zs_n)\;,$$ it is natural to set \be\label{w_kDef}\zw_k(\mbi^k_{\zs_n,\fb_{n+1}})=\mbi^k_{\zw_{k-1}(\zs_n),v(\fb_{n+1})}\;,\ee provided we have $$(\zw_{k-1}(\zs_n),v(\fb_{n+1}))\in{\frak B}'_{k-1}\quad (\triangleleft_2)\;.$$ This requirement means that $\zd'_{k-1}\zw_{k-1}(\zs_n)=0$ and that $q'_{k-1}\zw_{k-1}(\zs_n)=d_{B'}\,v(\fb_{n+1})$. To see that both conditions hold, it suffices to remember that $(\zs_n,\fb_{n+1})\in{\frak B}_{k-1}$, that $\zw_{k-1}$ commutes with the differentials, and that it satisfies the second equation (\ref{ComSqu}). Hence the searched morphism $\zw_k\in{\tt DG\cD A}(R_k,R'_k)$, such that $\zw_k|_{R_{k-1}}=\zw_{k-1}$ (where the {\small RHS} is viewed as valued in $R'_k$). To finish the construction of $\zw_k$, we must still verify that $\zw_k$ complies with (\ref{ComSqu}). The first commutation relation is clearly satisfied. For the second, we consider $$r_k=r_{k-1}\0 g_1\odot\ldots\odot g_\ell\in R_{k-1}\0\cS G_k\;$$ and proceed as above: recalling that $\zw_k$ and $q_k$ have been defined via Equation (\ref{DefRSAMorph}) in Lemma \ref{LemRSA}, that $q'_k$ and $v$ are algebra morphisms, and that $\zw_{k-1}$ satisfies (\ref{ComSqu}), we see that it suffices to check that $q'_k\,\zw_k=v\,q_k$ on the generators $\mbi^k_{\zs_n,\fb_{n+1}}$ -- what follows immediately from the definitions ($\triangleleft_3$).

Remember now that $((R,d_2),i_r)$ is the direct limit of the direct system $((R_k,\zd_k),\iota_{sr})$, i.e., that
 \begin{equation} \begin{tikzpicture}
 \matrix (m) [matrix of math nodes, row sep=3em, column sep=3em]
   {  R_0  & \cdots & R_k & \cdots  \\
       & R \\ };
 \path[>->]
 (m-1-1) edge node[right] {\small{$i_0$}} (m-2-2)
 (m-1-3) edge node[right] {\small{$i_k$}} (m-2-2)
 (m-1-3) edge node[above] {\small{$\iota_{k+1,k}$}} (m-1-4)
 (m-1-1) edge node[above] {\small{$\iota_{10}$}}(m-1-2)
 (m-1-2) edge node[above] {\small{$\iota_{k,k-1}$}}(m-1-3);
\end{tikzpicture}\;
\end{equation}
where all arrows are canonical inclusions, and that the same holds for $((R',d'_2),i'_r)$ and $((R'_k,\zd'_k),\iota'_{sr})$. Since the just defined morphisms $\zw_k$ provide morphisms $i'_k\,\zw_k\in{\tt DG\cD A}(R_k,R')$ (such that the required commutations hold -- as $\zw_k|_{R_0}=\zw_0$), it follows from universality that there is a unique morphism $\zw\in{\tt DG\cD A}(R,R')$, such that $\zw\,i_k=i'_k\,\zw_k\,$, i.e., such that \be\label{w}\zw|_{R_k}=\zw_k\;.\ee When using the last result, one easily concludes that $\zw\,j=j'\,u$ and $v\,q=q'\,\zw\,$.\medskip

This completes the proof of Theorem \ref{P:c-tf_tc-f}.\medskip

\begin{rem} The preceding proof of functoriality fails for the factorization of Remark \ref{NonFuncFact}. The latter adds only one new generator $\mbi_{\dot\zb_n}$ for each homology class $\dot\zb_n\simeq[\zb_n]$, and it adds only one new generator $\mbi^k_{\zs_n}$ for each $\zs_n\in {\cal B}_{k-1}\setminus {\cal B}_{k-2}\,$, where $${\cal B}_r=\{\zs_n\in\ker\zd_r:q_r\zs_n\in \im d_B,n\ge 0\}\;.$$ In $(\,$$\triangleleft_1$$\,)$, we then get that $v\,q_{V_0}(\mbi_{\dot\zb_n})$ and $q'_0\,w_0(\mbi_{\dot\zb_n})$ are homologous, but not necessarily equal. In $(\,$$\triangleleft_2$$\,)$, although $\zs_n\in\cB_{k-1}\setminus\cB_{k-2}$, its image $\zw_{k-1}(\zs_n)\in\cB'_{k-1}$ may also belong to $\cB'_{k-2}\,$. Eventually, in $(\,$$\triangleleft_3$$\,)$, we find that  $vq_k(\mbi^k_{\zs_n})$ and $q'_k\zw_k(\mbi^k_{\zs_n})$ differ by a cycle, but do not necessarily coincide.\end{rem}

The next result describes cofibrant replacements.

\begin{theo}\label{FCRF} In $\tt DG\cD A$, the functorial cofibrant replacement functor $Q$, which is induced by the functorial `{\small Cof -- TrivFib}' factorization $(j,q)$ described in Theorem \ref{P:c-tf_tc-f}, is defined on objects $B\in\tt DG\cD A$ by $Q(B)=\cS V_B$, see Theorem \ref{P:c-tf_tc-f} and set $A=\cO$, and on morphisms $v\in{\tt DG\cD A}(B,B')$ by $Q(v)=\zw$, see Equations (\ref{w}), (\ref{w_kDef}), and (\ref{w_0}), and set $\zw_0=w_0$. Moreover, the differential graded $\cD$-algebra $\cS{\cal V}_B$, see Proposition \ref{NonFuncFact} and set $A=\cO$, is a cofibrant replacement of $B$. \end{theo}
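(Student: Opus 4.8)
The plan is to recognize Theorem \ref{FCRF} as nothing more than the specialization, to the initial object of $\tt DG\cD A$, of the functorial `{\small Cof -- TrivFib}' factorization $(j,q)$ already established in Theorem \ref{P:c-tf_tc-f}. In any model category carrying a functorial factorization of morphisms into a cofibration followed by a trivial fibration, functorially factoring the unique map from the initial object into an object $B$ produces a functorial cofibrant replacement of $B$. Since the remark following the corollary to Proposition \ref{ModCoSlice} identifies $\cO$ as the initial object of $\tt DG\cD A$ (the adjunction $L_\0:{\tt DG\cD A}\rightleftarrows\cO\downarrow\tt DG\cD A:\op{For}$ being an isomorphism of categories), I would first apply $(j,q)$ to the canonical morphism $\cO\to B$.

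Setting $A=\cO$ in Theorem \ref{P:c-tf_tc-f} yields a factorization $\cO\stackrel{j}{\rightarrowtail}\cO\0\cS V_B\stackrel{q}{\twoheadrightarrow}B$, and since $\cO$ is the unit of $\0$ we have $\cO\0\cS V_B\simeq\cS V_B$; this is the object assignment $Q(B)=\cS V_B$. Because $j$ is a cofibration whose source is the initial object, $\cS V_B$ is cofibrant, and because $q$ is a trivial fibration it is in particular a weak equivalence. Thus $\cS V_B\stackrel{\sim}{\twoheadrightarrow}B$ exhibits $\cS V_B$ as a cofibrant replacement of $B$, which settles the object part. For the morphism assignment I would, given $v\in{\tt DG\cD A}(B,B')$, feed the functoriality diagram (\ref{CompMor2}) the commutative square with $u=\id_\cO$ and with top and bottom edges the canonical maps $\cO\to B$ and $\cO\to B'$ (the square commutes by initiality of $\cO$). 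The construction of Appendix \ref{FunctReplFunct} then delivers $\zw=Q(v):\cS V_B\to\cS V_{B'}$ as the colimit of the inductively defined $\zw_k$ of Equations (\ref{w_0}), (\ref{w_kDef}), (\ref{w}), with $\zw_0=w_0$ as asserted.

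To upgrade this assignment to a genuine functor I would verify the two functor axioms. The identity axiom $Q(\id_B)=\id$ is immediate from the generator-level formulas. For composition, $Q(v'\circ v)=Q(v')\circ Q(v)$, I would argue by stage-wise induction on $k$: the defining rule $\zw_k(\mbi^k_{\zs_n,\fb_{n+1}})=\mbi^k_{\zw_{k-1}(\zs_n),v(\fb_{n+1})}$ is manifestly natural in $v$, so that composing $\zw'_k$ with $\zw_k$ reproduces the rule attached to $v'\circ v$ on generators; since each $\zw_k$ is an algebra morphism pinned down by its values on generators, equality on generators propagates to all of $R_k$, and passing to the colimit gives the equality of $Q(v'\circ v)$ and $Q(v')\circ Q(v)$ on all of $\cS V_B$. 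This is the one place requiring genuine bookkeeping, and I expect it to be the main (though mild) obstacle; nothing new is constructed, the work being already carried out in Theorem \ref{P:c-tf_tc-f} and its appendix.

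Finally, for the `Moreover' clause I would observe that applying the non-functorial procedure of Remark \ref{NonFuncFact} to $\cO\to B$ again yields a cofibration $\cO\rightarrowtail\cS\cV_B$ together with a trivial fibration $\cS\cV_B\to B$: the surjectivity and homology computations of Appendix \ref{CofTrivFibProof} transfer verbatim, now adding only the generators $\mbi_{\dot\zb_n}$ indexed by homology classes $\dot\zb_n\simeq[\zb_n]$ and the generators indexed by $\cB_{k-1}\setminus\cB_{k-2}$. Hence $\cS\cV_B$ is likewise a cofibrant replacement of $B$, albeit not a functorial one, which completes the statement.
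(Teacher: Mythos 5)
Your proposal is correct and follows essentially the same route as the paper: the paper's proof likewise just applies the functorial factorization diagram (\ref{CompMor2}) to the square with $u=\id_\cO$ and horizontal arrows the canonical maps $I_B:\cO\to B$, $I_{B'}:\cO\to B'$, using that $(\cO,0)$ is the initial object of $\tt DG\cD A$. Your additional checks (the functor axioms for $Q$ and the explicit verification of the `Moreover' clause via Appendix \ref{CofTrivFibProof}) are sound elaborations of points the paper leaves implicit.
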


\begin{proof} Since the initial object in $\tt DG\cD A$ is $(\cO,0)$, it suffices to apply the afore-detailed construction of the commutative diagram (\ref{CompMor2}) to the commutative square \be\xymatrix{
\cO\ar[d]^{\id} \ar[r]^{I_B}&B\ar[d]^{v\;\;\;,}\\
\cO\ar[r]^{I_{B'}}&B'\\
}\ee
where $I_B$ is defined by $I_B(1_\cO)=1_B$, and similarly for $I_{B'}\,$.\end{proof}

\end{document}